\newcommand{\letters}{\renewcommand{\theenumi}{\alph{enumi}}}
\newcommand{\AND}{\operatorname{AND}}
\newcommand{\OR}{\operatorname{OR}}
\newcommand{\F}{{\mathbb F}}
\newcommand{\N}{{\mathbb N}}
\newcommand{\R}{{\mathbb R}}
\newcommand{\Z}{{\mathbb Z}}
\newcommand{\T}{{\mathbb T}}
\newcommand{\PP}{{\mathbb P}}
\newcommand{\B}{{\mathbb B}}
\newcommand{\Tr}{\operatorname{Tr}}
\newcommand{\Sym}{\operatorname{Sym}}
\newcommand{\dd}{\operatorname{d}}
\newtheorem{theorem}{Theorem}[section]
\newtheorem{proposition}[theorem]{Proposition}
\newtheorem{lemma}[theorem]{Lemma}
\newtheorem{corollary}[theorem]{Corollary}
\theoremstyle{definition}
\newtheorem{definition}[theorem]{Definition}
\newtheorem{example}[theorem]{Example}
\newcommand{\kar}{\operatorname{char}}
\newcommand{\conv}{\operatorname{conv}}
\newcommand{\slope}{\operatorname{slope}}
\title{Tropical invariants for permutation group actions}
\author{Harm Derksen}
\address{Northeastern University, Boston, USA}
\email{hderksen@northeastern.edu}
\begin{document}
\begin{abstract}
We consider the action of a permutation group $G$ of order $k$ on the tropical polynomial semiring in $n$ variables. We prove that the sub-semiring of invariant polynomials is finitely generated if and only if $G$ is generated by $2$-cycles. There do  exist finitely many separating invariants of degree at most $\max\{n,{n\choose 2}\}$.
Separating tropical invariants can be used to construct bi-Lipschitz embeddings of the orbit space $\R^n/G$ into Euclidean space.
We also show that the invariant polynomials of degree $\leq n p_1p_2\cdots p_k$ generate the semifield of invariant rational tropical functions, where $p_1,p_2,\dots,p_k$ are the first $k$ prime numbers. 
Most results are also true over arbitrary semirings that are additively idempotent and multiplicatively cancellative.
 
\end{abstract}

\maketitle

\tableofcontents

\section{Introduction}
\subsection{The tropical semiring}
The tropical semiring $\T$, also known as the max--plus algebra, is the
 set 
$\R\cup \{-\infty\}$ with
the two binary operations $\oplus$ and $\odot$ defined by $a\oplus b=\max\{a,b\}$ and $a\odot b=a+b$.
The semiring $\T$ is also isomorphic to the min--plus algebra.
This tropical semiring has been studied in many different areas of mathematics, computer science and physics.
The semiring $\T[x_1,x_2,\dots,x_n]$
of {\em tropical polynomials} consists of
all convex piecewise linear functions $\R^n\to \R$ that are obtained from $\R$ and the coordinate functions $x_1,x_2,\dots,x_n$ using the operations $\oplus$ and $\odot$, together with the element $-\infty$.
 Nonzero {\em tropical rational functions} are also piecewise linear functions from $\R^n$ to $\R$, but are not necessarily convex.
In tropical geometry one considers tropical polynomials and their properties to 
study combinatorical aspects in algebraic geometry (see~\cite{maclagan2015introduction}). Tropical geometry also appears in deep learning because deep neural networks with  ReLU activation functions are tropical rational functions (see~\cite{zhang2018tropical}). 

\subsection{Invariant theory}
Suppose that $\F$ is a field and $G$ is an algebraic group that acts on the polynomial ring $\F[x_1,x_2,\dots,x_n]$ by automorphisms. In Invariant Theory 
one studies the invariant ring $\F[x_1,x_2,\dots,x_n]^G$ that consists of all invariant polynomials. If $G$ is a reductive group then the invariant ring is finitely generated. 
In the case where $\F$ has  characteristic 0 this 
was shown by Hilbert~\cite{hilbert1890}, and in positive characteristic this follows from \cite{nagata1963} and \cite{haboush1975}. Emmy Noether showed that the invariant ring is generated by invariants of degree $\leq |G|$ if $\kar(\F)=0$, and this bound also holds when $\kar(\F)>0$ does not divide $|G|$
(see~\cite{fleischmann2000noether,fogarty2001noether}). A slightly weaker bound when $\kar(\F)$ divides $|G|$ was given in~\cite{symonds2011castelnuovo}. If $G$ acts by permuting the variables $x_1,x_2,\dots,x_n$, then the invariant ring is generated by polynomials of degree at most $\max\{n,{n\choose 2}\}$ (see~\cite{gobel1995computing}).

\subsection{Tropical polynomial invariants}
In this paper we explore Tropical Invariant Theory. We will consider 
a finite group $G$ that acts on the 
 tropical polynomial ring $\T[x_1,x_2,\dots,x_n]$ by permuting the variables. Not much is known in this new area. If $G$ is the full symmetric group $S_n$, then the invariant semiring is generated by the elementary symmetric polynomials~\cite[Corollary 3.12]{carlsson2016}.
 This is analogous to the case of the symmetric group $S_n$ acting on the polynomial  ring $R[x_1,x_2,\dots,x_n]$ over a commutative ring $R$ (with 1) in the non-tropical world.
 Unfortunately, invariant semirings are not always finitely generated in the tropical case. 
Consider the symmetric group $S_n$ acting on the tropical polynomial ring $R_{d,n}=\T[\{x^{(i)}_j\}_{1\leq i\leq d,1\leq j\leq n}]$ where $\sigma\cdot x^{(i)}_j=x^{(i)}_{\sigma(j)}$ for all $i,j$ and $\sigma\in S_n$. If $d=n=2$, then the invariant semiring $R_{2,2}^{S_2}$ is not finitely generated
~\cite[Proposition 5.9]{carlsson2016}. The following theorem is a special case of Corollary~\ref{cor:FinitelyGenerated} and Theorem~\ref{theo:NotFinitelyGenerated}.
 Our first main result is:
 \begin{theorem}\label{theo:1}
 If $G\subseteq S_n$ is a subgroup, then  
     the invariant semiring $\T[x_1,x_2,\dots,x_n]^G$ is finitely generated over $\T$ if and only if $G$ is generated by $2$-cycles.     
\end{theorem}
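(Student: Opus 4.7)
For the ``if'' direction, a subgroup $G \subseteq S_n$ generated by its 2-cycles is necessarily of the form $\Sym(B_1) \times \cdots \times \Sym(B_r)$, where $B_1 \sqcup \cdots \sqcup B_r = \{1,\ldots,n\}$ is the partition defined by the equivalence relation $i \sim j \Leftrightarrow (i\,j) \in G$. A tropical polynomial is then $G$-invariant if and only if it is separately invariant under each factor $\Sym(B_k)$ acting on the variables indexed by $B_k$, and so $\T[x_1,\ldots,x_n]^G$ is generated over $\T$ by the union of the individual invariant semirings $\T[x_j : j \in B_k]^{\Sym(B_k)}$. Each of these is finitely generated by tropical elementary symmetric polynomials \cite[Cor.~3.12]{carlsson2016}, giving a finite generating set.

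For the ``only if'' direction, assume $G$ is not generated by its 2-cycles, and let $H \trianglelefteq G$ be the (proper) normal subgroup generated by the transpositions in $G$. By the previous paragraph, $H = \prod_k \Sym(B_k)$ acts as the full symmetric group on each of its orbits $B_1,\ldots,B_r$ on $\{1,\ldots,n\}$. Any element $g \in G \setminus H$ must permute the blocks $\{B_k\}$ nontrivially, because an element fixing every block setwise already lies in $\prod_k \Sym(B_k) = H$. Fix such a $g$ and let $B_{i_1}, B_{i_2}, \ldots, B_{i_s}$ be a $\langle g\rangle$-cycle of blocks of length $s \geq 2$.

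The plan is now to exhibit an infinite family $\{f_d\}_{d \geq 1}$ of $G$-invariant polynomials with $\deg f_d \to \infty$ that are \emph{indecomposable}: no $f_d$ can be written as $p \oplus q$ or $p \odot q$ with both $p, q$ non-constant $G$-invariants. A concrete choice is $f_d = \bigoplus_{\sigma \in G} \sigma \cdot x^{\alpha_d}$, where $\alpha_d \in \N^n$ places exponent $d$ on one fixed variable in $B_{i_1}$ and exponent $1$ on one fixed variable in $B_{i_2}$ (with $0$'s elsewhere). Then: (a) the essential monomials of $f_d$ form a single $G$-orbit, which cannot be partitioned into two proper $G$-stable subsets, so any $\oplus$-decomposition into $G$-invariants is trivial; (b) any multiplicative factorization $f_d = p \odot q$ forces the Newton polytope of $p$ to be a single lattice point, and the componentwise minimum of $G \cdot \alpha_d$ being $0$ then forces $p$ to be a constant. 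Together (a) and (b) imply that $f_d$ is not in the sub-semiring generated by invariants of degree $< \deg f_d$, and since $\deg f_d \to \infty$ the invariant semiring cannot be finitely generated.

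The main obstacle is establishing the polytopal claim in (b) in full generality: that $\conv(G \cdot \alpha_d)$ admits no Minkowski decomposition into two lattice polytopes beyond trivial point summands. In the minimal Carlsson setup \cite[Prop.~5.9]{carlsson2016} with $G = \langle (1\,2)(3\,4)\rangle$ and $\alpha_d = (d,0,1,0)$ this is immediate from the lattice segment $[(d,0,1,0),(0,d,0,1)]$ having primitive direction vector $(-d,d,-1,1)$; but when $g$ cycles more than two blocks, or the blocks have size greater than one, the Newton polytope $\conv(G \cdot \alpha_d)$ becomes genuinely higher-dimensional and one must rule out nontrivial Minkowski summands using the $G$-symmetry of the orbit. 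Because $\T$ has no additive inverses, this polytopal rigidity is the only available route — the classical Noether-style linear algebra argument has no counterpart here.
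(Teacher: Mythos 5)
Your ``if'' direction is essentially the paper's: decompose $G=\Sym(B_1)\times\cdots\times\Sym(B_r)$, reduce to the full symmetric group on each block (Theorem~\ref{thm:Sninvariants}), and combine via the product lemma (Lemma~\ref{lem:productgroup}); this is exactly Corollary~\ref{cor:FinitelyGenerated}. The additive part of your ``only if'' argument (a single $G$-orbit of vertices cannot be split between two invariant summands, so $\Tr_G(x^{\alpha})$ must equal a single monomial in any proposed generators) also matches the paper.

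The gap is your step (b), and you have correctly identified it yourself: the claim that $\conv(G\cdot\alpha_d)$ has no nontrivial Minkowski decomposition is not only unproven but is in general \emph{false} for your choice of $\alpha_d$. If the blocks $B_{i_1},B_{i_2}$ have size $\geq 2$, say $B_{i_1}=\{1,2\}$ and $B_{i_2}=\{3,4\}$ with $\Sym(B_{i_1})\times\Sym(B_{i_2})\subseteq N\subseteq G$, then already the $N$-orbit of $\alpha_d=(d,0,1,0,\dots)$ is the product set $\{(d,0),(0,d)\}\times\{(1,0),(0,1)\}$ in those four coordinates, so its convex hull is the Minkowski sum of two $G$-stable lattice segments, and one gets genuine factorizations of the form $\Tr(x^{\beta})\odot\Tr(x^{\gamma})$ (compare Lemma~\ref{lem:ProductTransfers}, which is exactly a statement of this kind). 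A similar product structure can occur for the full $G$-orbit. So multiplicative indecomposability is the wrong invariant to chase, and, as you note, you have no proof of it.

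The paper's proof (Theorem~\ref{theo:NotFinitelyGenerated}) circumvents this entirely. After reducing to $R=\B$ via the Newton-polytope homomorphism $\Pi$ and Lemma~\ref{lem:surjectiveInvariants}, it \emph{allows} $\Tr_G(\{\alpha\})$ to be a nontrivial Minkowski sum $\beta_1P_1+\cdots+\beta_rP_r$ of the putative generators; the additive indecomposability (your step (a)) is all that is needed to reach this normal form. The contradiction then comes from edge directions rather than from indecomposability: every edge of a Minkowski sum is parallel to an edge of one of the summands, so all edge directions of all the polytopes $\Tr_G(\{\alpha\})$ would lie in the finite set $D$ of edge directions of $P_1,\dots,P_r$. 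Two further ingredients make this bite. First (Claim~1), for $\alpha$ with strictly decreasing coordinates the connectedness of the $1$-skeleton of $\Tr_G(\{\alpha\})$ forces an edge from $\alpha$ to $\sigma(\alpha)$ for some $\sigma\in G\setminus N$; your proposal never verifies that the pair $\{\alpha_d,g(\alpha_d)\}$ actually spans an edge of the orbit polytope. Second (Claim~2), a dimension count using $\dim\ker(1-\sigma)\leq n-2$ for $\sigma\in G\setminus N$ produces infinitely many $\alpha$ whose associated edge directions $\alpha-\sigma(\alpha)$ are pairwise linearly independent; a single two-parameter family like your $\alpha_d$ would not obviously suffice, since one must avoid finitely many proper subspaces that depend on the previously chosen exponents. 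With these two claims, infinitely many independent directions cannot fit into the finite set $D$, which is the contradiction. I recommend you replace step (b) by this edge-direction argument.
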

Note that a subgroup of $S_n$ that is generated by $2$-cycles is a product of symmetric groups. From Theorem~\ref{theo:NotFinitelyGenerated} and Corollary~\ref{cor:FinitelyGenerated} follows that this theorem is not only true over the tropical semifield $\T$, but over all semirings that are idempotent and cancellative (see Definition~\ref{def:idempotent_cancellative}).
\subsection{Tropical rational invariants}
     If $G$ acts on $\T[x_1,x_2,\dots,x_n]$ by permuting variables, then it also acts on the semifield of tropical rational functions $\T(x_1,x_2,\dots,x_n)$. 
 It is natural  to ask  whether the invariant semifield $\T(x_1,x_2,\dots,x_n)^G$ is finitely generated as a semifield over $\T$. One special case is the action of $S_n$ on the semifield $F_{d,n}=\T(\{x^{(i)}_{j}\}_{1\leq i\leq d,1\leq j\leq n})$. It was stated in  \cite[Theorem 6.2]{carlsson2016} that $F_{2,n}^G$ is a finitely generated semifield, but a mistake in the proof was pointed out in \cite[p.~85]{kubo2019}.
 Our second main result is:
 \begin{theorem}\label{theo:2}
     If a finite group $G$ of order $k$ acts on $\T(x_1,x_2,\dots,x_n)$ by permuting
     the variables, then $\T(x_1,x_2,\dots,x_n)^G$ is generated as a semifield by all invariant tropical polynomials of degree $\leq n p_1p_2\cdots p_k$ where $p_1,p_2,\dots,p_k$ are the first $k$ prime numbers.
 \end{theorem}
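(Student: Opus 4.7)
The plan is to prove Theorem~\ref{theo:2} by showing that every $G$-invariant tropical rational function $h$ lies in the sub-semifield (under $\oplus$, $\odot$, $\oslash$) generated by $G$-invariant tropical polynomials of degree at most $np_1\cdots p_k$.

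The first step is a reduction: every $G$-invariant tropical rational function is a tropical quotient of two $G$-invariant tropical polynomials (without, initially, any degree control). Given a presentation $h = f\oslash g$ with $f, g \in \T[x_1,\dots,x_n]$, I work in the additive picture ($\odot = +$, $\oslash = -$) and set
\[
Q := \sum_{\sigma \in G}\sigma(g), \qquad P := (f-g) + Q \;=\; f + \sum_{\sigma \ne e}\sigma(g).
\]
Both $P$ and $Q$ are sums of tropical polynomials, hence are themselves tropical polynomials. $Q$ is $G$-invariant by construction, and $P$ is $G$-invariant because $f-g = h$ is. Finally $P\oslash Q = h$, so $\T(x_1,\dots,x_n)^G$ is the fraction semifield of $\T[x_1,\dots,x_n]^G$. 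This only gives the crude bound $\deg P, \deg Q \leq k\,\max\{\deg f,\deg g\}$, which is unbounded without additional input, so it reduces the theorem to showing that every \emph{invariant polynomial} lies in the sub-semifield generated by invariant polynomials of degree $\leq np_1\cdots p_k$.

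Step 2, the heart of the argument, is to obtain the primorial degree bound. The role of the primes $p_1,\dots,p_k$ should be to inject the $k$ elements of $G$ into the integers via distinct prime weights, so that auxiliary invariants built from monomials $x^\alpha$ of degree $|\alpha|\leq n$ and the $k$ group translates remain of degree $\leq np_1\cdots p_k$ while carrying enough information to reconstruct everything. A likely family of generators consists of tropical analogues of elementary symmetric polynomials in the orbit members $\sigma(x^\alpha)$, together with prime-weighted variants of the form $\bigoplus_{S\subseteq G}\bigodot_{\sigma\in S}\sigma(x^\alpha)^{\odot\, p_{\pi(\sigma)}}$ (symmetrized so as to remain $G$-invariant). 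Unique factorization of the primorial then guarantees that these weighted invariants, combined under the semifield operations, separate orbits finely enough to recover any polynomial invariant, and in particular the $P$ and $Q$ produced by the averaging in Step~1.

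The main obstacle is precisely Step 2: showing that the sub-semifield generated by degree-$\leq np_1\cdots p_k$ invariants contains every invariant polynomial, and that no intermediate semifield expression needed along the way exceeds the bound. I expect the proof to proceed by induction on $k = |G|$, peeling off prime factors one at a time: for a subgroup $H\leq G$ of suitable index, apply the inductive hypothesis to $H$ with primes $p_1,\dots,p_{k-1}$, and then assemble $G$-invariants from $H$-invariants using the last prime $p_k$. The delicate point is that tropical $\min$ (realized as $\oslash$ applied to $\oplus$) is the only way to extract individual summands from a tropical max, and keeping the primorial-weighted degree invariant under such operations requires careful control. A secondary check is that the argument goes through for arbitrary (not necessarily transitive) $G\subseteq S_n$, which should reduce componentwise to the orbit decomposition of $\{1,\dots,n\}$ under $G$.
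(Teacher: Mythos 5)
Your Step 1 is fine and is exactly the paper's reduction: replacing $g$ by $u=\bigodot_{\sigma\in G}\sigma\cdot g$ shows $\T(x_1,\dots,x_n)^G=Q(\T[x_1,\dots,x_n]^G)$. But Step 2 is the entire content of the theorem, and you have not proved it — you have only conjectured a mechanism, and the mechanism you conjecture is not the right one. There is no induction on $|G|$ through a chain of subgroups (such a chain need not exist in any useful form, e.g.\ for simple $G$, and the primes in the bound are attached to the $k$ \emph{elements} of $G$, not to subgroups), and the generators are not prime-weighted elementary symmetric functions in orbit members. Your reading of where the factor $n$ enters is also off: the generating invariants are the orbit sums $\Tr_G(x^\alpha)$ for all $\alpha\in\N^n$ with $\|\alpha\|_\infty<N:=p_1p_2\cdots p_k$ (so individual exponents can be as large as $N-1$), and the degree bound $nN$ is just $\|\alpha\|_1\leq n\|\alpha\|_\infty$.

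The actual argument is an induction on $\|\beta\|_\infty$ showing that every $\Tr_G(x^\beta)$ lies in the semifield generated by the $\Tr_G(x^\alpha)$ with $\|\alpha\|_\infty<N$. Writing $G=\{\sigma_1,\dots,\sigma_k\}$ and given $\beta$ with $M=\|\beta\|_\infty\geq N$, the Chinese Remainder Theorem produces $\gamma\in\N^n$ with $\|\gamma\|_\infty<N$ and $\gamma+\sigma_i(\beta)\in p_i\Z^n$ for every $i$; setting $\delta_i=(\gamma+\sigma_i(\beta))/p_i$ one gets $\|\delta_i\|_\infty<(N+M)/2<M$, and the Frobenius identity $(a\oplus b)^m=a^m\oplus b^m$ (valid in convex semirings) yields
$$
\Tr_G(x^\beta)\odot\Tr_G(x^\gamma)=\bigoplus_{i=1}^{k}\Tr_G(x^{\delta_i})^{\odot p_i},
$$
so that $\Tr_G(x^\beta)$ is recovered by tropical division by $\Tr_G(x^\gamma)$, and all terms on the right fall under the induction hypothesis. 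This is the step your proposal is missing: without it (or a genuine substitute), nothing controls the degree of the invariant polynomials $P$ and $Q$ produced in your Step 1, and the theorem is not established. Your worry about "keeping the primorial-weighted degree invariant under $\min$" dissolves once one sees that a single division by a bounded-degree invariant suffices at each stage of the recursion.
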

 The proof of Theorem~\ref{theo:2} is in  Section~\ref{sec:Rational}.

\subsection{Bi-Lipschitz Invariant Theory and separating invariants}
Bi-Lipschitz Invariant Theory is a new direction in Invariant Theory about bi-Lipschitz embeddings of quotient spaces into Euclidean space~\cite{agarwal2020nearly,amir2025stability,balan2022lipschitz,balan2022lipschitz,balan2023g,blum2025estimating,cahill2025group,eriksson2018quantitative,haviv2013euclidean,heimendahl2022semidefinite,mixon2023max,mixon2025injectivity,qaddura2024max,vallentin2023least}. Many machine learning algorithms, such as a randomized approximate nearest neighbor search~\cite{jones2011randomized},
apply to data vectors that that live in a Euclidean space $\R^n$. If a model for the data has a group of symmetries $G$, then it usually is more efficient in time and space to exploit the symmetry and  work with data that lies in the quotient space $\R^n/G$. Given a bi-Lipschitz embedding $\R^n/G\hookrightarrow \R^m$ one can apply the machine learning algorithms in Euclidean space to data lying in a quotient space.

 Suppose that $G$ is a compact Lie group acting on the Euclidean space  $\R^n$ by orthogonal transformations. Let $\R^n/G=\{G\cdot v\mid v\in V\}$ be the orbit space.
The space $\R^n/G$ has a metric given by $$\dd(G\cdot v,G\cdot w)=\min_{g,h\in G}\|g\cdot v-h\cdot w\|=\min_{g\in G}\|g\cdot v-w\|.$$
A fundamental problem in bi-Lipschitz Invariant Theory is finding a bi-Lipschitz embedding $\phi:\R^n/G\hookrightarrow \R^m$. The bi-Lipschitz property means that there exist positive constants $C_1,C_2$ such that
$$
C_1\cdot \dd(G\cdot v,G\cdot w)\leq \|\phi(v)-\phi(w)\|\leq C_2\cdot  \dd(G\cdot v,G\cdot w).
$$
for all $v$ and $w$. If $C_1$ is chosen as large as possible and $C_2$ is chosen as small as possible then the ratio $C_2/C_1$ is called the distortion of the embedding. For applications one would like to find a bi-Lipschitz embedding with the least distortion.
As explained in \cite[\S1.2.1]{blum2025estimating}, $\R^n/G$ is a finite dimensional Alexandrov space of negative curvature, 
and using work of Zolotov~\cite{zolotov2019bi} that builds on~\cite{eriksson2018quantitative} there exists a bi-Lipschitz embedding $\R^n/G\hookrightarrow \R^m$. See also \cite[Appendix B]{blum2025estimating} for a short proof that uses less machinery. For a finite group $G$ a randomized construction in~\cite[Theorem~18]{cahill2025group} gives a bi-Lipschitz embedding of $\R^n/G\hookrightarrow \R^m$ with low distortion.

Suppose that $f_i:\R^n\to \R$ is a $G$-invariant function for $i=1,2,\dots,m$ and let $\phi=(f_1,f_2,\dots,f_m):\R^n\rightarrow \R^m$. We say that $f_1,f_2,\dots,f_m$ are {\em separating invariants} if $G\cdot v=G\cdot w$ if and only if $\phi(v)=\phi(w)$. 
The map $\phi:\R^n\to \R^m$ factors through $\overline{\phi}:\R^n/G\to \R^m$. 
Now $f_1,f_2,\dots,f_m$ are separating exactly when $\overline{\phi}$ is injective. If $\overline{\phi}$ is bi-Lipschitz, then $\overline{\phi}$ must be injective, but the converse is not always true. The {\em max filter} with template $z\in \R^n$ is defined as
$$
f(v)=\max_{g\in G}\langle v,g\cdot z\rangle.
$$
Suppose that $z_1,z_2,\dots,z_m\in \R^n$ and define the max filter $f_i:\R^n\to \R$ by $f_i(v)=\max_{g\in G} \langle v,g\cdot z_i\rangle$. Such a sequence $f_1,f_2,\dots,f_m$ is called a {\em max filter bank}.
For $m\geq 2n$ and $z_1,z_2,\dots,z_m$ chosen randomly, it was shown in \cite[Theorem 18]{cahill2025group} that, with positive probability, $f_1,f_2,\dots,f_m$ are separating and $\overline{\phi}$ is bi-Lipschitz. The following theorem will be proved in Section~\ref{sec:Separating}.

\begin{theorem}\label{theo:3}
    If $G$ acts on $\T[x_1,x_2,\dots,x_n]$ by permuting the variables, then there exists a nonnegative integer $m$ and separating tropical invariants $f_1,f_2,\dots,f_m\in \T[x_1,x_2,\dots,x_n]^G$ of degree $\leq \max\{n,{n\choose 2}\}$    
    such that $\phi=(f_1,f_2,\dots,f_m):\R^n\to \R^m$ factors through a bi-Lipschitz embedding $\R^n/G\hookrightarrow \R^m$, where $m=n+n!/|G|$. 
\end{theorem}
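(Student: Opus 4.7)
The plan is to construct $\phi$ as a concatenation of two batches of invariants. For the first $n$ coordinates I would take the tropical elementary symmetric polynomials
\[
e_k(v)=\max_{|I|=k}\sum_{i\in I}v_i,\qquad k=1,\dots,n,
\]
each of which is $S_n$- (hence $G$-) invariant of degree $k\le n$. For the remaining $r:=n!/|G|$ coordinates, fix the template $z=(0,1,\dots,n-1)\in\N^n$, pick right coset representatives $\tau_1,\dots,\tau_r$ of $G$ in $S_n$, and set
\[
\Psi_j(v)=\max_{g\in G}\langle v,g\tau_j z\rangle.
\]
Each $\Psi_j$ is a $G$-invariant max filter of degree $\binom{n}{2}$, so every coordinate of $\phi=(e_1,\dots,e_n,\Psi_1,\dots,\Psi_r)$ has degree at most $\max\{n,\binom{n}{2}\}$ and the target dimension is $m=n+r$.

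For the separating property I would argue in two stages. The elementary symmetric polynomials recover the sorted vector $v^\uparrow$ from $(e_1(v),\dots,e_n(v))$ via an invertible upper-triangular linear change of coordinates, so they already separate $S_n$-orbits and realize a bi-Lipschitz identification $\R^n/S_n\xrightarrow{\sim}\{y_1\le\dots\le y_n\}\subset\R^n$. Within a fixed $S_n$-orbit, the rearrangement inequality gives $\max_j\Psi_j(v)=\max_{\sigma\in S_n}\langle v,\sigma z\rangle$, attained (for $v$ with distinct entries) at the unique $\sigma=\pi^{-1}$ with $\pi v=v^\uparrow$; the index $j^\ast$ achieving this maximum therefore records the right coset $G\pi^{-1}$. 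Since $Gv=\pi^{-1}G\cdot v^\uparrow$ depends only on the left coset $\pi^{-1}G=(G\pi^{-1})^{-1}$, the pair $(v^\uparrow,j^\ast)$ is a complete $G$-invariant of $v$. The boundary case of $v$ with repeated entries follows by continuity of $\phi$ together with the observation that $Gv$ is the limit of $Gv_n$ for any approximating sequence of generic $v_n\to v$.

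The upper Lipschitz bound is routine: every coordinate of $\phi$ is a maximum of affine functions whose gradient vectors are nonnegative integer vectors of $\ell^1$-norm at most $\max\{n,\binom{n}{2}\}$, so $\phi$ is globally Lipschitz on $\R^n$ with a constant of order $n^2$, and the $G$-invariance descends this Lipschitz bound to the quotient metric on $\R^n/G$.

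The real work is the lower bi-Lipschitz estimate, and this is where I expect the main obstacle. The idea is to exploit the decomposition $\R^n=\bigcup_{\pi\in S_n}C_\pi$ into sorted cones $C_\pi=\{v:\pi v\text{ is sorted}\}$. On the interior of each cone every $e_k$ and every $\Psi_j$ unfolds as a single affine function, and the gradients of $e_1,\dots,e_n$ are the indicator vectors of the $\pi$-sorted top-$k$ sets, forming a triangular linear system of rank $n$. Thus $\phi|_{C_\pi}$ is affine of full rank, bi-Lipschitz onto its image with constants depending only on $n$. Using the positive homogeneity $\phi(tv)=t\phi(v)$ for $t\ge 0$, I would reduce the global estimate to a statement on a compact set modulo scaling and argue by contradiction: any failing sequence $(v_k,w_k)$ in $\R^n/G$ must either concentrate in a common cone (handled by the per-cone estimate) or straddle a boundary between adjacent cones $C_\pi,C_{\pi'}$ with $\pi G\ne\pi' G$. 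In the latter case the transposition $\sigma=\pi^{-1}\pi'$ lies outside $G$, so $\pi^{-1}$ and $\sigma\pi^{-1}$ belong to distinct right cosets of $G$; crossing the boundary then swaps the identity of the dominant $\Psi_{j^\ast}$, producing a gradient discontinuity in the $\Psi$-block of $\phi$ that contributes a jump of the correct order. Quantifying this jump uniformly over all boundary configurations, and showing that it always dominates the simultaneous degeneracy of the $e_k$-block near faces where several cones meet, is the step I expect to require the most care.
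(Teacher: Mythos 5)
Your choice of invariants is exactly the paper's: the elementary symmetric polynomials together with the $G$-transfers of the staircase monomial $x^{\sigma(\rho)}$, one per right coset, giving $m=n+n!/|G|$ and degrees $\le\max\{n,\binom n2\}$. Your separation argument is also the paper's rearrangement-inequality argument in slightly different clothing. One small improvement you should make there: you do not need the genericity/continuity detour for vectors with repeated entries. Equality in the rearrangement inequality $\sum_i (n-i)v_{\gamma(i)}\ge\sum_i(n-i)w_{\tau\gamma(i)}$, where $(w_{\tau\gamma(i)})_i$ is a rearrangement of the decreasingly sorted $(v_{\gamma(i)})_i$ and the weights $n-1>n-2>\cdots>0$ are \emph{strictly} decreasing, already forces $w_{\tau\gamma(i)}=v_{\gamma(i)}$ for every $i$, repeated entries or not; this is how Theorem~\ref{theo:SeparatingInvariants} closes the case directly. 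As stated, your continuity argument is not obviously sound: from $\phi(v)=\phi(w)$ with $v$ degenerate you cannot automatically produce generic approximating pairs $v_k\to v$, $w_k\to w$ with $\phi(v_k)=\phi(w_k)$.

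The genuine gap is the lower Lipschitz bound, and you have correctly identified where it sits but not closed it. The paper does not prove this bound from scratch either: it observes that every $f_\sigma$ is a max filter $v\mapsto\max_{g\in G}\langle v,g\cdot\alpha\rangle$ and then invokes \cite[Corollary 1.5]{balan2023g}, which says that a \emph{separating} max filter bank automatically induces a bi-Lipschitz embedding of $\R^n/G$. Your cone-decomposition sketch is essentially an attempt to reprove that cited theorem, and the step you flag as ``requiring the most care'' --- uniformly quantifying the gradient jump across cone walls and showing it dominates the degeneracy of the $e_k$-block where many cones meet --- is precisely the content of that theorem; the dangerous configurations are sequences $(v_k,w_k)$ with $\dd(G\cdot v_k,G\cdot w_k)\to 0$ converging to a point with nontrivial stabilizer, where a per-cone affine estimate plus homogeneity does not suffice. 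So either carry out that analysis in full (nontrivial) or cite the coorbit/max-filter result as the paper does. A secondary point worth checking in either route: $e_k$ is a max filter for $S_n$, not for $G$ (it is a maximum of $G$-max filters over cosets), so you should verify that the black-box theorem you invoke tolerates this, or replace each $e_k$ by the family $\Tr_G(x^{{\bf 1}_I})$ over $G$-orbits of $k$-subsets.
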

The theorem gives an explicit, non-randomized construction of a bi-Lipschitz embedding, but $m$ may be large. Note that any representation $G$ can be embedded into another representation on which $G$ acts by permuting the coordinates. So the problem of finding bi-Lipschitz embeddings can be reduced to the case where $G$ acts by permuting coordinates.


 \subsection{Generalizations to other semirings}
Instead of just working over the tropical semiring $\T=(\R\cup\{-\infty\},\oplus,\odot)$ we will also generalize our results to more general semirings such as for example the boolean semiring $\B=(\{{\bf 0},{\bf 1}\},\oplus ,\odot)$,
where $a\oplus b=a\OR b$ and $a\odot b=a\AND b$ for all $a,b\in \{{\bf 0},{\bf 1}\}$.

We will always assume that a semiring $(R,\oplus,\odot)$ is commutative with identity elements ${\bf 0}$ and ${\bf 1}$ for addition and multiplication respectively and that ${\bf 0}\neq {\bf 1}$. Note that ${\bf 0}=-\infty$ and ${\bf 1}=0$ in $\T$. 
We use the notations  
$\bigoplus_{i=1}^n a_i=a_1\oplus a_2\oplus \cdots\oplus a_n$, $\bigodot_{i=1}^n a_i=a_1\odot a_2\odot \cdots\odot a_n$
and $a^{\odot n}=\bigodot_{i=1}^n a$
for all $a,b,a_1,a_2,\dots,a_n\in R$ and nonnegative integers $n$. If there is no risk of ambiguity, we will use the abbreviations $ab$ for $a\odot b$ and $a^n$ for $a^{\odot n}$.

\begin{definition}\label{def:idempotent_cancellative}
A semiring $(R,\oplus ,\odot)$ is called (additively) {\em idempotent} if $a\oplus a=a$ for all $a\in R$. It is called (multiplicatively) {\em cancellative } if for all $a,b,c\in \R$ with $c\neq {\bf 0}$ and $ac=bc$ we have $a=b$. We will call a semiring $R$  {\em convex} if it is idempotent {\em and} cancellative.
\end{definition}
The semirings $\T$ and $\B$ are convex. In many ways, convex semirings behave like the tropical semiring $\T$. For example, convex semirings satisfy the Frobenius equality $(a\oplus b)^n=a^n\oplus b^n$ (or freshmen's dream) for all $a,b\in R$ and positive integers $n$ (see~\cite[proof of Lemma 4.3]{connes2011contemporary} or ~Lemma~\ref{lem:TropicalishProperties}). Many of the results in this paper will be generized to convex semirings. For these generalizations, we have to define the polynomial ring over a convex semirings. The tropical polynomial ring $\T[x_1,x_2,\dots,x_n]$ has already been defined as a set of certain convex functions. But note that this tropical polynomial ring is not just the set of formal polynomial expressions over $\T$, because there are non-trivial relations such as ${\bf 1}\oplus x\oplus x^2={\bf 1}\oplus x^2$ in $\T[x]$. Analagous to the tropical polynomial ring $\T[x_1,x_2,\dots,x_n]$ one
 could define the polynomial ring $R[x_1,x_2,\dots,x_n]$ as a set of functions $R^n\to R$. Although $x$ and $x^2$ represent the same function $\B\to \B$, we do not want the relation $x=x^2$ in $\B[x]$, just like the monomials $x$ and $x^2$ in the polynomial ring $\F_2[x]$ over the field $\F_2$ with 2 elements are not the same.

In Section~\ref{sec:tropical} we will construct a polynomial ring $R[x_1,x_2,\dots,x_n]$ for any convex semiring $(R,\oplus,\odot)$. The polynomial ring $R[x_1,x_2,\dots,x_n]$ itself will also be convex, and it will have the following universal property:\\[10pt]
{\it For every convex semiring $S$, homomorphism $\phi:R\to S$ and elements $b_1,b_2,\dots,b_n\in S$ there exists a unique homomorphism $\widehat{\phi}:R[x_1,x_2,\dots,x_n]\to S$ such that $\widehat{\phi}(a)=a$ for all $a\in R$ and $\widehat{\phi}(x_i)=b_i$ for all $i$.}\\[10pt]
The universal property defines $R[x_1,x_2,\dots,x_n]$ up to isomorphism. The tropical polynomial ring $\T[x_1,x_2,\dots,x_n]$ has already been defined and does have this universal property.

We will  show that Theorem~\ref{theo:1} and Theorem~\ref{theo:2} still hold if we replace $\T$ by any convex semiring $R$. It was shown in \cite{kalivsnik2019} that $R[x_1,x_2,\dots,x_n]^{S_n}$ is generated by the elementary symmetric functions for a large class of semirings $R$ that includes all convex semirings.

\section{Tropical algebra}\label{sec:tropical}

\subsection{The tropical semiring}
We will define the tropical semiring $\T$ as the max--plus algebra
$(\R\cup \{-\infty\},\oplus,\odot)$
where $a\oplus b=\max\{a,b\}$ and $a\odot b=a+b$ for all $a,b\in \R\cup\{-\infty\}$. The identity for the addition $\oplus$ is ${\bf 0}:=-\infty$ and the identity for the multiplication $\odot$ is ${\bf 1}:=0$.
Let ${\mathcal F}_n$ the set of all functions from $f:\R^n\to\R$ of the form
\begin{equation}\label{eq:AXB}
f(x_1,x_2,\dots,x_n)=\max\{\alpha_{i,1}x_1+\alpha_{i,2}x_2+\cdots+\alpha_{i,n}x_n+b_i\mid 1\leq i\leq m\},
\end{equation}
where $m$ is a positive integer,  $\alpha_{i,j}\in \N=\{0,1,2,\dots\}$ and $b_i\in \R$ for all $i$ and $j$.
 We define the tropical polynomial ring in $n$ variables as
$$
\T[x_1,x_2,\dots,x_n]=({\mathcal F}_n\cup \{-\infty\},\oplus,\odot)
$$
where $f\oplus g=\max\{f,g\}$ and $f\odot g=f+g$ for all $f,g\in {\mathcal F}_n\cup \{-\infty\}$. The function $f\in \T[x_1,x_2,\dots,x_n]$ in (\ref{eq:AXB}) has the tropical form
$$
f=\bigoplus_{i=1}^m (b_i) x_{1}^{\odot \alpha_{i,1}}\odot x_{2}^{\odot \alpha_{i,2}}\odot \cdots\odot x_n^{\odot \alpha_{i,n}},
$$

We have defined elements in $\T[x_1,x_2,\dots,x_n]$ as functions. This means that we have relations such as
$$
{\bf 1}\oplus x_1\oplus x_1^2=\max\{0,x_1,2x_1\}=\max\{0,2x_1\}={\bf 1}\oplus x_1^2,
$$
even though the polynomials on the left and right are not the same as formal polynomials. One can also define a polynomial ring over $\T$ whose elements are formal polynomial expression and we denote that ring by $\T\{x_1,x_2,\dots,x_n\}$.

\subsection{Idempotent semirings}
We will assume that semirings are commutative, have distinct identity elements for addition and for multiplication. We assume that a semiring $(R,\oplus ,\odot)$ satisfies the following axioms:
\begin{enumerate}
    \item the binary operation $\oplus$ is associative and commutative with identity element ${\bf 0}$;
    \item the binary operation $\odot$ is associative and commutative with identity element ${\bf 1}$;
    \item distributive law: $a (b\oplus c)=ab\oplus ac$ for all $a,b,c\in R$;
    \item ${\bf 0}\odot a={\bf 0}$ for all $a\in R$;
    \item ${\bf 0}\neq {\bf 1}$.
\end{enumerate}
A map $\phi:R\to S$ between semirings is a homomorphism if $\phi({\bf 0})={\bf 0}$, $\phi({\bf 1})={\bf 1}$ and for all $a,b\in R$ we have $\phi(a\oplus b)=\phi(a)\oplus\phi(b)$ and $\phi(ab)=\phi(a)\phi(b)$.

\begin{definition}
    A semiring $R$ is called (additively) {\em idempotent} if $a\oplus a=a$ for all $a\in R$.
\end{definition}
If ${\bf 1}\oplus {\bf 1}={\bf 1}$ in a semiring, then for all $a\in R$ we have $a=a\odot {\bf 1}=a\odot ({\bf 1}\oplus {\bf 1})=(a\odot {\bf 1})\oplus (a\odot {\bf 1})=a\oplus a$ and $R$ is idempotent.
An idempotent semiring is naturally equipped with a partial ordering:
\begin{definition}
For elements $a,b$ in an idempotent semiring $R$ we define $a\leq b$ if and only if $a\oplus b=b$.
\end{definition}
The properties in the following lemma are straightforward:
\begin{lemma}\label{lem:idempotentproperties}
 If $R$ is an idempotent semiring and $a,b,c\in R$, then we have
 \begin{enumerate}
 \letters
     \item $a\oplus b$ is the least upper bound for $a$ and $b$;
     \item if $a\leq b$ then $a\oplus c\leq b\oplus c$;
     \item if $a\leq b$ then $ac\leq bc$;
     \item if $a\leq {\bf 0}$ then $a={\bf 0}$.
 \end{enumerate}
\end{lemma}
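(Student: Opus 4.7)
The plan is to verify each of the four properties directly from the semiring axioms and the definition $a\leq b \iff a\oplus b = b$, using idempotence as the key extra ingredient.

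For part (a), I would first show that $a\oplus b$ is an upper bound: by associativity and idempotence, $a \oplus (a\oplus b) = (a\oplus a)\oplus b = a\oplus b$, so $a \leq a\oplus b$, and symmetrically $b \leq a\oplus b$. To see it is the least upper bound, suppose $a\leq c$ and $b\leq c$, so $a\oplus c = c$ and $b\oplus c = c$. Then by associativity, $(a\oplus b)\oplus c = a \oplus (b\oplus c) = a\oplus c = c$, giving $a\oplus b \leq c$.

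For part (b), assume $a\oplus b = b$. Using associativity, commutativity, and idempotence of $c$, I compute
\[
(a\oplus c) \oplus (b\oplus c) \;=\; (a\oplus b) \oplus (c\oplus c) \;=\; b\oplus c,
\]
so $a\oplus c \leq b\oplus c$. Part (c) is even quicker: if $a\oplus b = b$, then by the distributive law $(ac)\oplus (bc) = (a\oplus b)c = bc$, so $ac \leq bc$. Part (d) follows from the axiom ${\bf 0}\oplus a = a$: if $a\leq {\bf 0}$, then by definition $a\oplus {\bf 0} = {\bf 0}$, but the left-hand side equals $a$, hence $a={\bf 0}$.

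None of the steps present any serious obstacle; the only small subtlety is keeping track of which semiring axiom is being invoked at each step (associativity vs.\ commutativity vs.\ idempotence vs.\ distributivity vs.\ the additive identity property of ${\bf 0}$). The lemma is essentially a set of bookkeeping facts that make the relation $\leq$ on an idempotent semiring behave like a genuine partial order compatible with both operations, and the whole proof should fit in a few lines.
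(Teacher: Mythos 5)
Your proof is correct, and the paper itself omits the argument entirely, stating only that the properties are straightforward; your verification of each part directly from the axioms and the definition $a\leq b\iff a\oplus b=b$ is exactly the intended routine computation.
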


If $a,b\in R$ then we have $ab\leq a^2\oplus ab\oplus b^2=(a\oplus b)^2$. This property generalizes to the following lemma:
\begin{lemma}\label{lem:idempotentmonomialbound}
    Suppose that $R$ is an idempotent semiring, $c_1,c_2,\dots,c_n\in R$ and $k_1,k_2,\dots,k_n$ are nonnegative integers. Then we have
    $$
\bigodot_{i=1}^n c_i^{k_i}\leq \Big(\bigoplus_{i=1}^n c_i\Big)^{\sum_{i=1}^n k_i}.
$$
\end{lemma}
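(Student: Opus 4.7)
The plan is to reduce the stated inequality to repeated application of Lemma~\ref{lem:idempotentproperties}. First I would set $c := \bigoplus_{i=1}^n c_i$. By part (a) of that lemma, $c$ is an upper bound for each $c_i$, so $c_i \leq c$ for $i=1,\dots,n$. Having this common upper bound is the single observation that drives everything else.

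Next I would bootstrap from $c_i \leq c$ to the power inequality $c_i^{k_i} \leq c^{k_i}$ by a short induction on $k_i$. The base case $k_i=0$ is the trivial statement $\mathbf{1}\leq \mathbf{1}$, and for the inductive step I would apply part (c) of the lemma twice: from $c_i^{k_i}\leq c^{k_i}$, multiplying by $c_i$ gives $c_i^{k_i+1}\leq c_i\cdot c^{k_i}$, and multiplying $c_i\leq c$ by $c^{k_i}$ gives $c_i\cdot c^{k_i}\leq c^{k_i+1}$; transitivity (which follows formally from idempotence of $\oplus$) finishes the step.

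Finally I would combine these monomial bounds multiplicatively. A straightforward induction on $n$, again using part (c), shows that if $a_i\leq b_i$ for all $i$ then $\bigodot_{i=1}^n a_i \leq \bigodot_{i=1}^n b_i$. Applying this with $a_i=c_i^{k_i}$ and $b_i=c^{k_i}$, together with the fact that $\bigodot_{i=1}^n c^{k_i}=c^{\sum_i k_i}$, yields
$$
\bigodot_{i=1}^n c_i^{k_i}\leq \bigodot_{i=1}^n c^{k_i}=c^{\sum_{i=1}^n k_i}=\Big(\bigoplus_{i=1}^n c_i\Big)^{\sum_{i=1}^n k_i},
$$
as required. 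There is no genuine obstacle: the statement is a purely order-theoretic consequence of Lemma~\ref{lem:idempotentproperties}, and the only points needing light care are the $k_i=0$ case and the fact that the partial order $\leq$ is transitive (which one deduces from associativity and idempotence of $\oplus$ before invoking part (c)).
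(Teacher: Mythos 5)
Your proof is correct, but it follows a different route from the paper's. The paper argues in one line via distributivity: expanding $\bigl(\bigoplus_{i=1}^n c_i\bigr)^k$ with $k=\sum_i k_i$ produces the $\oplus$-sum of all degree-$k$ monomials in $c_1,\dots,c_n$, among which $\bigodot_{i=1}^n c_i^{k_i}$ occurs, and a summand is always $\leq$ the whole sum in an idempotent semiring (Lemma~\ref{lem:idempotentproperties}(a)). You instead never expand the power: you set $c=\bigoplus_i c_i$, note $c_i\leq c$, and propagate this through multiplication using Lemma~\ref{lem:idempotentproperties}(c) together with transitivity of $\leq$. Both arguments are sound and elementary; the paper's is shorter and leans on the distributive law plus ``summand $\leq$ sum,'' while yours isolates the order-theoretic content (monotonicity of $\odot$ and transitivity of the induced partial order) and handles the edge case $k_i=0$ explicitly, which the paper leaves implicit. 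Nothing is missing from your argument --- the transitivity check you flag is indeed needed and does follow from associativity and idempotence of $\oplus$ exactly as you say.
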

\begin{proof}
    Let $k=\sum_{i=1}^n k_i$. If we expand $(\bigoplus_{i=1}^n c_i)^k$ we get the sum of all monomials in $c_1,c_2,\dots,c_n$ of degree $k$.
    In particular, the monomial $\bigodot_{i=1}^n c_i^{k_i}$ appears, so $\bigodot_{i=1}^n c_i^{k_i}\leq (\bigoplus_{i=1}^n c_i)^k$.
\end{proof}

\subsection{Cancellative semirings}

\begin{definition}
    We say that a semiring $(R,\oplus,\odot)$ is a {\em weak domain} if $a\odot b={\bf 0}$ implies $a={\bf 0}$ or $b={\bf 0}$ for all $a,b\in R$.
    An element $a$ in a semiring $R$ is called (multiplicatively)
    {\em cancellative} if $a\odot b=a\odot c$ implies $b=c$ for all $b,c\in R$. The semiring $R$ is called {\em cancellative} if every nonzero element in $R$ is cancellative. The semiring $R$ is a {\em semifield} if every nonzero element in $R$ is invertible. If a semiring $R$ is idempotent {\em and} cancellative, we call it {\em convex}.
\end{definition}
It is easy to see that every semifield is cancellative and 
 every cancellative semiring is a weak domain. We use the term ``weak domain'' because some authors reserve the name ``domain'' for semirings with stronger properties. For example, in~\cite{bertram2017tropical} a semiring is an integral domain if it is cancellative. In~\cite{joo2018dimension} a semiring is a domain if it has the even stronger property that $a_1b_1\oplus a_2b_2=a_1b_2\oplus a_2b_1$ implies that $a_1=a_2$ or $b_1=b_2$.
 We use the term ``convex'' because of the property in Lemma~\ref{lem:convex}.

 If $R$ is a semifield, and $b\in R$ is nonzero, then the multiplicative inverse of $b$ will be denoted by $b^{-1}$. We also write $a\oslash b:=ab^{-1}$.
 The tropical semiring $\T$ and the boolean semiring $\B$ are idempotent semifields. For $n\geq 1$, the tropical polynomial semiring $\T[x_1,x_2,\dots,x_n]$ is convex but not a semifield.

Suppose that $R$ is a weak domain. We can construct a multiplicative cancellative semiring as follows. We define a relation $\sim$ on $R$ by $a\sim b$ if and only if there exists a nonzero $u\in R$ with $ua=ub$. 
It is easy to verify that $\sim$ is an equivalence relation. For $a\in R$ we denote its equivalence class by $[a]$. Let $R^\circ=\{[a]\mid a\in R\}$. We define addition and multiplication in $R^\circ$ by $[a]\oplus [b]=[a\oplus b]$
and $[a]\odot [b]=[a\odot b]$.
The addition and multiplication are well defined and make $R^\circ$ into a cancellative semiring. The identities for addition and multiplication are $[{\bf 0}]$ and $[{\bf 1}]$ respectively. If $a\in [{\bf 0}]$ then there exists a nonzero $u\in R$ with $u\odot a=u\odot {\bf 0}={\bf 0}$ and $a={\bf 0}$ because $R$ is a weak domain. This shows that $[{\bf 0}]=\{{\bf 0}\}$.
We define the quotient map $\pi:R\to R^\circ$ by $\pi(a)=[a]$. Then $\pi$ is a homomorphism of semirings. If $R$ is idempotent, then so is $R^\circ$. The homomorphism $\pi:R\to R^\circ$ has the following universal property:
\begin{lemma}\label{lem:universal}
    If $R$ is a weak domain, $S$ is a cancellative semiring and $\phi:R\to S$ is a homomorphism of semirings with $\phi^{-1}({\bf 0})=\{{\bf 0}\}$, then there exists a unique homomorphism $\overline{\phi}:R^\circ\to S$ such that 
    the diagram
$$
\xymatrix{
R\ar[r]^-\pi\ar[d]_-{\phi} & R^\circ \ar[ld]^-{\overline{\phi}}\\
S & }
$$
 commutes, i.e.,    
    $\overline{\phi}\circ \pi=\phi$.
\end{lemma}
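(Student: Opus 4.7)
The plan is to define $\overline{\phi}$ by the only formula compatible with the commutativity requirement, namely $\overline{\phi}([a]):=\phi(a)$, and then verify (i) well-definedness, (ii) homomorphism property, and (iii) uniqueness. Uniqueness is essentially automatic: any $\psi:R^\circ\to S$ with $\psi\circ\pi=\phi$ must satisfy $\psi([a])=\psi(\pi(a))=\phi(a)$, so $\psi=\overline{\phi}$. The homomorphism axioms for $\overline{\phi}$ are inherited from those of $\phi$ via the definitions $[a]\oplus[b]=[a\oplus b]$ and $[a]\odot[b]=[a\odot b]$, and the fact that $\pi({\bf 0})=[{\bf 0}]$, $\pi({\bf 1})=[{\bf 1}]$.

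The only step with real content is well-definedness, which is where I expect to spend the argument. Suppose $[a]=[b]$ in $R^\circ$, so by definition of $\sim$ there is a nonzero $u\in R$ with $ua=ub$. Applying the homomorphism $\phi$ gives $\phi(u)\phi(a)=\phi(u)\phi(b)$ in $S$. The hypothesis $\phi^{-1}({\bf 0})=\{{\bf 0}\}$ guarantees $\phi(u)\neq {\bf 0}$ in $S$, and then cancellativity of $S$ allows us to cancel $\phi(u)$, yielding $\phi(a)=\phi(b)$. This is exactly the place where both hypotheses on $\phi$ and $S$ are used, and it is the only potential obstacle; everything else is formal.

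Once well-definedness is established, the checks $\overline{\phi}([a]\oplus[b])=\overline{\phi}([a\oplus b])=\phi(a\oplus b)=\phi(a)\oplus\phi(b)=\overline{\phi}([a])\oplus \overline{\phi}([b])$ and the analogous one for $\odot$ are immediate, as are $\overline{\phi}([{\bf 0}])=\phi({\bf 0})={\bf 0}$ and $\overline{\phi}([{\bf 1}])=\phi({\bf 1})={\bf 1}$. Finally $\overline{\phi}(\pi(a))=\overline{\phi}([a])=\phi(a)$ shows the diagram commutes, completing the proof.
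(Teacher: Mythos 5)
Your proof is correct and follows essentially the same route as the paper's: define $\overline{\phi}([a])=\phi(a)$, check well-definedness by applying $\phi$ to $ua=ub$ and cancelling $\phi(u)\neq{\bf 0}$ in the cancellative semiring $S$, and note the remaining verifications are formal. No gaps.
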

\begin{proof}
    For $[a]\in R^\circ$ (and $a\in R$) we must define $\overline{\phi}([a])=\overline{\phi}(\pi(a))=\phi(a)$. The map $\overline{\phi}$ is well defined:
    If $a\sim b$ and there exists a nonzero element $u\in R$ with $ua=ub$. So we get $\phi(u)\phi(a)=\phi(ua)=\phi(ub)=\phi(u)\phi(b)$. Now $\phi(u)\neq {\bf 0}$ and $\phi(a)=\phi(b)$ because $S$ is an cancellative semiring.
   So there is a unique function $\overline{\phi}:R^\circ\to S$ with $\overline{\phi}\circ \pi=\phi$. It is easy to verify that $\overline{\phi}$ is a homomorphism of semirings.
    \end{proof}

Suppose that $(R,\oplus,\odot)$ is a cancellative semiring. We can construct its quotient semifield $Q(R)$ as follows. Let $S$ be the set of all formal expressions $a\oslash b$ with $a,b\in R$ and $b\neq {\bf 0}$. We define an relation $\equiv$ on $S$ by 
$a\oslash b\equiv c\oslash d$ if and only if $ad=bc$.
It is easy to verify that $\equiv$ is an equivalence relation. Let $[a\oslash b]$ be the equivalence class of an element $a\oslash b\in S$, and let $Q(R)$ be the set of all such equivalence classes. 
We define addition and multiplication in $Q(R)$ by: $[a\oslash b]\oplus [c\oslash d]=[(ad\oplus bc)\oslash (bd)]$ and $[a\oslash b]\cdot [c\oslash d]=[(ac)\oslash (bd)]$. The addition and multiplication are well-defined and make $Q(R)$ into a semifield. The identity elements of addition and multiplication are $[{\bf 0}\oslash {\bf 1}]$ and $[{\bf 1}\oslash {\bf 1}]$ respectively. We define $\iota:R\to Q(R)$ by $\iota(a)=[a\oslash {\bf 1}]$. Then $\iota$ is an injective homomorphism of semirings. So we may view $R$ as a sub-semiring of $Q(R)$. If $R$ is idempotent, then so is $Q(R)$.

\begin{lemma}
    Suppose that $R$ is a cancellative semiring, $L$ is a semifield and $\phi:R\to L$ is a homomorphism of semirings with $\phi^{-1}({\bf 0})=\{{\bf 0}\}$. Then there exists a unique homomorphism $\widehat{\phi}:Q(R)\to L$ with $\widehat{\phi}\circ \iota=\phi$.
\end{lemma}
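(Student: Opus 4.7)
The plan is to mimic the classical construction of the universal property of a field of fractions, with minor adjustments for the semiring setting. The formula for $\widehat{\phi}$ is essentially forced: if $\widehat{\phi}\circ\iota=\phi$ and $\widehat{\phi}$ is a homomorphism of semirings, then for every nonzero $b\in R$ we have
\[
\widehat{\phi}([b\oslash \mathbf{1}])\cdot \widehat{\phi}([\mathbf{1}\oslash b])=\widehat{\phi}([\mathbf{1}\oslash \mathbf{1}])=\mathbf{1},
\]
so $\widehat{\phi}([\mathbf{1}\oslash b])=\phi(b)^{-1}$ (which exists in $L$ because $\phi(b)\neq \mathbf{0}$ by the hypothesis $\phi^{-1}(\mathbf{0})=\{\mathbf{0}\}$ and the fact that $L$ is a semifield). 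Multiplicativity then forces $\widehat{\phi}([a\oslash b])=\phi(a)\phi(b)^{-1}$, which gives uniqueness for free.

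For existence, I would take this formula as the definition and verify four things in order. First, $\widehat{\phi}$ is well defined on equivalence classes: if $a\oslash b\equiv c\oslash d$, i.e., $ad=bc$, then applying $\phi$ gives $\phi(a)\phi(d)=\phi(b)\phi(c)$, and multiplying both sides by $\phi(b)^{-1}\phi(d)^{-1}$ in $L$ yields $\phi(a)\phi(b)^{-1}=\phi(c)\phi(d)^{-1}$. Second, $\widehat{\phi}$ respects multiplication: directly from the definitions of multiplication in $Q(R)$ and the commutativity of $L$. Third, $\widehat{\phi}$ respects addition: one computes
\[
\widehat{\phi}([a\oslash b]\oplus[c\oslash d])=\phi(ad\oplus bc)\phi(bd)^{-1}=(\phi(a)\phi(d)\oplus\phi(b)\phi(c))\phi(b)^{-1}\phi(d)^{-1},
\]
which expands (using distributivity in $L$) to $\phi(a)\phi(b)^{-1}\oplus\phi(c)\phi(d)^{-1}$. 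Fourth, the identities are preserved: $\widehat{\phi}([\mathbf{0}\oslash\mathbf{1}])=\phi(\mathbf{0})\phi(\mathbf{1})^{-1}=\mathbf{0}$ and $\widehat{\phi}([\mathbf{1}\oslash\mathbf{1}])=\mathbf{1}$. Finally, $\widehat{\phi}\circ\iota=\phi$ is immediate from $\iota(a)=[a\oslash\mathbf{1}]$.

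The only step requiring any care is the verification that addition is preserved, because it requires recognizing that $\phi(b)^{-1}$ and $\phi(d)^{-1}$ commute with the sums in $L$ (which holds since $L$ is commutative and distributivity holds). Everything else is a mechanical repetition of the ring-theoretic argument, and the hypothesis $\phi^{-1}(\mathbf{0})=\{\mathbf{0}\}$ is used exactly once—to ensure that $\phi(b)$ is invertible in $L$ whenever $b\neq \mathbf{0}$ in $R$.
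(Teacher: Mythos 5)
Your proposal is correct and follows essentially the same route as the paper: define $\widehat{\phi}([a\oslash b])=\phi(a)\oslash\phi(b)$, check well-definedness from $ad=bc$ using that $\phi(b),\phi(d)$ are nonzero (the only place the hypothesis $\phi^{-1}(\mathbf{0})=\{\mathbf{0}\}$ enters), and verify the homomorphism properties and uniqueness. You simply spell out the routine verifications that the paper leaves as ``easy to verify.''
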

\begin{proof}
    Define $\widehat{\phi}$ by $\widehat{\phi}([a\oslash b])=\phi(a)\oslash \phi(b)$
    for all $a,b\in R$ with $b\neq {\bf 0}$.
    We show that $\widehat{\phi}$ is well-defined. Suppose that $[a\oslash b]=[c\oslash d]$. Then we have $ad=bc$, and
    $\phi(a)\phi(d)=\phi(b)\phi(c)$ and $\phi(c)$ and $\phi(d)$ are nonzero. It follows that $\phi(a)\oslash \phi(b)=\phi(c)\oslash \phi(d)$. This shows that $\widehat{\phi}$ is well defined. It is easy to verify that $\widehat{\phi}$ is a homomorphism and it is clear that $\widehat{\phi}$ is unique.
\end{proof}

\subsection{Convex semirings}
Suppose that $R$ is a convex semiring.
\begin{lemma}\label{lem:TropicalishProperties}
For $a,b,c\in R$, $n\geq 1$ and $c\neq 0$ then we have
\begin{enumerate}\letters
\item $a\leq b$ $\Leftrightarrow$ $ac\leq bc$;
\item $(a\oplus b)^n=a^n\oplus a^{n-1}b\oplus a^{n-2}b^2\oplus \cdots\oplus b^n$;
        \item $a^n\leq b^n$ $\Leftrightarrow$ $a\leq b$;
        \item $a^n=b^n$ $\Leftrightarrow$ $a=b$;
        \item $(a\oplus b)^n=a^n\oplus b^n$.
    \end{enumerate}
\end{lemma}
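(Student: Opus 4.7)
The plan is to prove (a)--(e) in order, with part (c) carrying the essential work. For part (a), the forward implication is Lemma~\ref{lem:idempotentproperties}(c). Conversely, $ac \leq bc$ means $ac \oplus bc = bc$, which by distributivity reads $(a \oplus b)c = bc$; since $c \neq {\bf 0}$ and $R$ is cancellative, this cancels down to $a \oplus b = b$, i.e.\ $a \leq b$. For part (b) I would expand $(a \oplus b)^n$ by distributivity: each monomial $a^{n-k}b^k$ arises with a binomial multiplicity, and idempotency collapses each multiplicity to ${\bf 1}$, leaving the displayed formula.

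Part (c) is the heart of the argument, and only the forward direction requires work. If $b = {\bf 0}$, then $a^n \leq {\bf 0}$ forces $a^n = {\bf 0}$ by Lemma~\ref{lem:idempotentproperties}(d), whence $a = {\bf 0}$ because a cancellative semiring is a weak domain. For $b \neq {\bf 0}$ I would pass to the quotient semifield $Q(R)$ and set $t = a \oslash b$; part (a) converts the hypothesis to $t^n \leq {\bf 1}$. Now (b) expands
\[
({\bf 1} \oplus t)^n \;=\; \bigoplus_{k=0}^{n} t^k \;=\; ({\bf 1} \oplus t)^{n-1} \oplus t^n,
\]
and since $({\bf 1} \oplus t)^{n-1} \geq {\bf 1} \geq t^n$, the final summand is absorbed, giving $({\bf 1} \oplus t)^n = ({\bf 1} \oplus t)^{n-1}$. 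Because ${\bf 1} \oplus t \geq {\bf 1} \neq {\bf 0}$, the factor $({\bf 1} \oplus t)^{n-1}$ is nonzero and may be cancelled, yielding ${\bf 1} \oplus t = {\bf 1}$, i.e.\ $t \leq {\bf 1}$. Multiplying back by $b$ and using injectivity of $\iota : R \hookrightarrow Q(R)$ pulls $a \leq b$ back to $R$.

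Part (d) is just (c) applied to both $a^n \leq b^n$ and $b^n \leq a^n$ together with antisymmetry of $\leq$. For part (e), (b) reduces the claim to showing $a^{n-k}b^k \leq a^n \oplus b^n$ for each $0 \leq k \leq n$. Raising to the $n$-th power, $(a^{n-k}b^k)^n = a^{n(n-k)}b^{nk}$ is precisely the $j=k$ summand of $(a^n \oplus b^n)^n = \bigoplus_{j=0}^{n} a^{n(n-j)}b^{nj}$ (by (b) applied to $a^n$ and $b^n$); hence $(a^{n-k}b^k)^n \leq (a^n \oplus b^n)^n$, and (c) brings the inequality back down to the first power. The main obstacle is spotting the absorption identity in (c): once one rewrites $({\bf 1} \oplus t)^n$ as $({\bf 1} \oplus t)^{n-1} \oplus t^n$ and uses $t^n \leq {\bf 1}$ to kill the last term, cancellativity in $Q(R)$ closes the argument and parts (d) and (e) follow quickly.
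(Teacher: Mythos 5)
Your proposal is correct, and parts (a), (b), (d) match the paper's argument; the interesting differences are in (c) and (e). For (c) the paper stays inside $R$: from $a^n\leq b^n$ it derives the telescoping inequality $a(a\oplus b)^{n-1}\leq b(a\oplus b)^{n-1}$ and then cancels $a\oplus b$ repeatedly via part (a), treating $a\oplus b={\bf 0}$ as a degenerate case. You instead normalize in the quotient semifield $Q(R)$, set $t=a\oslash b$, and use the absorption identity $({\bf 1}\oplus t)^n=({\bf 1}\oplus t)^{n-1}\oplus t^n=({\bf 1}\oplus t)^{n-1}$ before cancelling. The two computations are essentially the same after dividing the paper's inequality by $b^n$, but your version imports the construction of $Q(R)$ (legitimate here, since the paper builds $Q(R)$ for any cancellative semiring and notes it inherits idempotency, so $Q(R)$ is convex and parts (a), (b) apply to it); the paper's version avoids that dependency at the cost of the separate $a\oplus b={\bf 0}$ case, which your split into $b={\bf 0}$ versus $b\neq{\bf 0}$ handles equally well. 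For (e) the paper uses the single identity $(a^n\oplus b^n)(a\oplus b)^n=(a\oplus b)^{2n}$ and cancels $(a\oplus b)^n$, whereas you bound each cross term $a^{n-k}b^k$ by raising it to the $n$-th power, locating it as a summand of $(a^n\oplus b^n)^n$, and pulling the inequality back down with (c); both are valid, with the paper's being slightly slicker and yours making the role of (c) more explicit.
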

\begin{proof}\ 

(a) Lemma~\ref{lem:idempotentproperties}~(c) shows one direction. If $ac\leq bc$ then $(a\oplus b)c=ac\oplus bc=bc$. By the cancellation property, $a\oplus b=b$ and $a\leq b$.

(b) This follows by induction.

(c) If $a\leq b$ then we have $a^n\leq b^n$ for all $n\geq 1$ by induction. Suppose that $a^n\leq b^n$ and $n\geq 1$. Then we have
$$
a(a\oplus b)^{n-1}=a^n\oplus a^{n-1}b\oplus a^{n-2}b^2\oplus \cdots\oplus ab^{n-1}\leq a^{n-1}b\oplus a^{n-2}b^2\oplus\cdots\oplus ab^{n-1}\oplus b^n=b(a\oplus b)^{n-1}.
$$
If $a\oplus b\neq 0$ then we have $a\leq b$ by repeatedly using part (a) where $c=a\oplus b$. If $a\oplus b={\bf 0}$ then we have $a=b={\bf 0}$ and therefore $a\leq b$.

(d) This follows from part (c).

(e) We have 
$$
(a^n\oplus b^n)(a\oplus b)^n=
(a^n\oplus b^n)(a^n\oplus a^{n-1}b\oplus \cdots\oplus b^n)=a^{2n}\oplus a^{2n-1}b\oplus \cdots\oplus b^{2n}=(a\oplus b)^{2n}.
$$
If $a\oplus b\neq {\bf 0}$ then we get $a^n\oplus b^n=(a\oplus b)^n$ by the cancellation property. If $a\oplus b={\bf 0}$ then $a=b={\bf 0}$ and $a^n\oplus b^n={\bf 0}=(a\oplus b)^n$.
\end{proof}

Suppose $c=(c_1,c_2,\dots,c_n)\in R^n$ and $\alpha=(\alpha_1,\alpha_2,\dots,\alpha_n)\in \N^n$. Then we write $c^{\alpha}$ for $c_1^{\alpha_1}c_2^{\alpha_2}\cdots c_n^{\alpha_n}$.

\begin{lemma}\label{lem:convex}
   Suppose $R$ is a convex semiring, $S\subseteq \N^n$ is a finite subset, $\beta\in \N^n$ and $c\in R^n$. If $\beta$ lies in the convex hull of $S$, then we have
   $$
   c^{\beta}\leq \bigoplus_{\alpha\in S}c^{\alpha}.
   $$
 \end{lemma}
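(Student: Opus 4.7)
The plan is to reduce the statement to Lemma~\ref{lem:idempotentmonomialbound} by the standard trick of clearing denominators in a rational convex combination, followed by an application of Lemma~\ref{lem:TropicalishProperties}(c) to extract $n$th roots.

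First I would use the hypothesis $\beta\in \conv(S)$ to write $\beta=\sum_{\alpha\in S}\lambda_\alpha\alpha$ with $\lambda_\alpha\geq 0$ and $\sum_\alpha\lambda_\alpha=1$. Since $S\subseteq \N^n$ and $\beta\in \N^n$, the polytope of admissible coefficient tuples $(\lambda_\alpha)_{\alpha\in S}$ is cut out by linear equations and inequalities with integer coefficients, hence has a rational vertex; therefore the $\lambda_\alpha$ can be taken to be rational. Clearing denominators, I obtain a positive integer $N$ and nonnegative integers $k_\alpha=N\lambda_\alpha$ with $\sum_{\alpha\in S}k_\alpha=N$ and $N\beta=\sum_{\alpha\in S}k_\alpha\,\alpha$ in $\N^n$.

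Next I would raise $c^\beta$ to the $N$th power, componentwise, to get
$$
(c^\beta)^N = c^{N\beta}=\bigodot_{\alpha\in S}(c^\alpha)^{k_\alpha}.
$$
Applying Lemma~\ref{lem:idempotentmonomialbound} with the elements $c^\alpha\in R$ (indexed by $\alpha\in S$) and exponents $k_\alpha$ whose sum equals $N$ then gives
$$
\bigodot_{\alpha\in S}(c^\alpha)^{k_\alpha}\ \leq\ \Bigl(\bigoplus_{\alpha\in S}c^\alpha\Bigr)^{N}.
$$
Combining these yields $(c^\beta)^N\leq \bigl(\bigoplus_{\alpha\in S}c^\alpha\bigr)^N$. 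Finally, since $R$ is convex, Lemma~\ref{lem:TropicalishProperties}(c) lets me cancel the $N$th power to conclude $c^\beta\leq \bigoplus_{\alpha\in S}c^\alpha$.

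The only non-mechanical step is the first one: one needs to know that a rational (indeed integral) point in the convex hull of a finite set of integral points is a \emph{rational} convex combination of those points. I do not anticipate this being a serious obstacle, since it is a standard fact about rational polytopes, but it is the sole place where the argument goes beyond directly invoking the lemmas established earlier in this section.
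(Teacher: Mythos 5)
Your proposal is correct and follows essentially the same route as the paper's own proof: both clear denominators in a rational convex combination to produce nonnegative integers $k_\alpha$ with $\big(\sum_\alpha k_\alpha\big)\beta=\sum_\alpha k_\alpha\alpha$, apply Lemma~\ref{lem:idempotentmonomialbound}, and cancel the power via Lemma~\ref{lem:TropicalishProperties}(c). The only difference is that you spell out the (standard) rationality argument that the paper leaves implicit.
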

\begin{proof}
    There exist nonnegative integers $k_\alpha$, $\alpha\in S$, not all $0$ such that
    $$
    \Big(\sum_{\alpha\in S}k_\alpha\Big)\beta=\sum_{\alpha\in S} k_\alpha \alpha.
    $$
Using Lemma~\ref{lem:idempotentmonomialbound} we obtain
$$
(c^{\beta})^{\sum_{\alpha\in S} k_\alpha}=\prod_{\alpha\in S} (c^{\alpha})^{k_\alpha}\leq \Big(\bigoplus_{\alpha\in S}c^\alpha\Big)^{\sum_{\alpha\in S} k_\alpha},
$$
so it follows that
$c^\beta\leq \bigoplus_{\alpha\in S} c^\alpha$ by Lemma~\ref{lem:TropicalishProperties}(c).
\end{proof}

\subsection{Polynomial semirings and power series semirings}
Suppose $R$ is a semiring. We will first construct the (formal) polynomial semiring and the (formal) polynomial power series semirings over $R$. The construction is analogous to the the construction of the polynomial ring and polynomial power series ring over a ring $R$. We will write $R\{x\}$ and $R\{\!\{x\}\!\}$ for the polynomial semiring and the polynomial power series semiring. The notations $R[x]$ and $R[[x]]$ will be reserved for a slightly different construction.
As a set, $R\{\!\{x\}\!\}$ consists of all 
formal expressions
$$
a(x)=\bigoplus_{k=0}^\infty a_k x^k
$$
where $a_0,a_1,a_2,\dots\in R$. 
If $b(x)=\bigoplus_{k=0}^\infty b_kx^k$
then we define $a(x)\oplus b(x)=\bigoplus _{k=0}^\infty (a_k \oplus b_k)x^k$ and $a(x)\odot b(x)=\bigoplus_{k=0}^\infty c_kx^k$ where
$c_k=\bigoplus_{i=0}^k a_ib_{k-i}$.
Let $R\{x\}\subseteq R\{\!\{x\}\!\}$ be the set of all $a(x)=\bigoplus_{k=0}^\infty a_kx^k$ with
$a_k=0$ for $k\gg 0$.
One can check that  $R\{\!\{x\}\!\}$ 
and $R\{x\}$ are again semirings. 
If $R$ is idempotent, then so are $R\{x\}$ and $R\{\!\{x\}\!\}$. However, if $R$ is cancellative then $R\{x\}$ and $R\{\!\{x\}\!\}$ do not need to be cancellative. 
\begin{example}
In $\B\{x\}\subseteq \B\{\!\{x\}\!\}$ we have
$$
1\oplus x^2\neq 1\oplus x\oplus x^2,\quad\mbox{but}\quad
(1\oplus x)\odot (1\oplus x^2)=1\oplus x\oplus x^2\oplus x^3=(1\oplus x)\odot (1\oplus x\oplus x^2).
$$
This shows that $\B\{x\}$ and $\B\{\!\{x\}\!\}$ are not cancellative, even though $\B$ is.
\end{example}
Inductively, we define a polynomial semiring in $n$ variables by 
$$R\{x_1,x_2,\dots,x_n\}=R\{x_1,x_2,\dots,x_{n-1}\}\{x_n\}.$$
The polynomial semiring has the following universal property:\\[10pt]
{\it If $\phi:R\to S$ is a homomorphism of semirings, and $b_1,b_2,\dots,b_k\in S$, then there exists a unique homomorphism $\widetilde{\phi}:R\{x_1,x_2,\dots,x_n\}$ with $\widetilde{\phi}(a)=a$ for all $a\in R$ and $\widetilde{\phi}(x_i)=b_i$ for all $i$.}
\begin{theorem}\label{theo:polyringiso}
    The semirings $\T[x_1,x_2,\dots,x_n]$ and $\T\{x_1,x_2,\dots,x_n\}^\circ$ are isomorphic.
\end{theorem}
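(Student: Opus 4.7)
The plan is to produce the natural surjection $\phi : \T\{x_1, \dots, x_n\} \to \T[x_1, \dots, x_n]$ from the universal property of the formal polynomial semiring and then show it descends to an isomorphism $\overline{\phi} : \T\{x_1, \dots, x_n\}^\circ \to \T[x_1, \dots, x_n]$. Concretely, $\phi$ sends $\bigoplus_\alpha a_\alpha x^\alpha$ to the function $y \mapsto \max_\alpha(a_\alpha + \alpha \cdot y)$, and it is surjective by the very definition of $\T[x_1, \dots, x_n]$. I would then verify that $\T[x_1, \dots, x_n]$ is convex (idempotence is built into $\max$; cancellativity holds because any nonzero tropical polynomial takes finite values everywhere, so the equation $f \odot g = f \odot h$ with $f \neq {\bf 0}$ can be subtracted pointwise), that $\phi^{-1}({\bf 0}) = \{{\bf 0}\}$, and that $\T\{x_1, \dots, x_n\}$ is a weak domain (by a standard leading-monomial argument). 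Lemma~\ref{lem:universal} then delivers the homomorphism $\overline{\phi}$, which is automatically surjective.

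The remaining and main task is to prove $\overline{\phi}$ is injective. Write $R^\circ = \T\{x_1, \dots, x_n\}^\circ$ and suppose $f, g \in \T\{x_1,\dots,x_n\}$ satisfy $\phi(f) = \phi(g)$. The whole argument rests on the following key lemma: if $b x^\beta$ is a single monomial with $b + \beta \cdot y \leq \phi(f)(y)$ for all $y \in \R^n$, then $b x^\beta \leq f$ in $R^\circ$. Given the key lemma, every monomial of $g$ satisfies the hypothesis for $f$ (since $\phi(g) = \phi(f)$), so $g \leq f$ in $R^\circ$; by symmetry $f \leq g$, and hence $f = g$ in $R^\circ$.

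To prove the key lemma, I will use LP duality. The hypothesis on $b x^\beta$ says exactly that $(\beta, b)$ lies in the upper convex hull of the lifted support $\{(\alpha, a_\alpha) : \alpha \in \operatorname{supp}(f)\}$, which by LP duality yields rationals $\lambda_\alpha \geq 0$ with $\sum \lambda_\alpha = 1$, $\sum \lambda_\alpha \alpha = \beta$, and $\sum \lambda_\alpha a_\alpha \geq b$. Clearing denominators produces a positive integer $L$ and nonnegative integers $k_\alpha$ with $\sum k_\alpha = L$, $\sum k_\alpha \alpha = L\beta$, and $\sum k_\alpha a_\alpha \geq Lb$. Reading $L\beta = \sum k_\alpha \alpha$ as a length-$L$ multiset decomposition then shows that the coefficient of $x^{L\beta}$ in the formal polynomial $f^L$ is at least $Lb$, so $(bx^\beta)^L \leq f^L$ already as formal polynomials in $\T\{x_1, \dots, x_n\}$. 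Since $R^\circ$ is convex, Lemma~\ref{lem:TropicalishProperties}(c) upgrades this to $b x^\beta \leq f$ in $R^\circ$, proving the key lemma. I expect the LP-duality step combined with rational-clearing to be the main obstacle; the rest is essentially bookkeeping.
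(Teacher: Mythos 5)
Your proposal is correct and follows essentially the same route as the paper: construct $\phi$ and $\overline{\phi}$ via the universal properties, then prove injectivity by showing each monomial of $g$ is dominated by $f$ using LP duality, clearing denominators to integers $k_\alpha$, deducing $(bx^\beta)^{\odot L}\leq f^{\odot L}$ in the formal semiring, and dividing out the power via Lemma~\ref{lem:TropicalishProperties}(c) in the cancellative quotient. The only (cosmetic) difference is that you treat one monomial at a time and then take the join, whereas the paper aggregates all monomials of $g$ with a single common exponent $k$ using the Frobenius identity; your version slightly streamlines that step.
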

\begin{proof}
    By the universal property of $\T\{x_1,x_2,\dots,x_n\}$ there exists a unique homomorphism $\phi:\T\{x_1,x_2,\dots,x_n\}\to \T[x_1,x_2,\dots,x_n]$
    with $\phi(a)=a$ for all $a\in \T$ and $\phi(x_i)=x_i$ for all $i$. By Lemma~\ref{lem:universal} there exist a unique homomorphism $\overline{\phi}:\T\{x_1,x_2,\dots,x_n\}^\circ\to \T[x_1,x_2,\dots,x_n]$ such that $\overline{\phi}\circ \pi=\phi$, i.e., we have the following commutative diagram
    $$
    \xymatrix{
    \T\{x_1,\dots,x_n\}\ar[r]^\pi\ar[rd]_\phi & T\{x_1,\dots,x_n\}^\circ\ar[d]^{\overline{\phi}}\\
    & T[x_1,\dots,x_n]}
    $$

    It is clear that $\phi,\overline{\phi},\pi$ are all surjective. We will show that $\overline{\phi}$ is also injective. Suppose that $\overline{\phi}(\pi(f))=\phi(f)=\phi(g)=\overline{\phi}(\pi(g))$ for some $f,g\in \T\{x_1,x_2,\dots,x_n\}$. We can write
    $$
    f=\bigoplus_{\alpha \in S} c_\alpha  x^\alpha,\quad
    g=\bigoplus_{\beta\in T}d_\beta  x^\beta$$
    with $S,T\subseteq \N^n$ and 
     $c_\alpha$ and $d_\beta$ are not equal to ${\bf 0}=-\infty$ for all $\alpha$ and $\beta$. 
     As functions from $\R^n$ to $\R$, we have
\begin{multline*}
\max\{\alpha_{1}x_1+\alpha_2x_2+\cdots+\alpha_nx_n+c_\alpha\mid\alpha\in S\}=\phi(f)(x_1,x_2,\dots,x_n)=\\
=\phi(g)(x_1,x_2,\dots,x_n)
=\max\{\beta_{1}x_1+\beta_2 x_2+\cdots+\beta_n x_n+d_\beta\mid \beta\in T\}.
\end{multline*}
So we have
\begin{equation}\label{eq:betaalpha}
\beta_1x_1+\beta_2x_2+\cdots+\beta_nx_n+d_\beta\leq \max\{\alpha_{1}x_1+\alpha_2x_2+\cdots+\alpha_nx_n+c_\alpha\mid \alpha\in S\}
\end{equation}
for all $x_1,x_2,\dots,x_n\in \R$.
Consider the following linear program: \\[10pt]
maximize 
$x_0+\beta_1x_1+\beta_2x_2+\cdots+\beta_n x_n$\\ under the constraints
$x_0+\alpha_1x_1+\alpha_2x_2+\cdots+\alpha_nx_n\leq -c_\alpha$ for all $\alpha \in S$.\\[10pt] 
For the optimal solution $(x_0,x_1,x_2,\dots,x_n)$
there exists an $\alpha\in S$ such that
$$
x_0+\beta_1x_1+\beta_2x_2+\cdots+\beta_nx_n\leq x_0+\alpha_1x_1+\alpha_2x_2+\cdots \alpha_nx_n+c_\alpha-d_\alpha\leq -d_\alpha.
$$
We introduce variables $y_\alpha$, $\alpha\in S$. Then the dual linear program is:\\[10pt]
minimize $\sum_{\alpha\in S}(- c_{\alpha})y_{\alpha}$\\
under the constraints: $\sum_{\alpha\in S} \alpha y_\alpha=\beta$, $\sum_{\alpha}y_\alpha=1$ and $y_\alpha\geq 0$ for all $\alpha$.\\[10pt]
There exists an optimal solution $y_\alpha$, $\alpha\in S$
and we may assume that this solution is rational. We can write $y_\alpha=k_\alpha/k$ where $k$ and $k_\alpha$ are nonnegative integers for all $\alpha$. From $\sum_{\alpha\in S}y_\alpha=1$ and $\sum_{\alpha\in S}\alpha y_\alpha=\beta$
follows that $\sum_{\alpha\in S}k_\alpha=k$ and $\sum_{\alpha\in S}k_\alpha \alpha=k\beta$.
The linear program and its dual have the same optimal value. So we have
  $-\sum_{\alpha}c_\alpha y_\alpha\leq -d_\beta$. It follows that $\sum_{\alpha} k_\alpha c_\alpha\geq k d_\beta$.
  In $\T$ we have that
  $$
  d_\beta^{\odot k}\leq \bigodot_{\alpha\in S} c_\alpha^{\odot k_{\alpha}}.
  $$
 By Lemma~\ref{lem:idempotentmonomialbound} we have 
\begin{multline*}
(d_\beta \odot x^\beta)^{\odot k}=d_\beta^{\odot k} \odot (x^{\beta})^{\odot k}\leq \Big( \bigodot_{\alpha\in S}c_\alpha ^{\odot k_\alpha}\Big)\odot \Big(
\bigodot_{\alpha \in S} (x^\alpha)^{\odot k_\alpha}\Big)=\\=\bigodot_{\alpha\in S} (c_\alpha \odot x^\alpha)^{\odot k_\alpha}\leq \Big(\bigoplus_{\alpha\in S} c_\alpha \odot x^{\alpha}\Big)^{\odot k}=f^{\odot k}
\end{multline*}
in $\T\{x_1,x_2,\dots,x_n\}$. 
Since Lemma~\ref{lem:TropicalishProperties}(e) applies to $R^\circ$, we have
$$
\pi(g)^{\odot k}=
\Big(\bigoplus_\beta \pi(d_\beta\odot x^\beta)\Big)^{\odot k}=
\bigoplus_\beta \pi(d_\beta\odot x^\beta)^{\odot k}=\bigoplus_\beta \pi((d_\beta \odot x^\beta)^{\odot k})\leq \pi(f^{\odot k})=\pi(f)^{\odot k}.
$$
By Lemma~\ref{lem:TropicalishProperties}(d)
we have $\pi(g)\leq \phi(f)$.
Similar reasoning with the roles of $f$ and $g$ interchanged gives $\pi(f)\leq\pi(g)$. We conclude that $\pi(f)=\pi(g)$. 
This proves that $\overline{\phi}$ is injective, so $\overline{\phi}$ is an isomorphism.  
\end{proof}

\subsection{Convex polynomial semirings}
Motivated by Theorem~\ref{theo:polyringiso}, we make the following definition.

\begin{definition}
    Suppose that $(R,\oplus,\odot)$ is an convex semiring. We define the convex polynomial semiring over $R$ by $R[x]:=R\{x\}^\circ$.
\end{definition}
Recall that in the construction of $R\{x\}^\circ$
we introduced an equivalence relation $\sim$ on $\R\{x\}$ by 
 $b(x)\sim c(x)$ if and only there exists a nonzero polynomial $a(x)\in R\{x\}$ with $a(x)b(x)=a(x)c(x)$.
 Then $R[x]$ is the set of all equivalence classes $[b(x)]$ with $b(x)\in R\{x\}$. Suppose $b,c\in R\subseteq R\{x\}$ and $b\sim c$. Then we have
 $a(x)b\sim a(x)c$ for a nonzero polynomial $a(x)$.
 If $a(x)=a_0\oplus a_1x\oplus \cdots \oplus a_kx^k$ then $a_i\neq 0$ for some $i$.
 So we have $a_ib=a_ic$ and $b=c$ by the cancellative property of $R$. By identifying $b\in R$ with $[b]\in R[x]$ we can view $R$ as a sub-semiring of $R[x]$.

\begin{lemma}\label{lem:extendAIMC}
    Suppose $\phi:R\to S$ is a homomorphism between convex semirings.
    \begin{enumerate}
    \letters
        \item We can uniquely extend $\phi$ to a homomorphism $\widehat{\phi}:R[x]\to S$  with $\widehat{\phi}(x)={\bf 0}$.
        \item If $y\neq {\bf 0}$ and $\phi^{-1}({\bf 0})=\{{\bf 0}\}$ then we can uniquely extend $\phi$ to a homomorphism  $\widehat{\phi}:R[x]\to S$ with $\widehat{\phi}(x)=y$. Moreover, we have $\widehat{\phi}^{-1}({\bf 0})={\bf 0}$.
    \end{enumerate}
    \end{lemma}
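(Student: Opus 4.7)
For part (a), my plan is to first invoke the universal property of the formal polynomial semiring $R\{x\}$ (stated just above Theorem~\ref{theo:polyringiso}) to obtain the unique homomorphism $\psi:R\{x\}\to S$ satisfying $\psi|_R=\phi$ and $\psi(x)=\mathbf{0}$. Explicitly, for $a(x)=\bigoplus_k a_k x^k$ one has $\psi(a(x))=\phi(a_0)$ since $\mathbf{0}^k=\mathbf{0}$ for $k\geq 1$. I cannot directly apply Lemma~\ref{lem:universal} here because $x$ itself is in $\psi^{-1}(\mathbf{0})$; instead, I will check by hand that $\psi$ is constant on $\sim$-equivalence classes, so it descends to $R[x]=R\{x\}^\circ$. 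Suppose $a(x)c(x)=b(x)c(x)$ with $c(x)=\bigoplus_k c_k x^k\neq \mathbf{0}$, and let $j$ be the least index with $c_j\neq\mathbf{0}$. Then the coefficient of $x^j$ in $a(x)c(x)$ is $\bigoplus_{i+k=j}a_ic_k=a_0c_j$, and similarly $b_0c_j$ on the other side. Cancellativity of $R$ gives $a_0=b_0$, hence $\psi(a(x))=\psi(b(x))$. Uniqueness of $\widehat{\phi}$ is forced since any homomorphism extending $\phi$ and sending $x\mapsto\mathbf{0}$ must satisfy $\widehat{\phi}([a(x)])=\phi(a_0)$.

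For part (b), I again use the universal property of $R\{x\}$ to obtain the unique $\widetilde{\phi}:R\{x\}\to S$ with $\widetilde{\phi}|_R=\phi$ and $\widetilde{\phi}(x)=y$, so $\widetilde{\phi}(\bigoplus_k a_k x^k)=\bigoplus_k \phi(a_k)y^k$. The plan is to verify the hypothesis $\widetilde{\phi}^{-1}(\mathbf{0})=\{\mathbf{0}\}$ of Lemma~\ref{lem:universal}, which then produces the desired descent $\widehat{\phi}:R\{x\}^\circ=R[x]\to S$. If $\bigoplus_k \phi(a_k)y^k=\mathbf{0}$, then each summand is $\leq \mathbf{0}$, so Lemma~\ref{lem:idempotentproperties}(d) forces $\phi(a_k)y^k=\mathbf{0}$ for every $k$. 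Since $S$ is convex, hence a weak domain, and $y\neq\mathbf{0}$ implies $y^k\neq\mathbf{0}$, we deduce $\phi(a_k)=\mathbf{0}$; the assumption $\phi^{-1}(\mathbf{0})=\{\mathbf{0}\}$ then gives $a_k=\mathbf{0}$ for all $k$, so $a(x)=\mathbf{0}$. The remaining claim $\widehat{\phi}^{-1}(\mathbf{0})=\{\mathbf{0}\}$ is immediate: if $\widehat{\phi}([a(x)])=\mathbf{0}$, then $\widetilde{\phi}(a(x))=\mathbf{0}$, whence $a(x)=\mathbf{0}$ and $[a(x)]=\mathbf{0}$.

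The main subtle point is the one in part (a): because Lemma~\ref{lem:universal} requires $\psi^{-1}(\mathbf{0})=\{\mathbf{0}\}$, which fails when $x\mapsto \mathbf{0}$, one must verify directly that $\psi$ respects $\sim$. The minimum-degree coefficient trick handles this cleanly, exploiting the cancellativity of $R$ rather than of $R\{x\}$ (which, as the example preceding Theorem~\ref{theo:polyringiso} shows, can fail).
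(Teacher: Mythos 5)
Your proof is correct and follows essentially the same route as the paper: in (a) your ``lowest nonzero coefficient of the multiplier'' computation is just a coefficient-level restatement of the paper's ``factor out a power of $x$ so the constant term is nonzero'' step, and in (b) your verification that $\widetilde{\phi}$ kills only $\mathbf{0}$ followed by an appeal to Lemma~\ref{lem:universal} is exactly the paper's inline argument packaged through that earlier lemma. No gaps.
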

\begin{proof} \ \\
(a) For a polynomial $a(x)=a_0\oplus a_1x\oplus a_2x^2+\cdots\oplus a_kx^k\in R\{x\}$ we define
$\widehat{\phi}([a(x)])=\phi(a_0)=\phi(a({\bf 0}))$. We have to show that $\widehat{\phi}$ is well-defined. Suppose $b(x)\sim c(x)$ in $R\{x\}$. Then there exists $a(x)\in R\{x\}$ with $a(x)b(x)=a(x)c(x)$. By factoring out a power of $x$ we may assume that $a({\bf 0})\in R$ is nonzero. We get $a({\bf 0})b({\bf 0})=a({\bf 0})c({\bf 0})$. Because $R$ is cancellative, it follows that $b({\bf 0})=c({\bf 0})$
and $\phi(b({\bf 0}))=\phi(c({\bf 0}))$.

(b) The homomorphism $\phi$ uniquely extends to a homomorphism $\widetilde{\phi}:R\{x\}\to S$ with $\widetilde{\phi}(x)=y$.
If $a(x)=a_0\oplus a_1x\oplus a_2x^2\oplus \cdots\oplus a_kx^k$ is nonzero, then $a_i\neq {\bf 0}$ and $\phi(a_i)\neq {\bf 0}$ for some $i$, so $\widetilde{\phi}(a(x))=\phi(a_0)\oplus \phi(a_1)y\oplus\cdots\oplus\phi(a_k)y^k$ is nonzero.
If $b(x),c(x)\in R\{x\}$ and $b(x)\sim c(x)$ then there exists a nonzero $a(x)\in R\{x\}$ with $a(x)b(x)=a(x)c(x)$. It follows that $\widetilde{\phi}(a(x))\widetilde{\phi}(b(x))=\widetilde{\phi}(a(x))\widetilde{\phi}(c(x))$. Because $S$ is cancellative and $\widetilde{\phi}(a(x))$ is nonzero, we get
$\widetilde{\phi}(b(x))=\widetilde{\phi}(c(x))$. 
This proves that $\widetilde{\phi}:R\{x\}\to S$ uniquely factors through some homomorphism $\widehat{\phi}:R[x]\to S$ such that $\widehat{\phi}([a(x)])=
\widetilde{\phi}(a(x))$.  If $a(x)$ is nonzero, then $\widehat{\phi}([a(x)])=\widetilde{\phi}(a(x))$ is nonzero.  
\end{proof}
Inductively we define $R[x_1,x_2,\dots,x_n]:=R[x_1,x_2,\dots,x_{n-1}][x_n]$. The following lemma shows that the symmetric group $S_n$ acts on this convex polynomial ring by automorphisms.
\begin{lemma}\label{lem:permutePoly}
Suppose that $R$ is a convex semiring.
   For any permutation $\sigma\in S_n$ there exists a unique automorphism  $\rho_\sigma$
   of the semiring $R[x_1,x_2,\dots,x_n]$ with $\phi(a)=a$ for all $a\in R$ and $\rho_\sigma(x_i)=x_{\sigma(i)}$ for all $i$.
\end{lemma}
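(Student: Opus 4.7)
The plan is to construct $\rho_\sigma$ by iterated application of Lemma~\ref{lem:extendAIMC}(b) and then verify bijectivity using the uniqueness clause of that lemma. I do \emph{not} need to look inside the construction of $R[x_1,\dots,x_n]$ at all.

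First I would bootstrap Lemma~\ref{lem:extendAIMC}(b) into the following $n$-variable universal property by induction on $n$: if $\phi\colon R\to S$ is a homomorphism of convex semirings with $\phi^{-1}({\bf 0})=\{{\bf 0}\}$ and $y_1,\dots,y_n\in S$ are all nonzero, then there is a unique homomorphism $\widehat{\phi}\colon R[x_1,\dots,x_n]\to S$ with $\widehat{\phi}|_R=\phi$ and $\widehat{\phi}(x_i)=y_i$ for every $i$, and moreover $\widehat{\phi}^{-1}({\bf 0})=\{{\bf 0}\}$. The base case $n=1$ is Lemma~\ref{lem:extendAIMC}(b). For the inductive step, first use the induction hypothesis to extend $\phi$ uniquely to $\psi\colon R[x_1,\dots,x_{n-1}]\to S$ with $\psi(x_i)=y_i$ for $i<n$, and with $\psi^{-1}({\bf 0})=\{{\bf 0}\}$; then apply Lemma~\ref{lem:extendAIMC}(b) to $\psi$ (with the single variable $x_n$ and target value $y_n$) to obtain the required $\widehat{\phi}$ on $R[x_1,\dots,x_n]=R[x_1,\dots,x_{n-1}][x_n]$. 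Uniqueness and the condition $\widehat{\phi}^{-1}({\bf 0})=\{{\bf 0}\}$ propagate through both steps.

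Now I would apply this universal property with $S=R[x_1,\dots,x_n]$, $\phi\colon R\hookrightarrow S$ the inclusion (which satisfies $\phi^{-1}({\bf 0})=\{{\bf 0}\}$ since $R$ embeds into $R[x_1]$ and then, iterating, into $R[x_1,\dots,x_n]$), and $y_i:=x_{\sigma(i)}$. The variables $x_{\sigma(i)}$ are nonzero in $R[x_1,\dots,x_n]$, so the hypotheses are met and we obtain a unique homomorphism $\rho_\sigma\colon R[x_1,\dots,x_n]\to R[x_1,\dots,x_n]$ fixing $R$ pointwise and sending $x_i\mapsto x_{\sigma(i)}$.

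Finally I would show that $\rho_\sigma$ is bijective by computing its compositional inverse. The composite $\rho_\sigma\circ\rho_{\sigma^{-1}}$ is a homomorphism $R[x_1,\dots,x_n]\to R[x_1,\dots,x_n]$ which fixes $R$ pointwise and sends $x_i\mapsto\rho_\sigma(x_{\sigma^{-1}(i)})=x_{\sigma(\sigma^{-1}(i))}=x_i$. The identity map has exactly the same two properties, so the uniqueness clause of the universal property (applied with $y_i=x_i$) forces $\rho_\sigma\circ\rho_{\sigma^{-1}}=\mathrm{id}$; symmetrically $\rho_{\sigma^{-1}}\circ\rho_\sigma=\mathrm{id}$. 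Hence $\rho_\sigma$ is an automorphism, and its uniqueness as a homomorphism with the stated properties is exactly the uniqueness in the universal property. No step here is hard; the only thing to be careful about is the inductive upgrade from the one-variable statement of Lemma~\ref{lem:extendAIMC}(b) to the $n$-variable form, where one must carry along the hypothesis $\widehat{\phi}^{-1}({\bf 0})=\{{\bf 0}\}$ at each stage so that Lemma~\ref{lem:extendAIMC}(b) can be reapplied.
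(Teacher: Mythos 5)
Your proposal is correct and follows essentially the same route as the paper: iterate Lemma~\ref{lem:extendAIMC}(b) by induction to get the homomorphism $\rho_\sigma$ fixing $R$ and sending $x_i\mapsto x_{\sigma(i)}$, then use the uniqueness clause to identify $\rho_\sigma\circ\rho_{\sigma^{-1}}$ and $\rho_{\sigma^{-1}}\circ\rho_\sigma$ with the identity. The only points you assert without proof — that the inclusion $R\hookrightarrow R[x_1,\dots,x_n]$ has trivial preimage of ${\bf 0}$ and that each $x_{\sigma(i)}$ is nonzero — are immediate from the construction of $R[x]=R\{x\}^\circ$ and are left implicit in the paper as well.
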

\begin{proof}
By induction and part (b) of Lemma~\ref{lem:extendAIMC} we show that there exists a unique homomorphism $\rho_{\sigma}^{(k)}:R[x_1,x_2,\dots,x_k]\to R[x_1,x_2,\dots,x_n]$ with $\rho_{\sigma}^{(k)}(a)=a$ for all $a\in R$ and $\rho_{\sigma}^{(k)}(x_i)=x_{\sigma(i)}$ for $i=1,2,\dots,k$. Now we take $\rho_{\sigma}=\rho_{\sigma}^{(n)}$. From the uniqueness follows that $\rho_{\tau}\rho_{\sigma}=\rho_{\tau\sigma}$ and if $1\in S_n$ is the identity then $\rho_1$ is the identity. In particular, we have $\rho_{\sigma}\rho_{\sigma^{-1}}=\rho_{\sigma^{-1}}\rho_\sigma=\rho_1$ is the indentity, so $\rho_{\sigma}$ is an automorphism.
\end{proof}
\begin{proposition}\label{prop:Universal}
    Suppose $\phi:R\to S$ is a homomorphism between convex semirings with $\phi^{-1}({\bf 0})=\{{\bf 0}\}$. For given $y_1,y_2,\dots,y_n\in S$ there exists a unique homomorphism $\widehat{\phi}:R[x_1,x_2,\dots,x_n]\to S$ with $\widehat{\phi}(a)=\phi(a)$ for all $a\in R$ and $\widehat{\phi}(x_i)=y_i$ for all $i$.
\end{proposition}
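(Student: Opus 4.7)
My plan is to prove Proposition~\ref{prop:Universal} by iterating Lemma~\ref{lem:extendAIMC}, extending $\phi$ one variable at a time. The essential difficulty is that Lemma~\ref{lem:extendAIMC} comes in two parts: part~(b), which sends $x$ to a nonzero element $y$, preserves the zero-reflecting property $\phi^{-1}(\mathbf{0}) = \{\mathbf{0}\}$ (so one can continue the induction), but part~(a), which sends $x$ to $\mathbf{0}$, does not. A naive induction runs into trouble as soon as some $y_i = \mathbf{0}$ appears before a nonzero $y_j$. The fix is to first reorder the variables so that all nonzero $y_i$'s are handled before any zero ones.

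More precisely, I would pick a permutation $\sigma \in S_n$ together with an integer $0 \leq k \leq n$ such that $y_{\sigma(1)}, \ldots, y_{\sigma(k)}$ are nonzero while $y_{\sigma(k+1)}, \ldots, y_{\sigma(n)}$ are zero. By Lemma~\ref{lem:permutePoly} we have an automorphism $\rho_\sigma$ of $R[x_1, \ldots, x_n]$ sending $x_i$ to $x_{\sigma(i)}$, so it suffices to construct a homomorphism $\widehat{\phi}' \colon R[x_1, \ldots, x_n] \to S$ with $\widehat{\phi}'(x_j) = y_{\sigma(j)}$ for all $j$ and then set $\widehat{\phi} := \widehat{\phi}' \circ \rho_{\sigma^{-1}}$, which will satisfy $\widehat{\phi}(x_i) = y_i$.

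To construct $\widehat{\phi}'$, I would apply Lemma~\ref{lem:extendAIMC}(b) inductively $k$ times, obtaining a chain of zero-reflecting homomorphisms $\widehat{\phi}'^{(i)} \colon R[x_1, \ldots, x_i] \to S$ for $i = 1, \ldots, k$, each sending $x_j$ to $y_{\sigma(j)}$; this is where the zero-reflecting conclusion of part~(b) is crucial, since it is what lets the induction continue. Once $i$ exceeds $k$, the remaining $y_{\sigma(j)}$ all equal $\mathbf{0}$, so I would switch to Lemma~\ref{lem:extendAIMC}(a) for the last $n - k$ steps, yielding the desired $\widehat{\phi}' = \widehat{\phi}'^{(n)}$.

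For uniqueness, any two homomorphisms $\widehat{\phi}_1, \widehat{\phi}_2 \colon R[x_1, \ldots, x_n] \to S$ that agree with $\phi$ on $R$ and send each $x_i$ to $y_i$ must coincide: arguing by induction on $n$ and writing $R[x_1, \ldots, x_n] = R[x_1, \ldots, x_{n-1}][x_n]$, the restrictions of $\widehat{\phi}_1$ and $\widehat{\phi}_2$ to $R[x_1, \ldots, x_{n-1}]$ are equal by the inductive hypothesis, and the uniqueness clauses in both parts of Lemma~\ref{lem:extendAIMC} then pin down the extension to $x_n$. The one step that I expect to require genuine care is the permutation preprocessing: I need to verify that the composition $\widehat{\phi}' \circ \rho_{\sigma^{-1}}$ really lands in the right place on each $x_i$, which is a direct computation from $\rho_{\sigma^{-1}}(x_i) = x_{\sigma^{-1}(i)}$, but it is the only structural ingredient beyond the two lemmas above.
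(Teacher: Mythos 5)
Your proposal is correct and follows essentially the same route as the paper: handle the case where the nonzero $y_i$'s come first by iterating Lemma~\ref{lem:extendAIMC} (part (b) then part (a)), and reduce the general case via the permutation automorphism $\rho_\sigma$ of Lemma~\ref{lem:permutePoly}. Your write-up is in fact slightly more explicit than the paper's, which leaves the uniqueness argument and the role of the zero-reflecting conclusion of part (b) implicit.
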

\begin{proof}
Suppose there is an integer $k$ such that $y_1,y_2,\dots,y_k$ are nonzero
and $y_{k+1}=y_{k+2}=\cdots=y_{n}={\bf 0}$. Then the proposition follows from Lemma~\ref{lem:extendAIMC} and induction. In the general case, there exists a permutation $\sigma\in S_n$ and an integer $k$ such that $y_{\sigma(1)}, y_{\sigma(2)},\dots,y_{\sigma(k)}$ are nonzero 
and $y_{\sigma(k+1)}=y_{\sigma(k+2)}=\cdots=y_{\sigma(n)}={\bf 0}$. 
There exists an extension $\widehat{\phi}:R[x_1,x_2,\dots,x_n]\to S$
with $\widehat{\phi}(x_i)=y_{\sigma(i)}$. If we replace $\widehat{\phi}$ with $\widehat{\phi}\circ \rho_{\sigma^{-1}}$ (where $\rho_{\sigma^{-1}}$ is defined in Lemma~\ref{lem:permutePoly}) then we get $\widehat{\phi}(x_i)=y_i$ for all $i$ and $\widehat{\phi}(a)=a$ for all $a\in R$.
The uniqueness is clear.
\end{proof}
\subsection{The convex semiring of convex sets}
Let $\PP_n$ be the set of all compact convex subsets of $\R^n$ (including the empty set). For a subset $A\subseteq \R^n$ we write $\conv(A)$ for the convex hull of $A$. We define addition in $\PP_n$ by 
$$A\oplus B=\conv(A\cup B).$$ 
Multiplication  is given by the Minkowski sum:
$$
A\odot B=A+B=\{a+b\mid a\in A,b\in B\}.
$$
The identity for addition is ${\bf 0}:=\emptyset$ and the identity for multiplication is ${\bf 1}:=\{0\}\subseteq \R^n$. Now $(\PP_n,\oplus,\odot)$ is a semiring. For all $A\in \PP_n$ we have $A\oplus A=\conv(A\cup A)=A$, so $\PP_n$ is idempotent. The cancellation property holds for the Minkowski sum of convex compact subsets of $\R^n$, so $\PP_n$ is also cancellative. So the semiring $\PP_n$ is convex.

\begin{theorem}
    The semiring $\B[x_1,x_2,\dots,x_n]$ is isomorphic to the the sub-semiring of $\PP_n$ consisting of all convex hulls of finite subsets of $\N^n$.
\end{theorem}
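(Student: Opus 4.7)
The plan is to use the universal property (Proposition~\ref{prop:Universal}) to produce a homomorphism $\overline{\psi}\colon\B[x_1,\dots,x_n]\to\mathcal{P}$, where $\mathcal{P}\subseteq\PP_n$ denotes the sub-semiring of convex hulls of finite subsets of $\N^n$, and then to show separately that $\overline{\psi}$ is surjective and injective.

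As a preliminary, I would check that $\mathcal{P}$ really is a convex sub-semiring of $\PP_n$: closure under $\oplus$ follows from $\conv(S)\oplus\conv(T)=\conv(S\cup T)$, and closure under $\odot$ from the standard fact that $\conv(S)+\conv(T)=\conv(S+T)$ for finite $S,T\subseteq\N^n$; idempotency and cancellativity are inherited from $\PP_n$. Since the inclusion $\B\hookrightarrow\mathcal{P}$ has trivial $\mathbf{0}$-fiber, Proposition~\ref{prop:Universal} produces a unique homomorphism $\overline{\psi}\colon\B[x_1,\dots,x_n]\to\mathcal{P}$ with $\overline{\psi}(x_i)=\{e_i\}$, where $e_i$ is the $i$-th standard basis vector; by multiplicativity, $\overline{\psi}(x^\alpha)=\{\alpha\}$ for every $\alpha\in\N^n$.

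For surjectivity, a short induction on $n$ using the definition $\B[x_1,\dots,x_n]=\B[x_1,\dots,x_{n-1}]\{x_n\}^\circ$ shows that every element of $\B[x_1,\dots,x_n]$ is either $\mathbf{0}$ or is represented by a polynomial $\bigoplus_{\alpha\in S}x^\alpha$ for some finite nonempty $S\subseteq\N^n$; its image under $\overline{\psi}$ is $\bigoplus_{\alpha\in S}\{\alpha\}=\conv(S)$, and every element of $\mathcal{P}$ arises in this way.

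The only real obstacle is injectivity. Here I would first note that $\B[x_1,\dots,x_n]$ is itself convex: each stage of the inductive construction is cancellative by Lemma~\ref{lem:extendAIMC}, while idempotency is preserved by the $\sim$-quotient. Now suppose $\overline{\psi}(f)=\overline{\psi}(g)$ with $f=\bigoplus_{\alpha\in S}x^\alpha$ and $g=\bigoplus_{\beta\in T}x^\beta$, so that $\conv(S)=\conv(T)$. For each $\beta\in T\subseteq\conv(S)$, Lemma~\ref{lem:convex} applied in the convex semiring $\B[x_1,\dots,x_n]$ (with $c_i=x_i$) yields $x^\beta\leq f$; taking $\oplus$ over $\beta\in T$ gives $g\leq f$, and the reverse inequality follows by symmetry. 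Hence $\overline{\psi}$ is injective, and therefore an isomorphism.
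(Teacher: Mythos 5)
Your proposal is correct and follows essentially the same route as the paper: construct the evaluation homomorphism sending $x_i\mapsto\{{\bf e}_i\}$ via the universal property, observe that the image consists exactly of convex hulls of finite subsets of $\N^n$, and prove injectivity by applying Lemma~\ref{lem:convex} to each $\beta\in T\subseteq\conv(S)$ to get $g\leq f$ and, by symmetry, $f\leq g$. The only cosmetic difference is that the paper first builds the map on the formal semiring $\B\{x_1,\dots,x_n\}$ and factors it through the $\sim$-quotient, whereas you invoke Proposition~\ref{prop:Universal} directly on $\B[x_1,\dots,x_n]$; both are valid.
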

\begin{proof}
There is a unique homomorphism $\phi:\B\{x_1,x_2,\dots,x_n\}\to \PP_n$ with $\phi(x_i)=\{{\bf e}_i\}$ where ${\bf e}_i$ is the $i$-th basis vector in $\R^n$. For $\alpha\in \N^n$ we have $$\textstyle \phi(x^{\alpha})=\phi\big(\bigodot_{i=1}^n x_i^{\alpha_i}\big)=
\bigodot_{i=1}^n \{{\bf e}_i\}^{\alpha_i}=\sum_{i=1}^n \{\alpha_i {\bf e}_i\}=\big\{\sum_{i=1}^n \alpha_i{\bf e}_i\big\}=\{\alpha\}.$$
For a finite subset $S\subseteq \N^n$ we get
$$
\textstyle
\phi\big(\bigoplus_{\alpha\in S} x^\alpha\big)=\bigoplus_{\alpha\in S} \{\alpha\}=\conv(S).
$$
Because $\PP_n$ is convex, the map $\phi$ factors through a homomorphism $\overline{\phi}:\B[x_1,x_2,\dots,x_n]\to \PP_n$.
The image of $\phi$ and of $\overline{\phi}$ is exactly the set of all convex hulls of finite subsets of $\N^n$. We will show that $\overline{\phi}$ is injective.
Suppose that $f=\bigoplus _{\alpha\in S}x^\alpha, g=\bigoplus_{\beta\in T}x^\beta\in \B[x_1,x_2,\dots,x_n]$
and $\overline{\phi}(f)=\overline{\phi}(g)$.
Then we have $\conv(S)=\conv(T)$.
If $\beta\in T$ then $\beta\in \conv(S)$ and therefore
$x^\beta\leq \bigoplus_{\alpha\in S}x^\alpha=f$ by Lemma~\ref{lem:convex}.
This is true for all $\beta\in T$, so
$g=\bigoplus _{\beta\in T} x^\beta\leq f$. Similarly, we can show that  $f\leq g$, so $f=g$.

\end{proof}

\begin{definition}\label{def:NP}
  Suppose that $R$ is a convex semiring, and define $\phi:R\to \B$ by $\phi({\bf 0})={\bf 0}$ and $\phi(a)={\bf 1}$ for all $a\in R\setminus\{{\bf 0}\}$. Then there exists a unique homomorphism $\Pi:R[x_1,x_2,\dots,x_n]\to \B[x_1,x_2,\dots,x_n]$
  with $\Pi(a)=\phi(a)$ for all $a\in R$ and $\Pi(x_i)=x_i$ for all $i$. For an element $f\in R[x_1,x_2,\dots,x_n]$,  $\Pi(f)\in \B[x_1,x_2,\dots,x_n]\subseteq \PP_n$ is the {\em Newton polytope} of $f$.
\end{definition}

\section{Tropical invariants}
\subsection{The transfer map}
Suppose that $R$ is an idempotent semiring and $G$ is a finite group acting on $R$ by automorphisms. The invariant semiring is $R^G=\{f\in R\mid \forall g\in G\, g\cdot f=f\}$. 
\begin{definition}
    The transfer map $\Tr:R\to R^G$ is defined by
    $$
    \Tr_G(a)=\bigoplus_{g\in G} g\cdot a.
    $$
\end{definition}
One may think of $\Tr_G$ as a Reynolds operator in Invariant Theory or the transfer map in Modular Invariant Theory.
Some obvious properties of $\Tr_G$ are 
$\Tr_G(a)=a$, $\Tr_G(ab)=a\Tr_G(b)$ and $
\Tr_G(b\oplus c)=\Tr_G(b)\oplus\Tr_G(c)$ for $a\in R^G$ and $b,c\in R$. 
\begin{lemma}\label{lem:surjectiveInvariants}
If $\phi:R\to S$ is a $G$-equivariant homomorphism between additive idempotent semirings, then $\phi(R^G)=\phi(R)^G$.
\end{lemma}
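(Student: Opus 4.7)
The plan is to establish the two inclusions separately, with the non-trivial direction handled by applying the transfer map $\Tr_G$ defined just above the lemma.

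First I would dispatch the easy inclusion $\phi(R^G)\subseteq \phi(R)^G$. If $a\in R^G$ then $g\cdot a = a$ for every $g\in G$, so by $G$-equivariance $g\cdot \phi(a)=\phi(g\cdot a)=\phi(a)$, which means $\phi(a)\in \phi(R)^G$. This uses nothing beyond the definitions.

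For the reverse inclusion $\phi(R)^G\subseteq \phi(R^G)$, I would pick an arbitrary $s\in \phi(R)^G$ and write $s=\phi(a)$ for some $a\in R$ (which need not itself be invariant). The idea is to average $a$ using the transfer map and show that this averaged element maps to $s$. Concretely, set $a' := \Tr_G(a)=\bigoplus_{g\in G}g\cdot a$. Then $a'\in R^G$ by construction (any $h\in G$ permutes the summands of $\bigoplus_{g\in G}g\cdot a$). Applying $\phi$ and using that $\phi$ is a semiring homomorphism and $G$-equivariant gives
$$
\phi(a')=\phi\Bigl(\bigoplus_{g\in G}g\cdot a\Bigr)=\bigoplus_{g\in G}\phi(g\cdot a)=\bigoplus_{g\in G}g\cdot \phi(a)=\bigoplus_{g\in G}g\cdot s=\bigoplus_{g\in G}s.
$$
The final step is where additive idempotence of $S$ enters: since $s\oplus s=s$, an easy induction yields $\bigoplus_{g\in G}s=s$, so $\phi(a')=s$ and therefore $s\in \phi(R^G)$.

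There is no real obstacle here; the only subtlety is that we must use idempotence of the target semiring $S$ (not merely of $R$) to collapse the $|G|$-fold sum of $s$ back to $s$. Once that observation is made, the argument is a one-line application of the transfer map, which is why $\Tr_G$ plays the role of a Reynolds operator in this idempotent setting.
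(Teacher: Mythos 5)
Your proof is correct and follows essentially the same route as the paper: the forward inclusion by equivariance, and the reverse inclusion by lifting $s=\phi(a)$ to $\Tr_G(a)\in R^G$ and using equivariance plus additive idempotence of $S$ to see $\phi(\Tr_G(a))=\Tr_G(s)=s$. Your explicit remark that idempotence of the target is what collapses $\bigoplus_{g\in G}s$ to $s$ is exactly the point the paper packages into the identity $\Tr_G(a)=a$ for $a\in S^G$.
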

\begin{proof}
For $b\in R$ we have
$$
\phi(\Tr_G(b))=\phi\Big(\bigoplus_{g\in G} g\cdot b\Big)=\bigoplus_{g\in G} g\cdot \phi(b)=
\Tr_G(\phi(b))\in S^G.
$$
In particular, if $b\in R^G$ then we have $\phi(b)=\phi(\Tr_G(b))\in S^G$, so $\phi(R^G)\subseteq S^G\cap \phi(R)=\phi(R)^G$.
For every $a\in \phi(R)^G$ there exists a $b\in R$ with $\phi(b)=a$ and $a=\Tr_G(a)=\Tr_G(\phi(b))=\phi(\Tr_G(b))$. Since $\Tr_G(b)\in R^G$ this shows that 
 $\phi(R)^G\subseteq \phi(R^G)$.
\end{proof}
If $S$ and $T$ are sub-semirings of a semiring $R$,
then $ST$ denotes the smallest sub-semiring of $R$ containing $S$ and $T$. The semiring $ST$ is the set of all elements of the form $\bigoplus_{i=1}^r a_ib_i$ with $a_i\in S$ and $b_i\in T$ for all $i$.


\subsection{Symmetric group invariants}
Suppose that $R$ is a convex semiring and the symmetric group $S_n$ acts on the convex polynomial ring $R[x_1,x_2,\dots,x_n]$ by permuting the variables. We define the (tropical) elementary symmetric functions $e_1,e_2,\dots,e_n$ by
$$
e_k=\bigoplus_{1\leq i_1<i_2<\cdots<i_k\leq n} x_{i_1}\odot x_{i_2}\odot \cdots\odot x_{i_k}.
$$

\begin{theorem}[Rado's Theorem]\label{theo:convexhull}
Suppose $\alpha=(\alpha_1,\alpha_2,\dots,\alpha_n), \beta=(\beta_1,\beta_2,\dots,\beta_n)\in \R^n$ with 
$\alpha_1\geq \alpha_2\geq \cdots \geq \alpha_n$ and $\beta_1\geq \beta_2\geq \cdots \geq \beta_n$.
Then $\beta$ lies in the convex hull of $\sigma(\alpha)$, $\sigma\in S_n$
if and only if $\alpha_1+\alpha_2+\cdots+\alpha_n=\beta_1+\beta_2+\cdots+ \beta_n$ and $ \alpha_1+\alpha_2+\cdots +\alpha_k\geq \beta_1+\beta_2+\cdots+\beta_k$
for $k=1,2,\dots,n-1$.
\end{theorem}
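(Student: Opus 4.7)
The plan is to handle the two implications of Rado's Theorem separately.

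For necessity (``only if''), I would introduce for each $k\in\{1,\dots,n\}$ the function $S_k:\R^n\to\R$ defined by $S_k(x)=\max\{\sum_{i\in I}x_i\,:\,I\subseteq\{1,\dots,n\},\,|I|=k\}$. This $S_k$ is convex, being the maximum of finitely many linear functionals, and it is $S_n$-invariant. Since $\alpha$ and $\beta$ are sorted weakly decreasingly, $S_k(\alpha)=\alpha_1+\cdots+\alpha_k$ and $S_k(\beta)=\beta_1+\cdots+\beta_k$. Writing $\beta=\sum_\sigma c_\sigma\sigma(\alpha)$ as a convex combination and applying convexity together with $S_n$-invariance yields the partial-sum inequality $\beta_1+\cdots+\beta_k\le \alpha_1+\cdots+\alpha_k$. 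The equality of total sums follows because $\sum_ix_i$ is a linear functional, preserved by convex combinations of permutations of $\alpha$.

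For sufficiency (``if''), I would argue constructively by Hardy--Littlewood--P\'olya Robin Hood transfers. Assume $\alpha\neq\beta$, let $i$ be the smallest index with $\alpha_i\neq\beta_i$, and note that since $\alpha_l=\beta_l$ for $l<i$ the partial-sum inequality at $k=i$ forces $\alpha_i>\beta_i$. By the equality of total sums there is a smallest $j>i$ with $\alpha_j<\beta_j$, and $\alpha_l\ge\beta_l$ for $i<l<j$ by minimality of $j$. Set $t=\min(\alpha_i-\beta_i,\beta_j-\alpha_j)>0$ and form $\alpha'$ by $\alpha'_i=\alpha_i-t$, $\alpha'_j=\alpha_j+t$, all other entries unchanged. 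Since $\alpha_i-\alpha_j\ge (\alpha_i-\beta_i)+(\beta_j-\alpha_j)\ge 2t$, the vector $\alpha'$ is a convex combination of $\alpha$ and $(i\,j)\cdot\alpha$, so $\alpha'\in\conv(S_n\cdot\alpha)$. Re-sorting $\alpha'$ into the weakly decreasing vector $\alpha''$ keeps us in $\conv(S_n\cdot\alpha)$ by its $S_n$-invariance; then I would iterate the construction on $(\alpha'',\beta)$.

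Two verifications make the induction go through. First, $\alpha''$ still satisfies the Rado hypothesis with respect to $\beta$: for $k<i$ and for $k\ge j$ one has $S_k(\alpha')=S_k(\alpha)$, so majorization survives trivially; for $i\le k<j$ one estimates $S_k(\alpha'')=S_k(\alpha')\ge (\alpha_1+\cdots+\alpha_k)-t$ by using the indices $\{1,\dots,k\}$ in $\alpha'$, while the slack $(\alpha_1+\cdots+\alpha_k)-(\beta_1+\cdots+\beta_k)\ge\alpha_i-\beta_i\ge t$ (coming from $\alpha_l\ge\beta_l$ on $i\le l<j$) exactly absorbs the decrement, giving $S_k(\alpha'')\ge \beta_1+\cdots+\beta_k$. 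Second, termination: a direct calculation shows $\|\alpha'-\beta\|_1=\|\alpha-\beta\|_1-2t$, and rearranging into sorted order can only decrease $\ell^1$ distance to a sorted target, so $\|\alpha''-\beta\|_1\le \|\alpha-\beta\|_1-2t$; moreover at each step at least one coordinate becomes matched (either $\alpha'_i=\beta_i$ or $\alpha'_j=\beta_j$, whichever realizes the minimum $t$), so the process ends with $\alpha''=\beta$ after at most $n-1$ steps.

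The main obstacle is the middle-range partial-sum estimate in the first verification. Everything rests on choosing $i$ as the smallest index with $\alpha_i>\beta_i$ and $j$ as the smallest subsequent index with $\alpha_j<\beta_j$: these choices force $\alpha_l\ge\beta_l$ throughout the interval $i\le l<j$, which is exactly the slack of size $\ge t$ needed to absorb the Robin Hood transfer and preserve majorization. Once this slack argument is nailed down, the induction on the number of steps (bounded by $n-1$) delivers the conclusion that $\beta\in\conv(S_n\cdot\alpha)$.
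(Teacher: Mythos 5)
The paper itself offers no proof of this theorem (it cites Barvinok), so your argument stands on its own. The necessity direction is correct: $S_k$ is convex and $S_n$-invariant, so evaluating it on a convex combination of permutations of $\alpha$ gives exactly the partial-sum inequalities, and the total sum is preserved by linearity. The core of the sufficiency direction is also sound: the inequality $\alpha_i-\alpha_j\ge(\alpha_i-\beta_i)+(\beta_j-\alpha_j)\ge 2t$ does put $\alpha'$ on the segment between $\alpha$ and $(i\,j)\cdot\alpha$, and your slack computation correctly shows that the sorted vector $\alpha''$ still satisfies the partial-sum inequalities against $\beta$.

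The gap is in termination. You claim that at each step at least one coordinate becomes matched and conclude that at most $n-1$ steps suffice, but re-sorting can destroy the match that the transfer creates. Take $\alpha=(5,5,0)$ and $\beta=(4,3,3)$: here $i=1$, $j=3$, $t=1$, so $\alpha'=(4,5,1)$ has its first coordinate matched, yet $\alpha''=(5,4,1)$ agrees with $\beta$ in \emph{no} coordinate, and your procedure needs three steps, exceeding $n-1=2$. The $\ell^1$ decrease by $2t$ does not rescue the argument, because $t$ changes from step to step and nothing a priori prevents it from shrinking to zero, so no finite bound follows. The repair is to choose the transfer pair so that no re-sorting is needed: pick $i<j$ with $\alpha_i>\beta_i$, $\alpha_j<\beta_j$ and $\alpha_l=\beta_l$ for all $i<l<j$ (starting from your $i_0,j_0$, replace $i_0$ by the largest index in $[i_0,j_0)$ where $\alpha_l>\beta_l$). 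One then checks directly that $\alpha'$ is already weakly decreasing, your slack estimate goes through for this pair, the matched coordinate persists, and induction on the number of indices where $\alpha$ and $\beta$ differ terminates the process. Alternatively, sufficiency follows non-constructively from a separating-hyperplane plus Abel-summation argument, or from Hardy--Littlewood--P\'olya combined with Birkhoff's theorem on doubly stochastic matrices.
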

For the proof, see for example~\cite[VI, 2.3]{barvinok2002course}.

\begin{lemma}\label{lem:ProductTransfers}
Suppose $\alpha=(\alpha_1,\alpha_2,\dots,\alpha_n),\beta=(\beta_1,\beta_2,\dots,\beta_n)\in \N^n$ with
$\alpha_1\geq \alpha_2\geq \cdots\geq \alpha_n$
and $\beta_1\geq \beta_2\geq \cdots \geq \beta_n$. 
In $R[x_1,x_2,\dots,x_n]$  we have
$$
\Tr_{S_n}(x^\alpha)\Tr_{S_n}(x^\beta)=\Tr_{S_n}(x^{\alpha+\beta}).
$$
\end{lemma}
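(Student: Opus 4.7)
The plan is to expand both sides by distributivity and reduce the equality to a statement about convex hulls of lattice points, which we can verify via Rado's theorem (Theorem~\ref{theo:convexhull}) and Lemma~\ref{lem:convex}.

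First, by distributivity in $R[x_1,x_2,\dots,x_n]$,
$$
\Tr_{S_n}(x^\alpha)\,\Tr_{S_n}(x^\beta)=\bigoplus_{\sigma,\tau\in S_n}x^{\sigma(\alpha)+\tau(\beta)},\qquad\Tr_{S_n}(x^{\alpha+\beta})=\bigoplus_{\rho\in S_n}x^{\rho(\alpha+\beta)}.
$$
The inequality $\Tr_{S_n}(x^{\alpha+\beta})\leq \Tr_{S_n}(x^\alpha)\Tr_{S_n}(x^\beta)$ is immediate: for each $\rho\in S_n$, the term $x^{\rho(\alpha+\beta)}=x^{\rho(\alpha)+\rho(\beta)}$ appears on the right with $\sigma=\tau=\rho$, and by Lemma~\ref{lem:idempotentproperties}(a) each summand is bounded by the total sum.

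For the reverse inequality I claim that for every pair $\sigma,\tau\in S_n$, the vector $\gamma:=\sigma(\alpha)+\tau(\beta)\in\N^n$ lies in the convex hull of the orbit $\{\rho(\alpha+\beta):\rho\in S_n\}$. Granted this, Lemma~\ref{lem:convex} applied to the finite set $S=\{\rho(\alpha+\beta):\rho\in S_n\}\subseteq \N^n$ and $c=(x_1,\dots,x_n)\in R[x_1,\dots,x_n]^n$ gives $x^\gamma\leq \bigoplus_{\rho\in S_n}x^{\rho(\alpha+\beta)}=\Tr_{S_n}(x^{\alpha+\beta})$, so each summand of $\Tr_{S_n}(x^\alpha)\Tr_{S_n}(x^\beta)$ is bounded above by $\Tr_{S_n}(x^{\alpha+\beta})$, completing the proof.

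To verify the convex hull claim, I invoke Theorem~\ref{theo:convexhull} (Rado). Let $\gamma^{\downarrow}$ denote the rearrangement of $\gamma$ in weakly decreasing order; the vector $\alpha+\beta$ is already weakly decreasing since $\alpha$ and $\beta$ are. The equality $\sum_i \gamma_i=\sum_i\alpha_i+\sum_i\beta_i=\sum_i(\alpha+\beta)_i$ is immediate. For the partial sum inequalities, fix $k<n$ and any $k$-subset $I\subseteq\{1,\dots,n\}$. Since $\sum_{i\in I}\sigma(\alpha)_i$ ranges over $k$-subset sums of the multiset of entries of $\alpha$, it is at most $\alpha_1+\cdots+\alpha_k$, and similarly $\sum_{i\in I}\tau(\beta)_i\leq \beta_1+\cdots+\beta_k$. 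Hence
$$
\sum_{i\in I}\gamma_i\leq \sum_{i=1}^k\alpha_i+\sum_{i=1}^k\beta_i=\sum_{i=1}^k(\alpha+\beta)_i.
$$
Choosing $I$ to index the $k$ largest entries of $\gamma$ yields $\sum_{i=1}^k\gamma^{\downarrow}_i\leq\sum_{i=1}^k(\alpha+\beta)_i$. By Rado's theorem applied to $\alpha+\beta$ (the dominating vector) and $\gamma^\downarrow$, the vector $\gamma$ lies in the convex hull of $\{\rho(\alpha+\beta):\rho\in S_n\}$, as required.

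I expect no genuine obstacle: the only nontrivial ingredient is the majorization argument, which is the classical fact that scrambling two weakly decreasing vectors by independent permutations can only decrease the partial sums of the sorted sum. The main thing to be careful about is to invoke Lemma~\ref{lem:convex} in $R[x_1,\dots,x_n]$ itself (which is a convex semiring by construction) rather than in $R$.
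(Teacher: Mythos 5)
Your proof is correct and follows essentially the same route as the paper's: expand the product, get the easy inequality from the diagonal terms $\sigma=\tau$, and for the reverse bound show that each $\sigma(\alpha)+\tau(\beta)$ is majorized by $\alpha+\beta$ and hence lies in the convex hull of the $S_n$-orbit by Rado's theorem, then apply Lemma~\ref{lem:convex}. The only difference is cosmetic: you spell out the majorization (partial-sum) inequalities that the paper asserts without proof, and you pass from $\gamma^\downarrow$ to $\gamma$ via the symmetry of the orbit's convex hull where the paper instead sorts $\gamma$ and appeals to the $S_n$-invariance of $\Tr_{S_n}(x^{\alpha+\beta})$.
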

\begin{proof}
We have
\begin{equation}\label{eq:sigmatau}
\Tr_{S_n}(x^\alpha)\Tr_{S_n}
(x^\beta)=\bigoplus_{\sigma\in S_n}\bigoplus_{\tau\in S_n} x^{\sigma(\alpha)+\tau(\beta)}\geq \bigoplus_{\sigma\in S_n} x^{\sigma(\alpha+\beta)}=\Tr_G(x^{\alpha+\beta}).
\end{equation}
Suppose that $\gamma=\sigma(\alpha)+\tau(\beta)$ for some $\sigma,\tau\in S_n$.
First assume that $\gamma_1\geq \gamma_2\geq \cdots \geq \gamma_n$. 
Then we have $\sum_{i=1}^n(\alpha_i+\beta_i)=\sum_{i=1}^n\gamma_i$ and $\sum_{i=1}^k(\alpha_i+\beta_i)\geq \sum_{i=1}^k \gamma_i$.
By Theorem~\ref{theo:convexhull}, $\gamma$ lies in the convex hull of all $\sigma(\alpha+\beta)$, $\sigma\in S_n$, 
so we have $x^\gamma\leq \Tr_{S_n}(x^{\alpha+\beta})$.
If $\gamma$ is not weakly decreasing, then $\lambda(\gamma)$ is weakly decreasing for some $\lambda\in S_n$ and $x^{\lambda(\gamma)}\leq \Tr_{S_n}(x^{\alpha+\beta})$. By symmetry we have
$x^\gamma=\lambda^{-1}\cdot x^{\lambda(\gamma)}\leq \Tr_{S_n}(x^{\alpha+\beta})$.
Because $x^{\sigma(\alpha)+\tau(\beta)}\leq \Tr_{S_n}(x^{\alpha+\beta})$ for all $\sigma,\tau\in S_n$ we get $\Tr_{S_n}(x^\alpha)\Tr_{S_n}(x^\beta)\leq \Tr_{S_n}(x^{\alpha+\beta})$. Together with~(\ref{eq:sigmatau}) we get equality.
\end{proof}

In \cite{kalivsnik2019} a semiring $R$ is called {\em fully elementary}
if the semiring of $S_n$-invariant polynomials in $n$ variables over a semiring $R$ (viewed as functions $R^n\to R$) is generated by elementary symmetric polynomials. So any commutative ring (with identity) is fully elementary.
Carlsson and Kali\v{s}nik Verov\v{s}ek showed in~\cite{carlsson2016} that the tropical semifield $\T$ is fully elementary. 
Kali{\v{s}}nik and Le{\v{s}}nik analyse in~\cite{kalivsnik2019}
which semirings are fully elementary. They show (see~\cite[Corollary 4.7]{kalivsnik2019}) that an additively idempotent semiring is fully elementary if and only if it is {\em  Frobenius}, i.e., it has the property 
$(a\oplus b)^n=a^n\oplus b^n$ for all $a,b\in R$ and all $n\geq 1$. In this paper we restrict ourselves to semirings $R$ that are convex, i.e., additively idempotent and multiplicatively cancellative. Such semirings are also Frobenius by Lemma~\ref{lem:TropicalishProperties}(e).
In \cite{kalivsnik2019} the authors make a distinction between syntactic polynomial expressions which correspond to elements of $R\{x_1,x_2,\dots,x_n\}$ in our notation, and polynomial functions $R^n\to R$ that can be represented by such syntactic polynomials. 
Both notions of ``polynomial'' are different from our notion of a polynomial as an element in the convex polynomial ring $R[x_1,x_2,\dots,x_n]$. So even though a convex semiring is additively idempotent and Frobenius, the following theorem does not immediately follow from~\cite{kalivsnik2019}:

\begin{theorem}\label{thm:Sninvariants}
    If $R$ is a convex semiring, then the invariant semiring $R[x_1,x_2,\dots,x_n]^{S_n}$ is generated over $R$ by $e_1,e_2,\dots,e_n$.
\end{theorem}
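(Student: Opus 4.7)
The plan is to reduce the theorem to the claim that each orbit sum $\Tr_{S_n}(x^\alpha)$ with weakly decreasing exponent vector $\alpha$ is a monomial in the $e_k$'s, and then to settle that claim by iterated application of Lemma~\ref{lem:ProductTransfers}.

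First I would take an invariant $f \in R[x_1,x_2,\dots,x_n]^{S_n}$ and lift it through the iterated quotient construction $R[x_1,\dots,x_n] = R\{x_1,\dots,x_n\}^\circ$ (built inductively as in the preceding subsection) so that $f = \bigoplus_{\alpha \in A} c_\alpha x^\alpha$ for some finite $A \subseteq \N^n$ and $c_\alpha \in R$. Because $R$ is idempotent, $S_n$-invariance gives $f = \bigoplus_{g \in S_n} g\cdot f = \Tr_{S_n}(f)$ (all $|S_n|$ summands equal $f$ and collapse under $\oplus$). Since $S_n$ acts trivially on the coefficients, the transfer passes through $\oplus$ term by term, so
\[
f \;=\; \Tr_{S_n}(f) \;=\; \bigoplus_{\alpha \in A} c_\alpha\, \Tr_{S_n}(x^\alpha).
\]
It therefore suffices to express each $\Tr_{S_n}(x^\alpha)$ as a product of elementary symmetric polynomials, and by replacing $\alpha$ with a permutation of itself I may assume $\alpha_1 \geq \alpha_2 \geq \cdots \geq \alpha_n$.

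Second, I would identify $e_k$ as a specific transfer. For $\varepsilon_k := (\underbrace{1,\dots,1}_{k},0,\dots,0) \in \N^n$, idempotency collapses the $k!(n-k)!$-fold multiplicities in $\bigoplus_{\sigma \in S_n} x_{\sigma(1)}\cdots x_{\sigma(k)}$ down to the sum over size-$k$ subsets, hence $\Tr_{S_n}(x^{\varepsilon_k}) = e_k$. Every weakly decreasing $\alpha$ admits the rectangle decomposition
\[
\alpha \;=\; \sum_{k=1}^{n} m_k\,\varepsilon_k, \qquad m_k := \alpha_k - \alpha_{k+1} \geq 0,
\]
with $\alpha_{n+1} := 0$. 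Each $\varepsilon_k$ is weakly decreasing, and sums of weakly decreasing vectors are weakly decreasing, so iterating Lemma~\ref{lem:ProductTransfers} yields
\[
\prod_{k=1}^{n} e_k^{m_k} \;=\; \prod_{k=1}^{n} \Tr_{S_n}(x^{\varepsilon_k})^{m_k} \;=\; \Tr_{S_n}\bigl(x^{\sum_{k} m_k \varepsilon_k}\bigr) \;=\; \Tr_{S_n}(x^\alpha),
\]
which is the desired expression.

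The only real obstacle is setup: one must check that the monomial expansion $f = \bigoplus_\alpha c_\alpha x^\alpha$ and the term-wise behavior of $\Tr_{S_n}$ are legitimate inside the convex polynomial semiring, which is a nontrivial quotient of the formal polynomial semiring by $\sim$. This is handled by lifting $f$ to $R\{x_1,\dots,x_n\}$, where monomial expansions are tautological, performing the algebra there (idempotence, $S_n$-equivariance of the transfer, distributivity), and descending along the quotient map; Lemma~\ref{lem:ProductTransfers}, already stated in $R[x_1,\dots,x_n]$, then finishes the multiplicative step. After that, the argument is a clean multiplicative reduction, analogous to writing a partition as a nonnegative integer combination of ``column'' partitions $\varepsilon_k$.
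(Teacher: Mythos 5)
Your proposal is correct and follows essentially the same route as the paper: reduce to transfers $\Tr_{S_n}(x^\alpha)$ of weakly decreasing monomials, identify $e_k=\Tr_{S_n}(x^{\varepsilon_k})$, and apply Lemma~\ref{lem:ProductTransfers} to the column decomposition of $\alpha$. The paper merely phrases the last step as an induction on $|\alpha|$, peeling off one $\varepsilon_i$ at a time, which is your rectangle decomposition unrolled.
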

\begin{proof}
    Let $R[e_1,e_2,\dots,e_n]$ be the sub-semiring generated over $R$ by $e_1,e_2,\dots,e_n$. As an $R$-module, $R[x_1,x_2,\dots,x_n]^G$ is generated by all  $\Tr_{S_n}(x^{\gamma})$ where $\gamma=(\gamma_1,\gamma_2,\dots,\gamma_n)\in \N^n$ satisfies $\alpha_1\geq \alpha_2\geq \cdots \geq \alpha_n\geq 0$.
    By induction on $|\gamma|=\gamma_1+\gamma_2+\cdots+\gamma_n$
    we prove that $\Tr(x^{\gamma})\in R[x_1,x_2,\dots,x_n]$.
    The case $|\gamma|=0$ is clear. Suppose $|\gamma|> 0$.
    If $\gamma_1=\gamma_2=\cdots=\alpha_n=k>0$ then we have $\Tr(x^{\gamma})=e_n^{k}$. Otherwise, there exists an $i$ with $\gamma_i>\gamma_{i+1}$. Define $\beta=(\beta_1,\beta_2,\dots,\beta_n)$ by $\beta_1=\beta_2=\cdots=\beta_i=1$ and $\beta_{i+1}=\beta_{i+2}=\cdots=\beta_{n}=0$ and let $\alpha=(\alpha_1,\alpha_2,\dots,\alpha_n)=\gamma-\beta$.
    Then we have $\alpha_1\geq \alpha_2\geq \cdots \geq \alpha_n\geq 0$, and 
     Lemma~\ref{lem:ProductTransfers} implies that $\Tr(x^\gamma)=\Tr(x^\alpha)\Tr(x^\beta)=\Tr(x^\alpha)e_i$.
    By induction we have $\Tr(x^{\alpha})\in R[e_1,e_2,\dots,e_n]$, so we get $\Tr(x^\gamma)\in R[x_1,x_2,\dots,x_n]$.
\end{proof}
If $R$ is a commutative ring, then the invariant ring $R[x_1,x_2,\dots,x_n]^{S_n}$ is isomorphic to the polynomial ring $R[x_1,x_2,\dots,x_n]$. This is not true in the tropical case:

\begin{proposition}
For $n\geq 2$, $R[x_1,x_2,\dots,x_n]^{S_n}=R[e_1,e_2,\dots,e_n]$ is not isomorphic to $R[x_1,x_2,\dots,x_n]$ as a semiring over $R$.  
\end{proposition}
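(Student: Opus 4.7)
The plan is to distinguish the two semirings by counting homomorphisms from each into the boolean semiring $\B$. Since $R$ is convex (hence a weak domain), the map $\psi\colon R\to \B$ sending ${\bf 0}$ to ${\bf 0}$ and every nonzero element of $R$ to ${\bf 1}$ is a semiring homomorphism with $\psi^{-1}({\bf 0})=\{{\bf 0}\}$. For an $R$-semiring $S$, write $\Hom_{\psi}(S,\B)$ for the set of semiring homomorphisms $S\to\B$ whose restriction to $R$ equals $\psi$. Any $R$-isomorphism $S_{1}\cong S_{2}$ induces a bijection $\Hom_{\psi}(S_{1},\B)\cong \Hom_{\psi}(S_{2},\B)$, so it suffices to show these cardinalities differ for $S_{1}=R[x_{1},\ldots,x_{n}]$ and $S_{2}=R[e_{1},\ldots,e_{n}]$.

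For $S_{1}$, Proposition~\ref{prop:Universal} identifies $\Hom_{\psi}(R[x_{1},\ldots,x_{n}],\B)$ with $\B^{n}$ via $\phi\mapsto(\phi(x_{1}),\ldots,\phi(x_{n}))$, so the count is $2^{n}$. For $S_{2}$, Theorem~\ref{thm:Sninvariants} guarantees that $e_{1},\ldots,e_{n}$ generate over $R$, so $\phi\in \Hom_{\psi}(S_{2},\B)$ is determined by the tuple $(\phi(e_{1}),\ldots,\phi(e_{n}))\in \B^{n}$. The key ingredient I would establish is the inequality
$$e_{j}\leq e_{i}\odot e_{j-i}\qquad (1\leq i<j\leq n).$$
To prove this, observe that every squarefree monomial $x^{v}$ with $v\in\{0,1\}^{n}$ and $|v|=j$ appearing in $e_{j}$ factors as $x^{u}\odot x^{v-u}$ for any size-$i$ subset $u$ of the support of $v$, so $x^{v}$ literally appears as a term in the expansion of $e_{i}\odot e_{j-i}$; by idempotency $x^{v}\leq e_{i}e_{j-i}$, and summing over $v$ yields the claim. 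Applying $\phi$ and using $b^{2}=b$ in $\B$ then gives $\phi(e_{j})\leq \phi(e_{i})\phi(e_{j-i})\leq \phi(e_{i})$, so the tuple $(\phi(e_{1}),\ldots,\phi(e_{n}))$ must be weakly decreasing in $\B$; there are exactly $n+1$ such tuples. Each is realized by restricting to $S_{2}$ the extension of $\psi$ that sends $x_{k}$ to ${\bf 1}$ for $k\leq r$ and to ${\bf 0}$ otherwise, as $r$ ranges over $0,1,\ldots,n$ (then $e_{k}$ evaluates to ${\bf 1}$ exactly when $k\leq r$). Therefore $|\Hom_{\psi}(S_{2},\B)|=n+1$.

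Since $2^{n}>n+1$ for every $n\geq 2$, the two cardinalities disagree, and no $R$-isomorphism between $S_{1}$ and $S_{2}$ can exist. The most delicate step is the inequality $e_{j}\leq e_{i}e_{j-i}$: one must work in the convex polynomial semiring, where literal appearance of $x^{v}$ in the expansion combined with idempotency yields ``$\leq$'' for free (and Lemma~\ref{lem:convex} offers a convex-hull alternative), rather than treating the $e_{i}$ as formal polynomials, since Frobenius-type identities such as $e_{1}^{k}=x_{1}^{k}\oplus\cdots\oplus x_{n}^{k}$ can obscure the combinatorics if approached naively.
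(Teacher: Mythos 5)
Your proof is correct, and it takes a genuinely different route from the paper. The paper argues by contradiction with Newton polytopes: assuming an $R$-isomorphism $\phi$ with $\phi(x_i)=y_i$, it shows that some $y_i$ dominates $d\,e_1^{\ell}$, deduces that $y_2$ is bounded above by an expression in $y_1$ alone, and pulls this back to the absurd inequality $d^m x_2\leq cd^m\oplus c x_1^m$. You instead compute an isomorphism invariant: the number of semiring homomorphisms into $\B$ extending the support map $\psi$. All the ingredients you cite are available in the paper — $\psi$ is exactly the map underlying Definition~\ref{def:NP}, Proposition~\ref{prop:Universal} gives the count $2^n$ for the free side, and your inequality $e_j\leq e_i\odot e_{j-i}$ follows, as you say, from literal appearance of each squarefree term $x^v$ in the expansion of $e_i\odot e_{j-i}$ together with Lemma~\ref{lem:idempotentproperties}(a); monotonicity of homomorphisms of idempotent semirings then forces $(\phi(e_1),\ldots,\phi(e_n))$ to be weakly decreasing, giving at most $n+1$ homomorphisms on the invariant side (the realizability of all $n+1$ is a nice bonus but not needed). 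Your argument is shorter, avoids the polytope machinery entirely, and is robust even if ``isomorphism over $R$'' were relaxed to an isomorphism inducing an automorphism of $R$, since any such automorphism commutes with $\psi$; what it does not exhibit, and the paper's proof does, is a concrete order-theoretic obstruction inside the semirings themselves.
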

\begin{proof}
    Suppose $\phi:R[x_1,x_2,\dots,x_n]\to R[e_1,e_2,\dots,e_n]$
    is an isomorphism over $R$. Let $y_i=\phi(x_i)\in R[e_1,e_2,\dots,e_n]\subseteq R[x_1,x_2,\dots,x_n]$, so $R[e_1,e_2,\dots,e_n]=R[y_1,y_2,\dots,y_n]$.
    We can write $e_1=\bigoplus_{\alpha\in S} c_\alpha y^{\alpha}$, where $S\subseteq \N^n$ is a finite subset and $c_\alpha\in R\setminus \{0\}$ for all $\alpha\in S$. 
We now use the map $\Pi:R[x_1,x_2,\dots,x_n]\to \PP_n$ from Definition~\ref{def:NP} that sends
a polynomial $f$ to its Newton polytope $\Pi(f)\in \PP_n$.
We have
$$\{(1,0,\dots,0)\}=\Pi(x_1)\subseteq \conv\left(\bigcup_{\alpha\in S} \Pi(y^{\alpha})\right)
$$
There must be a positive integer $k$ and an element $\alpha=(\alpha_1,\alpha_2,\dots,\alpha_n)\in S$ with
$$
(k,0,0,\dots,0)\in \Pi(y^\alpha)=\bigoplus_{i=1}^n \alpha_i\Pi(y_i).
$$
From this follows that there exists a positive integer $\ell$ and an $i\in \{1,2,\dots,n\}$ such that
$(\ell,0,0,\dots,0)\in \Pi(y_i)$.
Without loss of generality, assume that $i=1$.
We have 
$dx_1^\ell\leq y_1$ for some $d\in R\setminus\{0\}$. It follows that $de_1^\ell=d\Tr(x_1^\ell)\leq \Tr(y_1)=y_1$.
    Suppose that $y_2\in R[x_1,x_2,\dots,x_n]$ has degree $\leq m$ as a polynomial in $x_1,x_2,\dots,x_n$. 
    Then  we have 
$$y_2\leq c({\bf 1}\oplus x_1\oplus x_2\oplus \cdots\oplus x_n)^m=c({\bf 1}\oplus e_1)^m\leq c({\bf 1}\oplus e_1)^{\ell m}=c\oplus ce_1^{\ell m}
$$ 
    for some $c\in R\setminus\{0\}$.  
It follows that 
$$d^m y_2\leq cd^m\oplus cd^m e_1^{\ell m}=cd^m\oplus cy_1^m.
$$
    Because $\phi$ is an isomorphism, we have $d^m x_2\leq cd^m\oplus cx_1^m$.
    This is a contradiction.
\end{proof}
\begin{lemma}\label{lem:productgroup}
    Suppose $G$ and $H$ are groups that act on the convex polynomial rings $S=R[x_1,x_2,\dots,x_n]$ and $T=R[y_1,y_2,\dots,y_m]$ respectively by permuting the variables. Now $G\times H$ acts on the semiring  $A=R[x_1,x_2,\dots,x_n,y_1,y_2,\dots,y_n]$.
    Then 
    $A^{G\times H}$ is generated by $S^G$ and $T^H$. 
\end{lemma}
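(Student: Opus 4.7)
The plan is to apply the transfer map $\Tr_{G\times H}$ to an arbitrary invariant and observe that it splits as a product of a transfer over $G$ and a transfer over $H$. The first step is to express an arbitrary element $f\in A$ as a finite sum of monomials
$$f=\bigoplus_{i=1}^r c_i\, x^{\alpha_i} y^{\beta_i},\qquad c_i\in R,\ \alpha_i\in\N^n,\ \beta_i\in \N^m.$$
This is available because $A=R[x_1,\dots,x_{n+m}]$ is defined inductively as $R[x_1,\dots,x_{n+m-1}][x_{n+m}]$, and by construction $R\{z\}^\circ$ is generated over $R$ by the image of $z$; so by iteration $A$ is generated over $R$ by the variables $x_i$ and $y_j$, and every element is a finite sum of $R$-scalar monomials in them.

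Next I would compute the transfer map on a bivariate monomial. Since $G$ acts only on the $x$-variables and $H$ only on the $y$-variables, the actions commute and we can factor
$$
\Tr_{G\times H}(x^\alpha y^\beta)=\bigoplus_{(g,h)\in G\times H} x^{g\alpha} y^{h\beta}=\Big(\bigoplus_{g\in G} x^{g\alpha}\Big)\Big(\bigoplus_{h\in H} y^{h\beta}\Big)=\Tr_G(x^\alpha)\,\Tr_H(y^\beta).
$$
In particular, each such transfer already lies in the sub-semiring generated by $S^G$ and $T^H$ inside $A$.

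The third step is to apply $\Tr_{G\times H}$ to an invariant $f\in A^{G\times H}$. Because the semiring is idempotent and every summand $(g,h)\cdot f$ equals $f$, the transfer acts as the identity on $f$: $\Tr_{G\times H}(f)=f$. Using that $\Tr_{G\times H}$ is additive and $R$-linear (the coefficients $c_i\in R$ are fixed by the whole group), combining with the computation above yields
$$
f=\Tr_{G\times H}(f)=\bigoplus_{i=1}^r c_i\,\Tr_G(x^{\alpha_i})\,\Tr_H(y^{\beta_i}).
$$
Each factor $\Tr_G(x^{\alpha_i})$ lies in $S^G$, each $\Tr_H(y^{\beta_i})$ lies in $T^H$, and $c_i\in R\subseteq S^G$, so $f$ sits in the sub-semiring of $A$ generated by $S^G$ and $T^H$, as required.

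There is no real obstacle beyond making the monomial expansion of step one rigorous from the inductive definition of the convex polynomial ring; once that is in hand, the rest is a one-line computation exploiting that $\Tr_{G\times H}=\Tr_G\circ \Tr_H$ splits multiplicatively on monomials in disjoint variable sets, together with the idempotent identity $\Tr_{G\times H}(f)=f$ for invariants.
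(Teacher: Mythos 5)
Your proof is correct and takes essentially the same route as the paper: the paper computes $\Tr_{G\times H}(ST)=\Tr_G(\Tr_H(ST))=\Tr_G(S)T^H=S^GT^H$ for general elements of $S$ and $T$, while you carry out the identical computation at the level of monomials. Both arguments rest on the same three facts — the transfer is the identity on invariants, it factors as $\Tr_G\circ\Tr_H$, and invariant factors pull out of it multiplicatively.
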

\begin{proof}
The group $H$ acts trivially on $S$ and $G$ acts trivially of $T$.  We have 
$$\Tr_{G\times H}(ST)=\Tr_G(\Tr_H(ST))=
\Tr_G(S\Tr_H(T))=\Tr_G(ST^H)=\Tr_G(S)T^H=S^GT^H.
$$
\end{proof}

If a subgroup $G\subseteq S_n$ is generated by $2$-cycles, then it is a product of symmetric groups. To be more precise, suppose that $G$ is generated by the 2-cycles $(i_k,j_k)$, $k=1,2,\dots,r$. One can draw a graph on the vertex set $\{1,2,\dots,n\}$ by drawing an edge between $i_k$ and $j_k$. Let $A_1,A_2,\dots,A_\ell\subseteq \{1,2,\dots,n\}$ be the connected components of this graph. Then we have $G=\Sym(A_1)\times \Sym(A_2)\times \cdots \Sym(A_\ell)$.
\begin{corollary}\label{cor:FinitelyGenerated}
    If $G\subseteq S_n$ is generated by $2$-cycles, then $R[x_1,x_2,\dots,x_n]^G$ is finitely generated over $R$.
\end{corollary}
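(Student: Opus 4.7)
The plan is to combine the two results that immediately precede the statement. As noted just before the corollary, a subgroup $G \subseteq S_n$ generated by $2$-cycles decomposes as $G = \Sym(A_1) \times \Sym(A_2) \times \cdots \times \Sym(A_\ell)$, where $A_1, \ldots, A_\ell$ are the connected components of the graph on $\{1,\ldots,n\}$ whose edges are the generating $2$-cycles. My strategy is therefore to handle each block $\Sym(A_i)$ individually with Theorem~\ref{thm:Sninvariants} and then glue the blocks together using Lemma~\ref{lem:productgroup}.

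Concretely, first I would regroup variables so that $R[x_1,\ldots,x_n]$ is written as the convex polynomial ring in the blocks $\{x_j : j \in A_i\}$, iterated according to the recursive definition $R[x_1,\ldots,x_n] := R[x_1,\ldots,x_{n-1}][x_n]$. Under this identification, the $G$-action decomposes as a product of group actions on disjoint blocks of variables: $\Sym(A_i)$ permutes only the variables indexed by $A_i$ and acts trivially on the other blocks. For each $i$, Theorem~\ref{thm:Sninvariants} applied to $R[\{x_j : j \in A_i\}]$ then shows that the invariant semiring under $\Sym(A_i)$ is generated over $R$ by the $|A_i|$ elementary symmetric polynomials in the variables of the $i$-th block, yielding a finite generating set for each factor.

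Finally, I would run an induction on $\ell$ using Lemma~\ref{lem:productgroup} to combine these $\ell$ finite generating sets into a single finite generating set for $R[x_1,\ldots,x_n]^G$: the lemma handles two factors at a time, so splitting $G = \Sym(A_1) \times \bigl(\Sym(A_2) \times \cdots \times \Sym(A_\ell)\bigr)$ and invoking the inductive hypothesis on the right factor closes the induction. I do not foresee a substantive obstacle, since all ingredients are already in place; the only care required is in the bookkeeping of the iterated product and in verifying that Lemma~\ref{lem:productgroup}, stated for two factors, extends to an arbitrary finite product, which is a routine induction.
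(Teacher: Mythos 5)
Your proposal is correct and is essentially the paper's own argument: the paper's proof is exactly "induction using Theorem~\ref{thm:Sninvariants} and Lemma~\ref{lem:productgroup}," applied to the decomposition $G=\Sym(A_1)\times\cdots\times\Sym(A_\ell)$ noted just before the corollary. Your extra bookkeeping about regrouping variables and inducting on the number of blocks is a harmless elaboration of the same route.
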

\begin{proof}
    This follows by induction on $n$ using Theorem~\ref{thm:Sninvariants} and Lemma~\ref{lem:productgroup}.
\end{proof}
We will see in the next section that the converse of the Corollary is also true.

\subsection{Permutation group invariants}

\begin{theorem}\label{theo:NotFinitelyGenerated}
    Suppose that $G\subseteq S_n$ is a subgroup that is not generated by $2$-cycles and $R$ is a convex semiring. Then $R[x_1,x_2,\dots,x_n]^G$ is not finitely generated over $R$.
\end{theorem}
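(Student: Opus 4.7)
My approach is to reduce the problem to the Boolean semiring via the Newton polytope map and then exhibit infinitely many pairwise distinct $G$-invariant lattice polytopes that are indecomposable in the $G$-invariant sub-semiring. By Definition~\ref{def:NP} the Newton polytope map $\Pi\colon R[x_1,\ldots,x_n]\to \B[x_1,\ldots,x_n]$ is a $G$-equivariant surjective semiring homomorphism, so Lemma~\ref{lem:surjectiveInvariants} gives $\Pi(R[x_1,\ldots,x_n]^G)=\B[x_1,\ldots,x_n]^G$. A finite generating set for $R[x_1,\ldots,x_n]^G$ over $R$ would push forward to a finite generating set for $\B[x_1,\ldots,x_n]^G$ over $\B$, so it suffices to show the latter is not finitely generated. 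Under the identification of $\B[x_1,\ldots,x_n]$ with the semiring of convex hulls of finite subsets of $\N^n$ (with $\oplus$ the convex hull of union and $\odot$ the Minkowski sum), the goal is to produce pairwise distinct $G$-invariant lattice polytopes $P_1,P_2,\ldots$ that are \emph{indecomposable} in the $G$-invariant sub-semiring, meaning that $P_k=A\odot B$ or $P_k=A\oplus B$ with $G$-invariant lattice polytopes $A,B$ forces $A$ or $B$ to equal $\{{\bf 0}\}$; given such a family, any hypothetical finite generating list $Q_1,\ldots,Q_s$ would force each $P_k$ to coincide with some $Q_j$, contradicting $s<\infty$.

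Let $A_1,\ldots,A_\ell$ be the $G$-orbits on $\{1,\ldots,n\}$ and $H=\prod_i\Sym(A_i)$. The remark preceding Corollary~\ref{cor:FinitelyGenerated} shows that the hypothesis on $G$ is equivalent to $G\subsetneq H$. If $G|_{A_i}\subsetneq\Sym(A_i)$ for some $i$, sending $x_j\mapsto{\bf 1}$ for all $j\notin A_i$ is a $G$-equivariant semiring surjection $R[x_1,\ldots,x_n]\to R[x_j:j\in A_i]$, and Lemma~\ref{lem:surjectiveInvariants} reduces finite generation to the transitive version of the theorem for $G|_{A_i}\subsetneq\Sym(A_i)$. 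So the essential cases to handle directly are: (a) $G\subsetneq S_n$ acts transitively; and (b) $G|_{A_i}=\Sym(A_i)$ for every $i$, but $G\subsetneq H$. In the clean sub-case of (b) where some ordered pair $(i,j)$ has $G$-orbit strictly contained in its $H$-orbit, the invariants $f_k=\Tr_G(x_i\odot x_j^{\odot k})$ suffice: $P_k=\conv(G\cdot(e_i+ke_j))$ is a line segment whose only $\N^n$-lattice points are its vertices. Lattice constraints together with $G$-invariance force any Minkowski decomposition to be trivial; $\oplus$-indecomposability is immediate because the vertex set is a single $G$-orbit (any $G$-invariant sub-union must contain either all or none of it); and distinct $k$ produce segments in distinct directions, giving the required infinite family. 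This is a direct specialization of the Carlsson--Kali\v{s}nik argument from \cite[Proposition~5.9]{carlsson2016}.

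For the transitive case (a) and the remaining sub-cases of (b), I would use a generic one-parameter family of exponents chosen so that every coordinate attains $0$ within each $G$-orbit under the action---for instance $\alpha_k=(k,0,1,2,\ldots,n-2)\in\N^n$ for $k\geq n-1$---and set $P_k=\conv(G\cdot\alpha_k)$. Distinct $k$ give distinct $P_k$ (they differ at the distinguished large coordinate), and $\oplus$-indecomposability again holds because the vertex set is a single $G$-orbit. The presence of the coordinate $0$ and transitivity of $G|_{A_i}$ on each orbit guarantee that the only $G$-fixed lattice-point Minkowski summand is $\{{\bf 0}\}$. The main obstacle---and the most delicate step of the entire proof---is ruling out \emph{nontrivial higher-dimensional} $G$-invariant Minkowski summands of $P_k$: one must exploit the asymmetry introduced by the distinguished coordinate $k$ to show that no proper $G$-equivariant Minkowski decomposition respects simultaneously the lattice $\N^n$ and the $G$-symmetry. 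This is a polyhedral--combinatorial analysis of the orbit polytope that is essentially trivial in the segment case, straightforward when $P_k$ is forced to be a simplex (e.g.\ for cyclic transitive $G$ acting on $n$ points), but genuinely delicate for arbitrary transitive proper subgroups $G\subsetneq S_n$ and for diagonal subgroups with highly connected pair-action. Once indecomposability is established in both cases, non-finite-generation of $R[x_1,\ldots,x_n]^G$ follows from the contradiction sketched in the first paragraph.
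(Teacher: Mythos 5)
Your reduction to $\B$ via the Newton polytope map and Lemma~\ref{lem:surjectiveInvariants} is exactly the paper's first step, and your observation that $\oplus$-decompositions of an orbit polytope are trivial because its vertex set is a single $G$-orbit also matches the paper. But the heart of your argument --- that the orbit polytopes $P_k$ admit no nontrivial Minkowski decomposition into $G$-invariant lattice polytopes --- is precisely the step you leave unproven, and you say yourself it is ``genuinely delicate for arbitrary transitive proper subgroups.'' That is a genuine gap, not a routine verification: orbit polytopes of proper subgroups can perfectly well be Minkowski-decomposable (already a regular hexagon is a sum of two triangles), so indecomposability of each individual $P_k$ is a fragile and possibly false target. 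Your ``clean sub-case'' is also not correct as stated: $\conv(G\cdot(e_i+ke_j))$ is a line segment only when the $G$-orbit of the ordered pair $(i,j)$ has exactly two elements; in general it is a higher-dimensional polytope, so the segment-based lattice argument does not apply.

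The paper avoids indecomposability entirely. After reducing to $\B$ and observing (as you do) that $\Tr_G(\{\alpha\})$ must equal a single Minkowski monomial $\beta_1P_1+\cdots+\beta_rP_r$ in any hypothetical finite generating set, it uses the fact that every edge direction of a Minkowski sum is an edge direction of one of the summands --- no indecomposability needed. Letting $N\subseteq G$ be the subgroup generated by the $2$-cycles of $G$ (so $N\neq G$ by hypothesis), it shows first that for $\alpha$ with strictly decreasing coordinates the orbit polytope has an edge from $\alpha$ to $\sigma(\alpha)$ for some $\sigma\in G\setminus N$ (connectivity of the $1$-skeleton plus $G$-symmetry), and second that because $\ker(1-\sigma)$ has codimension at least $2$ for every such $\sigma$, one can choose infinitely many $\alpha$ whose resulting edge directions $\alpha-\sigma(\alpha)$ are pairwise non-parallel. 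Finitely many generators supply only finitely many edge directions, giving the contradiction. If you want to complete your proof, replace the indecomposability claim with this edge-direction counting; as written, your argument does not go through.
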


\begin{proof}
Consider the map $\Pi:R[x_1,x_2,\dots,x_n]\to \B[x_1,x_2,\dots,x_n]\subset \PP_n$ from Definition~\ref{def:NP} that maps
$f\in R[x_1,x_2,\dots,x_n]$ to its Newton polytope. 
By Lemma~\ref{lem:surjectiveInvariants}, we 
 we have $\B[x_1,x_2,\dots,x_n]^G=\Pi(R[x_1,x_2,\dots,x_n]^G)$.
If $R[x_1,x_2,\dots,x_n]^G$ is finitely generated over $R$, then $\B[x_1,x_2,\dots,x_n]^G=\Pi(R[x_1,x_2,\dots,x_n]^G)$ is finitely generated over $\Pi(R)=\B$. So we can reduce the theorem to the case where $R=\B$. We view
$\B[x_1,x_2,\dots,x_n]$ as a sub-semiring of $\PP_n$ consisting of convex hulls of finite subsets of $\N^n$,
where the monomial $x^\alpha$ is dentified with the convex set $\{\alpha\}\subseteq \R^n$. 
For $\alpha\in \N^n$,  
$\Tr_G(\{\alpha\})$ is the convex hull of all $\sigma(\alpha)$, $\sigma\in G$. Define
$$\Delta=\{(\alpha_1,\alpha_2,\dots,\alpha_n)\in \N^n \mid \alpha_1>\alpha_2>\cdots>\alpha_n\geq 0\}.
$$
$N$ be the group generated by all $2$-cycles in $G$ and assume $N\neq G$.
Then $N$ is a normal subgroup of $G$.\\[5pt]
{\bf Claim 1:} If $\alpha\in \Delta$ then there exists a group element $\sigma\in G\setminus N$ such that there is an edge between $\alpha$ and $\sigma(\alpha)$ in the  polytope $A=\Tr_G(\{\alpha\})$.\\[5pt]
Because the $1$-skeleton of $A$ is connected, 
there must be an edge of $A$ between a vertex 
$\tau(\alpha)$ with $\tau\in N$
and another vertex $\lambda(\alpha)$ with $\lambda\in G\setminus N$.
If we set $\sigma=\tau^{-1}\lambda\in G\setminus N$,
then by the $G$-symmetry there is also an edge between
$\alpha$ and $\tau^{-1}(\lambda(\alpha))=\sigma(\alpha)$. This proves Claim~1.\\[5pt]
{\bf Claim 2:} There exists an infinite subset $S\subseteq \Delta\subseteq\N^n$  such that for all $\sigma\in G\setminus N$ and for every $\alpha,\beta\in S$
with $\alpha\neq \beta$, the two vectors $\alpha-\sigma(\alpha)$ and $\beta-\sigma(\beta)$ are linearly independent.\\[5pt]
We take $S=\{\alpha^{(1)},\alpha^{(2)},\dots\}$ where the  sequence of vectors $\alpha^{(1)},\alpha^{(2)},\alpha^{(3)},\dots\in \N^n$ is constructed as follows as follows.
Let $$T=\bigcup_{\sigma \in G\setminus N} \ker(1-\sigma)\subseteq \R^n.$$
Every element $\sigma\in G\setminus N$ is not a $2$-cycle, so $\ker(1-\sigma)$ has dimension $\leq n-2$.
We choose $\alpha^{(1)}\in \Delta\setminus T$. 
Suppose we have already chosen $\alpha^{(1)},\alpha^{(2)},\dots,\alpha^{(i-1)}$.
Consider the set $U^{(i)}=\bigcup_{j=1}^{i-1} \R \alpha^{(j)}+T$.
Then $U^{(i)}$ is a finite union of subspaces of dimension $\leq n-1$. The set $\Delta$ is not contained in a finite union of subspaces of dimension $\leq n-1$ so we can choose $\alpha^{(i)}\in \Delta\setminus U^{(i)}$.
Suppose that $\alpha^{(i)}-\sigma(\alpha^{(i)})$ and $\alpha^{(j)}-\sigma(\alpha^{(j)})$
are linearly dependent for some $\sigma\in G\setminus N$ and $j<i$.
Say, $\alpha^{(i)}-\sigma(\alpha^{(i)})=\lambda(\alpha^{(j)}-\sigma(\alpha^{(j)}))$. Then we have $\alpha^{(i)}-\lambda\alpha^{(j)}\in \ker(1-\sigma)$, so $\alpha^{(i)}\in \R\alpha^{(j)}+T\subseteq U^{(i)}$. Contradiction. So the set $T=\{\alpha^{(1)},\alpha^{(2)},\dots\}$ has the desired properties and Claim~2 has been proved.\\[5pt]

Suppose $P_1,P_2,\dots,P_r\in \B[x_1,x_2,\dots,x_n]$
generate $\B[x_1,x_2,\dots,x_n]^G$.
Let
$\alpha\in \Delta$.
If $\Tr_G(\{\alpha\})=B\oplus C=\conv(B\cup C)$ with $B,C\in \B[x_1,x_2,\dots,x_n]^G$, then $\alpha$ is vertex of $B$ or of $C$.
This proves that $\Tr_G(\{\alpha\})\subseteq  B$
or $\Tr_G(\{\alpha\})\subseteq C$ and therefore
$\Tr_G(\{\alpha\})=B$ or $\Tr_G(\{\alpha\})=C$.
Because $P_1,P_2,\dots,P_r$ generate $\B[x_1,x_2,\dots,x_n]^G$,
we can write 
$\Tr(\{\alpha\})=M_1\oplus M_2\oplus\cdots\oplus M_s$,
where $M_1,M_2,\dots,M_s$ are monomials in $P_1,P_2,\dots,P_r$. By induction on $s$ we see that $\Tr(\{\alpha\})$ is equal to $M_j$ for some $j$.
We can write 
\begin{equation}\label{eq:PolytopeSum}
\Tr(\{\alpha\})=P_1^{\odot \beta_1}\odot P_2^{\odot\beta_2}\odot \cdots\odot P_r^{\odot\beta_r}=\beta_1 P_1+\beta_2P_2+\cdots +\beta_rP_r
\end{equation}
for some $\beta_1,\beta_2,\dots,\beta_r\in \N$.

Let $D$ be the set of all unit vectors $(\alpha-\beta)/\|\alpha-\beta\|$ where $\alpha$ and $\beta$  are adjacent vertices of a polytope $P_j$ for some $j$.
For $\alpha\in S$,  $\Tr_G(\{\alpha\})$ has an edge between $\alpha$ and $\sigma_\alpha(\alpha)$ for some permutation $\sigma_\alpha\in G\setminus N$ by Claim~1.
From~(\ref{eq:PolytopeSum}) follows that $\alpha-\sigma_\alpha(\alpha)$ is parallel to an edge of $P_j$ for some $j$, so  $(\alpha-\sigma_\alpha(\alpha))/\|\alpha-\sigma_\alpha(\alpha)\|\in D$.
There exists a $\tau$ such that $\sigma_\alpha=\tau$ for infinitely many $\alpha\in S$. So we get that $(\alpha-\tau(\alpha))/\|\alpha-\tau(\alpha)\|\in D$
for infinitely many $\alpha\in S$. By Claim~2,  all these vectors are distinct. On the other hand, $D$ is finite so we get a contradiction. We conclude that $\B[x_1,x_2,\dots,x_n]^G$ is not finitely generated.
\end{proof}

\section{Tropical rational invariants}\label{sec:Rational}
\subsection{Tropical function semifields and L\"uroth's problem}

If $\F$ is a field, then any intermediate field
$\F\subseteq L\subseteq \F(x_1,x_2,\dots,x_n)$ is finitely generated over $\F$. An analog of this property for idempotent semifields is not true as the following proposition shows:
\begin{proposition}
    Let $P_i={\bf 1}\oplus x_1x_2^i=\conv((0,0),(1,i))\in \B[x_1,x_2]\subset \B(x_1,x_2)$. Then the semifield
    $L=\B(P_1,P_2,P_3,\dots)$ is not finitely generated.
\end{proposition}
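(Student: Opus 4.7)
The plan is to argue by contradiction. If $L=\B(P_1,P_2,\dots)$ were finitely generated as a semifield over $\B$, then since any finite semifield expression involves only finitely many of the $P_i$'s, there would exist a finite set $S\subset\N$ such that $L=\B(P_i:i\in S)$. Setting $N=\max S$, it suffices to show that $P_{N+1}\notin\B(P_i:i\in S)$, which would directly contradict $P_{N+1}\in L$.

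To separate $P_{N+1}$ from this sub-semifield, I would build a semifield homomorphism $\phi:\B(x_1,x_2)\to\T$ that sends every $P_i$ with $i\le N$ to ${\bf 1}_\T$ but sends $P_{N+1}$ to a different element. Starting from the inclusion $\B\hookrightarrow\T$ that maps ${\bf 0}\mapsto-\infty$ and ${\bf 1}\mapsto 0$, Proposition~\ref{prop:Universal} furnishes a unique homomorphism $\widehat\phi:\B[x_1,x_2]\to\T$ with $\widehat\phi(x_1)=-(N+\tfrac{1}{2})$ and $\widehat\phi(x_2)=1$. Since both images are nonzero in $\T$, Lemma~\ref{lem:extendAIMC}(b) gives $\widehat\phi^{-1}({\bf 0})=\{{\bf 0}\}$, so $\widehat\phi$ extends uniquely to a semifield homomorphism $\phi:\B(x_1,x_2)\to\T$ via the universal property of the quotient semifield $Q(\B[x_1,x_2])=\B(x_1,x_2)$. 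Direct computation gives $\phi(P_i)=\phi({\bf 1}\oplus x_1x_2^i)=\max(0,\,i-N-\tfrac{1}{2})$ in $\T$, which equals $0={\bf 1}_\T$ for every $i\le N$ and equals $\tfrac{1}{2}$ for $i=N+1$.

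To finish, I would observe that the image $\phi(\B(P_i:i\in S))$ is contained in the sub-semifield of $\T$ generated over $\phi(\B)=\{-\infty,0\}$ by $\{\phi(P_i):i\in S\}=\{0\}$. Under the tropical operations $\max,+,-$, any expression built solely from $-\infty$ and $0$ evaluates to $-\infty$ or $0$, so this sub-semifield is exactly $\B$ (embedded in $\T$). Since $\phi(P_{N+1})=\tfrac{1}{2}\notin\{-\infty,0\}$, we conclude $P_{N+1}\notin\B(P_i:i\in S)$, which is the desired contradiction.

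I do not foresee a serious obstacle; the crux is to exploit the specific form $P_i={\bf 1}\oplus x_1x_2^i$, which under any substitution $x_1\mapsto a,\,x_2\mapsto b$ in $\T$ evaluates to $\max(0,\,a+ib)$. This yields a family of linear thresholds in $i$ that can be tuned to collapse onto ${\bf 1}_\T$ simultaneously for any prescribed finite set of indices while remaining distinguishable from ${\bf 1}_\T$ on the next index. The only mild technical point is checking that the semifield extension $\phi$ exists, which follows directly from the cited universal properties once one notes that $\widehat\phi(x_1),\widehat\phi(x_2)\in\R$ are nonzero in $\T$.
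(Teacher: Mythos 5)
Your argument is correct, and it reaches the contradiction by a different mechanism than the paper. The paper works intrinsically with Newton polytopes in $\PP_2$: it defines $\slope(A)$ as the least $r$ with $A$ below the line $y=rx$, proves $\slope(A\oplus B)=\slope(A\odot B)=\max\{\slope(A),\slope(B)\}$, concludes that every nonconstant element of $\B[P_1,\dots,P_k]$ has slope $\leq k$, and then derives the contradiction from $A=P_{k+1}\odot B$. You instead build a specialization homomorphism $\phi:\B(x_1,x_2)\to\T$ with $\phi(x_1)=-(N+\tfrac12)$, $\phi(x_2)=1$, so that $\phi$ collapses $\B(P_i:i\in S)$ onto the copy of $\B$ inside $\T$ while $\phi(P_{N+1})=\tfrac12$ escapes it. The two arguments are dual to one another: your evaluation computes the support function of the Newton polytope in the direction $(-(N+\tfrac12),1)$, and $\phi(A)\leq 0$ is exactly the condition $\slope(A)\leq N+\tfrac12$; but your packaging as a semifield homomorphism lets you quote the universal properties (Proposition~\ref{prop:Universal}, Lemma~\ref{lem:extendAIMC}(b), and the universal property of $Q(-)$) and avoid verifying the slope identities by hand, at the cost of having to check that the evaluation map exists and has trivial preimage of ${\bf 0}$ --- which you do correctly. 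One stylistic bonus of the paper's version is that the bound ``slope $\leq k$'' is uniform over all of $\B[P_1,\dots,P_k]$ at once, whereas your functional must be re-tuned to each finite generating set; but since the contradiction only requires excluding one element, this makes no difference to the validity of the proof.
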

\begin{proof}
    For a non-constant polynomial $A\in \B[x_1,x_2]\subset \PP_2$
    we define $\slope(A)$ as the smallest real number $r$ for which the convex set $A$ lies below the line $y=rx$. If $A$ contains a point $(0,a)$ with $a>0$ then $\slope(A)$ is equal to $\infty$. We have $\slope(A\oplus B)=\slope(\conv(A\cup B))=\max\{\slope(A),\slope(B)\}$ and $\slope(A\odot B)=\slope(A+B)=\max\{\slope(A),\slope(B)\}$, $\slope(A\oplus {\bf 1})=\slope(A)$ if $A$ and $B$ are nonempty. We have $\slope(P_i)=i$ for all $i$.  By induction it is easy to see that $\slope(A)\leq k$ for all nonconstant $A\in \B[P_1,P_2,\dots,P_k]$.

    Suppose that $L$ is finitely generated by $Q_1,Q_2,\dots,Q_s\in \B(x_1,x_2)$. Then there exists a positive integer $k$ such that
   $Q_i\in \B(P_1,P_2,\dots,P_k)$ for $i=1,2,\dots,s$. It follows that
 $$L=\B(Q_1,Q_2,\dots,Q_s)\subseteq \B(P_1,P_2,\dots,P_k)\subseteq L.$$
Therefore $L=\B(P_1,P_2,\dots,P_k)$.
Because $P_{k+1}\in L=\B(P_1,P_2,\dots,P_k)$ there exist nonconstant $A,B\in \B[P_1,P_2,\dots,P_k]$ with $P_{k+1}=A\oslash B$. 
Since $A=P_{k+1}\odot B$ we get
$$
k\geq\slope(A)=\max\{\slope(P_{k+1}),\slope(B)\}=
\max\{k+1,\slope(B)\}=k+1.
$$
Contradiction! Hence $L$ is not finitely generated over $\B$.
\end{proof}

\begin{proposition}
    Define $h_i={\bf 1}\oplus (-1)x^i\in \T(x)$, for $i=1,2,\dots$. Then the semifield $L=\T(h_1,h_2,\dots)$ is not finitely generated. 
    \end{proposition}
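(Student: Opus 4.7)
The strategy adapts the preceding proposition but replaces the \emph{slope} invariant by a cruder device: restriction to a short open interval on which every $h_i$ with $i\leq k$ collapses to the multiplicative identity ${\bf 1}=0\in\T$, while $h_{k+1}$ remains non-constant. The key elementary observation is that, viewed as a function $\R\to\R$, $h_i(x)=\max\{0,\,-1+ix\}$ equals $\bf 1$ precisely when $x\leq 1/i$.

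I would first carry out the usual reduction: if $L$ were finitely generated over $\T$ by some $Q_1,\dots,Q_s$, then each $Q_r$ would lie in $L_k:=\T(h_1,\dots,h_k)$ for some $k$ depending on $r$, since it involves only finitely many of the $h_i$. Taking $k$ to be the maximum of these indices then gives $L\subseteq L_k$. It therefore suffices to show that $h_{k+1}\notin L_k$ for every $k\geq 1$; the case $k=0$ is trivial because $L_0=\T$ contains only constant functions.

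For this, fix the open interval $I:=(1/(k+1),\,1/k)\subset\R$. Since $x<1/k\leq 1/i$ for every $1\leq i\leq k$ and every $x\in I$, we have $h_i|_I\equiv{\bf 1}$. For any fixed $x_0\in I$, evaluation at $x_0$ defines a semifield homomorphism $\mathrm{ev}_{x_0}\colon\T(x)\to\T$ which sends each $h_i$ with $i\leq k$ to $\bf 1$ and fixes each constant $c\in\T$, and hence $\mathrm{ev}_{x_0}|_{L_k}$ does not depend on the chosen point $x_0\in I$. In other words, every $f\in L_k$ is constant on $I$. On the other hand $h_{k+1}|_I(x)=(k+1)x-1$ is strictly increasing and therefore non-constant, producing the desired contradiction. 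The only point that requires a word is the well-definedness of $\mathrm{ev}_{x_0}$ on rational functions; this is harmless because every nonzero element of $\T[x]$ is a maximum of affine functions with real coefficients and is therefore finite-valued everywhere on $\R$, so no division by $\bf 0$ can ever occur. There is essentially no obstacle beyond this bookkeeping.
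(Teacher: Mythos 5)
Your proof is correct and follows essentially the same route as the paper: both arguments rest on the observation that $h_1,\dots,h_k$ are all constant on an interval to the left of $1/k$, so every element of $\T(h_1,\dots,h_k)$ is constant there, while $h_{k+1}$ is not. Your evaluation-homomorphism phrasing is a slightly more careful way of justifying that constancy propagates through tropical division as well as sums and maxima, but the underlying idea is identical.
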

    \begin{proof}
We will view nonzero elements of $\T(x)$ as piecewise linear functions $\R\to \R$.
Then $h_i$ is identified with the function $h_i(x)=\max\{0,ix-1\}$. 
Now $h_i$ is constant on the interval $[0,1/i]$. If $h_{k+1}\in \T(h_1,h_2,\dots,h_k)$, then 
    Then $h_{k+1}$ can be obtained
    from $h_1,h_2,\dots,h_k$ by taking sums and $\max$. This shows that $h_{k+1}$ is constant on the interval $[0,1/k]$. However, $h_{k+1}(0)=0$ and $h_{k+1}(1/k)=1/k$. Contradiction.
    This proves that $L$ is not finitely generated.
    \end{proof}

Considering the results above, it is a natural question whether subfields of $\B(x)$ are finitely generated. We will show that this is true and that we have an analog of L\"uroth's theorem. Let $\B(x)^\times=\B(x)\setminus\{{\bf 0}\}$. 
\begin{lemma}
    The abelian multiplicative group $\B(x)^\times$ is freely generated by $x$ and ${\bf 1}\oplus x$.
\end{lemma}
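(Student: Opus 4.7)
The plan is to leverage the identification of $\B[x]$ with the sub-semiring of $\PP_1$ consisting of convex hulls of finite subsets of $\N$ established in the preceding theorem. Under this identification, a nonzero element of $\B[x]$ corresponds to a nonempty interval $[p,q]$ with $p,q\in\N$ and $p\leq q$. The monomial $x^a({\bf 1}\oplus x)^b$ has Newton polytope $\{a\}+b\cdot[0,1]=[a,a+b]$, so solving $p=a$ and $q=a+b$ shows that every nonzero element of $\B[x]$ has a unique expression of the form $x^a({\bf 1}\oplus x)^b$ with $a,b\in\N$.

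Next I would pass from $\B[x]$ to its quotient semifield $\B(x)=Q(\B[x])$. Every nonzero element is expressible as $f\oslash g$ with $f,g$ nonzero polynomials; using the representation from the first step, I obtain
\[
f\oslash g \;=\; x^a({\bf 1}\oplus x)^b \,\oslash\, x^c({\bf 1}\oplus x)^d \;=\; x^{a-c}({\bf 1}\oplus x)^{b-d}
\]
with $a,b,c,d\in\N$. This shows that $x$ and ${\bf 1}\oplus x$ generate the abelian group $\B(x)^\times$.

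For freeness, suppose $x^a({\bf 1}\oplus x)^b = x^{a'}({\bf 1}\oplus x)^{b'}$ in $\B(x)^\times$ with $a,a',b,b'\in\Z$. I would choose $C,D\in\N$ large enough that all of $a+C$, $a'+C$, $b+D$, $b'+D$ are nonnegative, and multiply both sides by $x^C({\bf 1}\oplus x)^D$. The resulting equality lives inside the sub-semiring $\B[x]\subseteq\B(x)$, and the uniqueness from the first step then forces $a+C=a'+C$ and $b+D=b'+D$, hence $a=a'$ and $b=b'$.

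No step poses a serious obstacle: the entire argument reduces to the Newton polytope description of $\B[x]$ together with the fact that $\B(x)$ is the quotient semifield of the cancellative semiring $\B[x]$. The only point requiring mild care is clearing denominators before invoking uniqueness inside $\B[x]$.
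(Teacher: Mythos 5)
Your proof is correct and takes essentially the same route as the paper's: both rest on the observation that every nonzero element of $\B[x]$ is $x^\alpha\oplus x^\beta=x^\alpha({\bf 1}\oplus x)^{\beta-\alpha}$ with $0\leq\alpha\leq\beta$, so that $x$ and ${\bf 1}\oplus x$ generate $\B(x)^\times$. You make the Newton-polytope (interval) identification explicit and deduce freeness from uniqueness of that normal form after clearing denominators, while the paper checks freeness by examining constant terms in a putative relation $x^\alpha({\bf 1}\oplus x)^\beta={\bf 1}$; these are minor variants of the same argument.
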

\begin{proof}
    Nonzero elements of $\B[x]$ are of the form $x^\alpha\oplus x^\beta=x^\alpha\odot (1\oplus x)^{\beta-\alpha}$ with $0\leq \alpha\leq \beta$.
    Since elements of $\B(x)^\times$ are quotients of nonzero elements in $\B[x]$, we see that $x$ and ${\bf 1}\oplus x$ generate the group $\B(x)^\times$. Suppose $x^\alpha\odot ({\bf 1}\oplus x)^\beta={\bf 1}$
    for some $\alpha,\beta\in \Z$. By replacing $\alpha,\beta$ with $-\alpha,-\beta$ respectively we may assume that $\alpha\geq 0$. If $\beta\geq 0$ then $x^\alpha\odot ({\bf 1}\oplus x)^\beta={\bf 1}$
    has a nonzero constant term, so $\alpha=0$ and it follows that $\beta=0$ as well. If $\beta\leq 0$, then we have $x^\alpha=({\bf 1}\oplus x)^{\beta}$. Again looking at the constant term it follows that $\alpha=\beta=0$. This shows that $x$ and ${\bf 1}\oplus x$ are free abelian group generators.
\end{proof}
\begin{lemma} 
If $0\leq \alpha\leq \beta$, then $x^{-\alpha}\oplus {\bf 1},{\bf 1}\oplus x^\beta\in \B(x^\alpha\oplus x^\beta)$.
\end{lemma}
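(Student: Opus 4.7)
The plan is to produce explicit rational expressions for both target elements purely in terms of $f:=x^\alpha\oplus x^\beta$, and verify each identity by expanding and then collapsing redundant monomials via Lemma~\ref{lem:convex}.

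First I would handle $\mathbf{1}\oplus x^\beta$. The candidate is simply $\mathbf{1}\oplus f$. Expanding,
$$
\mathbf{1}\oplus f = \mathbf{1}\oplus x^\alpha\oplus x^\beta.
$$
Since $0\leq \alpha\leq \beta$, the exponent $\alpha$ lies in the interval $[0,\beta]=\conv\{0,\beta\}$, so Lemma~\ref{lem:convex} (applied with $S=\{0,\beta\}$ in $\B[x]$) yields $x^\alpha\leq \mathbf{1}\oplus x^\beta$. The middle term is therefore absorbed, $\mathbf{1}\oplus f = \mathbf{1}\oplus x^\beta$, and this is manifestly in $\B(f)$.

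For $x^{-\alpha}\oplus\mathbf{1}$ my guess is that $x^{-\alpha}\oplus\mathbf{1} = (\mathbf{1}\oplus f)\oslash f$. To check, multiply out in $\B(x)$:
$$
(\mathbf{1}\oplus x^{-\alpha})\odot(x^\alpha\oplus x^\beta) = \mathbf{1}\oplus x^\alpha\oplus x^{\beta-\alpha}\oplus x^\beta.
$$
Both $\alpha$ and $\beta-\alpha$ lie in $[0,\beta]=\conv\{0,\beta\}$, so Lemma~\ref{lem:convex} again forces $x^\alpha,x^{\beta-\alpha}\leq \mathbf{1}\oplus x^\beta$, and the right-hand side collapses to $\mathbf{1}\oplus x^\beta = \mathbf{1}\oplus f$. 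Since $f$ is nonzero in the semifield $\B(x)$, we may cancel $f$ and conclude $\mathbf{1}\oplus x^{-\alpha} = (\mathbf{1}\oplus f)\oslash f\in \B(f)$.

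There is no real obstacle here: the whole argument is a one-line Newton-polytope computation in each case, packaged by Lemma~\ref{lem:convex}, plus the fact that $\B(x)$ is a semifield so division by the nonzero element $f$ is legitimate.
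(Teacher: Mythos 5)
Your proof is correct and follows the same route as the paper: both identities ($\mathbf{1}\oplus x^\beta=\mathbf{1}\oplus f$ and $x^{-\alpha}\oplus\mathbf{1}=(\mathbf{1}\oplus x^\beta)\oslash f$) are exactly the ones the paper uses, and your verification via Lemma~\ref{lem:convex} just makes explicit the monomial absorption that the paper leaves implicit.
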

\begin{proof}
We have $1\oplus x^\beta=1\oplus (x^\alpha\oplus x^\beta)\in \B(x^\alpha\oplus x^\beta)$ and 
$$
x^{-\alpha}\oplus {\bf 1}=\frac{{\bf 1}\oplus x^\beta}{x^\alpha\oplus x^\beta}\in \B(x^\alpha\oplus x^\beta).
$$
\end{proof}
\begin{lemma}
    Suppose $f\in \B(x)^\times$. Then $f$ or $f^{-1}$ is of the form $x^\alpha\oplus x^\beta$
    with $\alpha\leq \beta$.
\end{lemma}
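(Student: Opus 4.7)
The plan is to use the freeness result from the preceding lemma together with the Frobenius identity from Lemma~\ref{lem:TropicalishProperties}(e). Since the multiplicative group $\B(x)^\times$ is freely generated by $x$ and ${\bf 1} \oplus x$, every $f \in \B(x)^\times$ has a unique expression
\[
f = x^a \odot ({\bf 1} \oplus x)^b, \qquad a,b \in \Z.
\]
I would first reduce to the case $b \geq 0$: if $b < 0$, then $f^{-1} = x^{-a} \odot ({\bf 1} \oplus x)^{-b}$ has nonnegative exponent on ${\bf 1}\oplus x$, so it suffices to find the required form for $f^{-1}$ instead.

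Next, assuming $b \geq 0$, I would invoke the Frobenius identity of Lemma~\ref{lem:TropicalishProperties}(e), which applies because $\B$ (and hence $\B(x)$) is convex, to write
\[
({\bf 1} \oplus x)^b = {\bf 1}^b \oplus x^b = {\bf 1} \oplus x^b.
\]
(The case $b=0$ is trivial, giving $f = x^a = x^a \oplus x^a$.) Multiplying through by $x^a$ then yields
\[
f = x^a \odot ({\bf 1} \oplus x^b) = x^a \oplus x^{a+b},
\]
and since $b \geq 0$, we have $a \leq a+b$, which is the desired form with $\alpha = a$, $\beta = a+b$.

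There is no real obstacle here; the entire argument rests on the freeness of $\B(x)^\times$ on the generators $x,{\bf 1}\oplus x$ (already established) and on the Frobenius identity (already a consequence of convexity). The only subtlety worth mentioning is that the statement permits $\alpha,\beta$ to be negative integers, so that the reduction via $f \mapsto f^{-1}$ is legitimate and no positivity restriction on $a$ needs to be imposed.
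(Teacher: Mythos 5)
Your proof is correct and follows essentially the same route as the paper: write $f=x^a\odot({\bf 1}\oplus x)^b$ using the generators of $\B(x)^\times$, pass to $f^{-1}$ if $b<0$, and expand $({\bf 1}\oplus x)^b={\bf 1}\oplus x^b$ via the Frobenius identity. The paper leaves that last expansion implicit; you make it explicit, which is a minor improvement but not a different argument.
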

\begin{proof}
We can write $f=x^\gamma ({\bf 1}\oplus x)^\delta$ with $\gamma,\delta\in \Z$. If $\delta\geq 0$
then $f=x^\gamma\oplus x^{\gamma+\delta}$.
Otherwise $1\oslash f=x^{-\gamma} ({\bf 1}\oplus x)^{-\delta}=x^{-\gamma}\oplus x^{-\gamma-\delta}$. 
\end{proof}
\begin{proposition}
Suppose $L\subseteq \B(x)$ is a sub-semifield.
Then exactly one of the following statements is true:
\begin{enumerate}
    \item $L=\B(x^\alpha\oplus x^\beta)$ with $\alpha\leq 0\leq \beta$ and $L^\times$ consists of all elements $x^{n\alpha}\oplus x^{n\beta}$ with $n\geq 0$ and their inverses;
    \item $\L=\B(x^\alpha\oplus x^\beta)$ with $0<\alpha\leq \beta$ and $L^\times$ consists of all elements of the form $x^{n\alpha}\oplus x^{m\beta}$ with $n,m\in \Z$
    and $n\alpha\leq m\beta$ and their inverses.
\end{enumerate}
\end{proposition}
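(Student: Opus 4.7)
The plan is to transport the problem to the additive group $\Z^2$ via the isomorphism $\B(x)^\times\cong\Z^2$ from the preceding lemmas. Writing $f(a,b):=x^a(\mathbf 1\oplus x)^{b-a}$, I would introduce two coordinate functions $\alpha,\beta\colon\B(x)^\times\to\Z$: for a nonzero polynomial $p\in\B[x]$, viewed as an interval in the convex-hull description of $\B[x]\subseteq\PP_1$, take $\alpha(p),\beta(p)$ to be the left and right endpoints, and extend to rational functions by $(\alpha,\beta)(p/q)=(\alpha(p)-\alpha(q),\beta(p)-\beta(q))$. Multiplicativity $(\alpha,\beta)(fg)=(\alpha(f)+\alpha(g),\beta(f)+\beta(g))$ follows from the interval Minkowski sum identity, and the crucial fact is
\[
\alpha(f\oplus g)=\min(\alpha(f),\alpha(g)),\qquad \beta(f\oplus g)=\max(\beta(f),\beta(g)),
\]
which reduces, after clearing denominators, to the statement that the convex hull of $[u_1,v_1]\cup[u_2,v_2]\subseteq\R$ equals $[\min u_i,\max v_i]$. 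Together with $(\alpha,\beta)(f(a,b))=(a,b)$ (verified in the cases $a\le b$ and $a>b$ using the Frobenius identity $\mathbf 1\oplus x^n=(\mathbf 1\oplus x)^n$), this realises $(\alpha,\beta)$ as the isomorphism of the preceding lemma and converts $\oplus$ into the coordinate-wise $(\min,\max)$ operation. Consequently $L\mapsto M:=(\alpha,\beta)(L^\times)$ is a bijection between sub-semifields of $\B(x)$ and subgroups $M\subseteq\Z^2$ closed under $(\min,\max)$.

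With this translation in hand I would classify the admissible subgroups $M$ by rank. If $M=0$ then $L=\B$, which is case~(1) with $\alpha=\beta=0$. If $M=\Z v$ has rank $1$ with $v=(\alpha,\beta)$, the closure relation $v\oplus(-v)=(-|\alpha|,|\beta|)\in M=\Z v$ forces $(-|\alpha|,|\beta|)=kv$ for some $k\in\Z$; a short sign analysis gives $\alpha\beta\le 0$, and after possibly replacing $v$ by $-v$ I may arrange $\alpha\le 0\le\beta$, yielding case~(1). If $M$ has rank $2$, write $\pi_1(M)=\alpha\Z$ and $\pi_2(M)=\beta\Z$ with $\alpha,\beta\ge 1$; rank considerations show $M\cap(\Z\times\{0\})$ and $M\cap(\{0\}\times\Z)$ are nontrivial, so there exist $(a_0,0),(0,b_0)\in M$ both nonzero. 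For any $(a,c)\in M$ with $a>0$, adding a large multiple of $(0,b_0)$ produces $(a,c')\in M$ with $c'\ge 0$; then $(a,c')\oplus(0,0)=(0,c')\in M$, and subtracting $(0,c')$ from $(a,c')$ gives $(a,0)\in M$. By symmetry $(0,b)\in M$ for every $b\in\beta\Z$, hence $M\supseteq\alpha\Z\times\beta\Z$; the reverse inclusion is automatic, so $M=\alpha\Z\times\beta\Z$, and after relabelling to arrange $\alpha\le\beta$ this is case~(2).

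Finally I would read off the stated descriptions of $L^\times$ using the dictionary $f(a,b)=x^a\oplus x^b$ when $a\le b$ and $f(a,b)^{-1}=x^{-a}\oplus x^{-b}$ when $a>b$: in case~(1), $\{nv:n\in\Z\}$ becomes $\{x^{n\alpha}\oplus x^{n\beta}:n\ge 0\}$ together with inverses, and in case~(2), $\alpha\Z\times\beta\Z$ becomes $\{x^{n\alpha}\oplus x^{m\beta}:n\alpha\le m\beta\}$ together with inverses. To identify $L=\B(x^\alpha\oplus x^\beta)$, the multiplicative orbit of $t:=x^\alpha\oplus x^\beta$ gives $\Z\cdot(\alpha,\beta)\subseteq M$, which already equals $M$ in case~(1); in case~(2) the extra direction $(-\alpha,0)$ needed for $M=\alpha\Z\times\beta\Z$ is supplied by the computation $\mathbf 1\oplus t^{-1}=\mathbf 1\oplus x^{-\alpha}=f(-\alpha,0)$ (via a common denominator and Frobenius), after which the full group $\alpha\Z\times\beta\Z$ is generated multiplicatively. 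The main obstacle is establishing the $(\min,\max)$ identity for $\alpha(f\oplus g)$ and $\beta(f\oplus g)$ in the rational setting; once that identity is in place, everything else is a direct combinatorial classification of $(\min,\max)$-closed subgroups of $\Z^2$.
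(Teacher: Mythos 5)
Your strategy is, at its core, the same as the paper's: identify $\B(x)^\times$ with $\Z^2$ via the free generators $x$ and ${\bf 1}\oplus x$, and classify $L^\times$ by its rank. What you add is a clean dictionary that the paper leaves implicit: the endpoint coordinates $(\alpha,\beta)$ turn $\oplus$ into the coordinatewise $(\min,\max)$ operation, so the whole problem becomes the classification of $(\min,\max)$-closed subgroups of $\Z^2$. That dictionary is correct (well-definedness on quotients and the $\min$/$\max$ identity both check out exactly as you say), your rank-$1$ sign analysis matches the paper's argument that $\alpha$ and $\beta$ cannot have the same strict sign, and your rank-$2$ argument that $M=\pi_1(M)\times\pi_2(M)$ is a tidier route to the same conclusion the paper reaches by divisibility arguments with ${\bf 1}\oplus x^\gamma$ and $x^{-\gamma}\oplus{\bf 1}$.

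The one step that does not work is the final ``after relabelling to arrange $\alpha\leq\beta$'' in the rank-$2$ case. The two integers play asymmetric roles --- $\alpha$ generates $\pi_1(M)$ (left endpoints) and $\beta$ generates $\pi_2(M)$ (right endpoints) --- so swapping them replaces $\alpha\Z\times\beta\Z$ by the different subgroup $\beta\Z\times\alpha\Z$. Concretely, $M=2\Z\times\Z$ is a $(\min,\max)$-closed subgroup of rank $2$; the corresponding $L$ is $\B(x^{-2}\oplus x^{-1})$ (the element with coordinates $(2,1)$ is $x^2\oslash({\bf 1}\oplus x)=(x^{-2}\oplus x^{-1})^{-1}$), and $L$ is not $\B(x^{\alpha}\oplus x^{\beta})$ for any $0<\alpha\leq\beta$, since $\B(x^\alpha\oplus x^\beta)^\times$ corresponds to $\alpha\Z\times\beta\Z$ with $\alpha\leq\beta$. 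You should be aware that this is really a defect of the statement (and of the paper's own proof, which defines $\alpha,\beta$ as the minimal positive integers with $x^{-\alpha}\oplus{\bf 1},\,{\bf 1}\oplus x^\beta\in L^\times$ and never verifies $\alpha\leq\beta$ or that $x^\alpha\oplus x^\beta$ alone generates $L$): case (2) should either drop the constraint $\alpha\leq\beta$ and describe the generator as the element with coordinates $(\alpha,\beta)$, or be supplemented by the mirror case $L=\B(x^{\alpha}\oplus x^{\beta})$ with $\alpha\leq\beta<0$. With that correction your argument goes through; without it, the classification as you (and the paper) state it misses these sub-semifields.
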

\begin{proof}
Let $r$ be the rank of the abelian group $L^\times\subseteq \B(x)^\times$. Since $\B(x)^\times$ has rank $2$, we have $0\leq r\leq 2$. If $r=0$, then $L=\B$ and we are in case (1) with $\alpha=\beta=0$.

Suppose $r=1$.
Then $L^\times$ is as a group generated by one element $f\in L^\times$. Without loss of generality we may write $f=x^\alpha\oplus x^\beta$
with $\alpha\leq \beta$. If $\alpha,\beta$ are both positive, then ${\bf 1}\oplus x^{-\alpha}=x^{-\alpha}({\bf 1}\oplus x)^\alpha$ and ${\bf 1}\oplus x^{\beta}=({\bf 1}\oplus x)^\beta$
are independent which gives a contradiction.
Similarly, $\alpha$ and $\beta$ cannot both be negative. So we may assume that $\alpha\leq 0\leq \beta$. For $n\geq 0$ we have $f^n=x^{n\alpha}\oplus x^{n\beta}$. 
Note that every polynomial in $f$ of degree $n$ is equal to $f^n$.
We are in case (1). 

Suppose that $r=2$.
There exists a positive integer $\gamma$ with $x^\gamma\in L^\times$. Then we have $x^{-\gamma}\oplus {\bf 1},{\bf 1}\oplus x^\gamma\in L^\times$.
Let $\alpha$
and $\beta$ be the smallest positive integers
with $x^{-\alpha}\oplus {\bf 1},{\bf 1}\oplus x^\beta\in L^\times$.
Note that $x^{\alpha\beta}=({\bf 1}\oplus x^\beta)^\alpha(x^{-\alpha}\oplus {\bf 1})^\beta\in L^\times$.
If ${\bf 1}\oplus x^\gamma\in L^\times$ with $\gamma\geq 0$,
then one can write $\gamma=n\beta+\rho$
where $0\leq \rho<\beta$.
It follows that $({\bf 1}\oplus x^\gamma)\oslash({\bf 1}\oplus x^{\beta})^n={\bf 1}\oplus x^\rho$. By minimality of $\beta$, we get $\rho=0$ and $\gamma$ is divisible by $\beta$.
If ${\bf 1}\oplus x^\gamma\in L^\times$ with $\gamma\leq 0$,
then a similar argument shows that $\gamma$ is divisible by $\alpha$.
If $g\in L^\times$ then $g$ or $g^{-1}$ is of the form $x^\gamma\oplus x^\delta\in L^\times$ with $\gamma\leq \delta$.
For some integer $k$, $\gamma'=k\alpha\beta+\gamma$ and $\delta'=k\alpha\beta+\delta$ are positive.
We have $x^{\gamma'}\oplus x^{\delta'}=(x^{\alpha\beta})^k(x^\gamma\oplus x^\delta)\in L^\times$. It follows that $x^{-\gamma'}\oplus {\bf 1},{\bf 1}\oplus x^{\delta'}\in L^\times$.
So $\gamma'=\gamma+k\alpha\beta$ is divisible by $\alpha$ and $\delta'=\delta+k\alpha\beta$ is divisible by $\beta$. It follows that $\gamma$ is divisible by $\alpha$ and $\delta$ is divisible by $\beta$. If we write $\gamma=n\alpha$ and $\delta=m\beta$ with $m,n\in \Z$,
then we have $x^\gamma\oplus x^\delta=x^{n\alpha}\oplus x^{m\beta}$
with $n\alpha=\gamma\leq \delta=m\beta$.
On the other hand, if $n\alpha\leq m\beta$, then we have
$$({\bf 1}\oplus x^{-\alpha})^{-n}({\bf 1}\oplus x^\beta)^m=
x^{n\alpha}({\bf 1}\oplus x)^{-\alpha n}({\bf 1}\oplus x)^{\beta m}=x^{n\alpha}({\bf 1}\oplus x)^{\beta m-\alpha n}=x^{n\alpha}\oplus x^{m\beta}.
$$

\end{proof}
\subsection{Rational invariants}
Suppose $\F$ is an idempotent semifield. Since semifields are cancellative, $\F$ is convex.
Let $G\subseteq S_n$ be a subgroup. Then $G$ acts on $\F[x_1,x_2,\dots,x_n]$ and $\F(x_1,x_2,\dots,x_n)=Q(\F[x_1,x_2,\dots,x_n])$ by permuting the variables.
\begin{lemma}
We have $\F(x_1,x_2,\dots,x_n)^G=Q(\F[x_1,x_2,\dots,x_n]^G)$.
\end{lemma}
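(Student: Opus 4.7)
The plan is to prove the two inclusions separately. The easy inclusion $Q(\F[x_1,\dots,x_n]^G)\subseteq \F(x_1,\dots,x_n)^G$ follows from the universal property of the quotient semifield: the inclusion $\F[x_1,\dots,x_n]^G\hookrightarrow \F(x_1,\dots,x_n)^G$ lands in a semifield (since $\F(x_1,\dots,x_n)^G$ is a sub-semifield of the semifield $\F(x_1,\dots,x_n)$) and extends uniquely to $Q(\F[x_1,\dots,x_n]^G)\to \F(x_1,\dots,x_n)^G$, and this extension is injective because $\F[x_1,\dots,x_n]^G\hookrightarrow Q(\F[x_1,\dots,x_n]^G)$ is, and $\F(x_1,\dots,x_n)$ is cancellative.

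For the nontrivial inclusion, I would take $f\in \F(x_1,\dots,x_n)^G$ and write $f=a\oslash b$ with $a,b\in \F[x_1,\dots,x_n]$, $b\neq {\bf 0}$. The idea is the classical ``multiply numerator and denominator by the orbit of the denominator'' trick, adapted to the semifield setting. Define
$$
N=\bigodot_{g\in G} g\cdot b, \qquad M=a\odot \bigodot_{g\in G,\,g\neq 1} g\cdot b,
$$
so that $f=M\oslash N$. The element $N$ is plainly $G$-invariant: for any $h\in G$, the map $g\mapsto hg$ permutes $G$, so $h\cdot N=N$.

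The real content is showing $M$ is $G$-invariant. First, $f=a\oslash b$ being $G$-invariant means $(h\cdot a)\odot b = a\odot (h\cdot b)$ in $\F[x_1,\dots,x_n]$ for all $h\in G$ (cross-multiply in the semifield, then use cancellativity). Now for $h\in G$, substituting $g'=hg$ gives
$$
h\cdot M = (h\cdot a)\odot \bigodot_{g'\in G,\,g'\neq h} g'\cdot b.
$$
Multiply by $h\cdot b$ to obtain $(h\cdot M)\odot (h\cdot b)=(h\cdot a)\odot N$. On the other hand,
$$
M\odot (h\cdot b)=a\odot (h\cdot b)\odot \bigodot_{g\neq 1} g\cdot b=(h\cdot a)\odot b\odot \bigodot_{g\neq 1} g\cdot b=(h\cdot a)\odot N.
$$
Hence $M\odot (h\cdot b)=(h\cdot M)\odot (h\cdot b)$, and cancelling the nonzero factor $h\cdot b$ gives $h\cdot M=M$. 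Therefore $M,N\in \F[x_1,\dots,x_n]^G$ and $f=M\oslash N\in Q(\F[x_1,\dots,x_n]^G)$.

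The only mild subtlety is justifying the identity $(h\cdot a)\odot b=a\odot (h\cdot b)$ from invariance of $a\oslash b$; this is the one place where cancellativity of $\F[x_1,\dots,x_n]$ (a sub-semiring of the semifield $\F(x_1,\dots,x_n)$) is genuinely used. Apart from this, the argument is purely formal manipulation, and I expect no real obstacle.
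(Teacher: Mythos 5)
Your proof is correct and follows essentially the same route as the paper: the paper also multiplies numerator and denominator by $u=\bigodot_{\sigma\in G}\sigma\cdot g$, observing that $h=uh\oslash u$ with $uh=f\odot\bigodot_{\sigma\neq 1}\sigma\cdot g$ a polynomial and both $uh$ and $u$ invariant. The only difference is cosmetic — the paper gets invariance of the numerator for free as a product of the two invariants $u$ and $h$ in the rational function semifield, whereas you verify it by hand inside the polynomial semiring via cross-multiplication and cancellation.
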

\begin{proof}
Suppose that $h\in \F(x_1,x_2,\dots,x_n)^G$. 
We can write $h=f\oslash g$ with $f,g\in \F[x_1,x_2,\dots,x_n]$. Let $u=\prod_{\sigma\in G} \sigma\cdot g$. Then $h=uh\oslash u$ and $uh,u\in \F[x_1,x_2,\dots,x_n]^G$. This proves that $\F(x_1,x_2,\dots,x_n)^G\subseteq Q(\F[x_1,x_2,\dots,x_n]^G)$. The opposite inclusion is obvious.
\end{proof}

For $\alpha=(\alpha_1,\alpha_2,\dots,\alpha_n)\in \R^n$, we define the $\ell_p$ norm by by
$\|\alpha\|_p=\big(\sum_{i=1}^n |\alpha_i|^p\big)^{1/p}$ and $\|\alpha\|_\infty=\max\{|\alpha_1|,|\alpha_2|,\dots,|\alpha_n|\}$.

\begin{theorem}\label{theo:RationalInvariants}
 Let $p_1,p_2,\dots,p_k$ be the first $k$ prime numbers where $k=|G|$. Then the invariant semifield $\F(x_1,x_2,\dots,x_n)^G$ is generated by all
$\Tr_G(x^\alpha)$ with  $\alpha\in \N^n$ and $\|\alpha\|_\infty<p_1p_2\cdots p_k$.
\end{theorem}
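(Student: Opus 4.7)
By the preceding lemma, $\F(x_1,\ldots,x_n)^G = Q(\F[x_1,\ldots,x_n]^G)$, and $\F[x_1,\ldots,x_n]^G$ is spanned over $\F$ by the orbit sums $\Tr_G(x^\beta)$ for $\beta\in\N^n$ (an invariant polynomial has a $G$-symmetric set of monomials, hence is a semiring combination of such orbit sums). Therefore the theorem reduces to showing that each $\Tr_G(x^\beta)$ lies in the sub-semifield $L\subseteq \F(x_1,\ldots,x_n)^G$ generated by the small orbit sums $\Tr_G(x^\alpha)$ with $\|\alpha\|_\infty < N$, where $N = p_1\cdots p_k$.

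The argument rests on three ingredients. First, the Frobenius identity $\Tr_G(x^\beta)^m = \Tr_G(x^{m\beta})$ from Lemma~\ref{lem:TropicalishProperties}(e). Second, the shift identity $\Tr_G(x^{\beta+\mu}) = x^\mu\cdot \Tr_G(x^\beta)$ for any $G$-fixed $\mu\in\Z^n$; since the sublattice of $G$-fixed vectors is spanned by the orbit-indicator vectors $\mathbf{1}_O$ for $G$-orbits $O\subseteq\{1,\ldots,n\}$, and each $\Tr_G(x^{\mathbf{1}_O}) = \prod_{j\in O}x_j$ has $\|\mathbf{1}_O\|_\infty = 1 < N$, every invariant Laurent monomial $x^\mu$ automatically lies in $L$. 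Third, a Bezout trick: because $\gcd(p_1,\ldots,p_k) = 1$, there are integers $a_1,\ldots,a_k\in\Z$ with $\sum_ia_ip_i = 1$, so in the semifield
\begin{equation*}
\Tr_G(x^\beta) \,=\, \Tr_G(x^\beta)^{\sum_ia_ip_i} \,=\, \prod_{i=1}^k \bigl(\Tr_G(x^{p_i\beta})\bigr)^{a_i},
\end{equation*}
with negative $a_i$ interpreted by semifield inversion. Thus it suffices to show $\Tr_G(x^{p_i\beta})\in L$ for each $i = 1,\ldots,k$.

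To exhibit $\Tr_G(x^{p_i\beta})$ in $L$ I would enumerate $G = \{g_1,\ldots,g_k\}$ and, using the Chinese Remainder Theorem modulo $N$, build residue vectors $\alpha^{(1)},\ldots,\alpha^{(k)}\in\N^n$ with $\|\alpha^{(j)}\|_\infty < N$ and a $G$-fixed shift $\mu\in\Z^n$ such that
\begin{equation*}
\Tr_G(x^{p_i\beta})\cdot x^\mu \,=\, \bigodot_{j=1}^k \Tr_G(x^{\alpha^{(j)}}).
\end{equation*}
At the Newton-polytope level this asks for the translated orbit polytope $\conv(G\cdot p_i\beta)+\mu$ to coincide with the Minkowski sum $\sum_j\conv(G\cdot \alpha^{(j)})$. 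The $k$ distinct primes label the $k$ group elements unambiguously via their CRT residues in $\Z/N\Z$, which is what makes it possible to pick the $\alpha^{(j)}$ so that the Minkowski sum collapses onto a single $G$-orbit.

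The main obstacle I expect is this polytopal identity: one must verify that all cross-terms $x^{\sum_j h_j(\alpha^{(j)})}$ coming from non-diagonal tuples $(h_1,\ldots,h_k)\in G^k$ are dominated, in the idempotent partial order of Lemma~\ref{lem:idempotentproperties}, by the $G$-translates of the diagonal term $x^{h(\sum_j\alpha^{(j)})}$, so that in the convex semiring $\F[x_1,\ldots,x_n]$ the Minkowski-product orbit sum realizes exactly the orbit of $p_i\beta+\mu$. The necessity of using exactly the first $k$ primes—and so the bound $N = p_1\cdots p_k$—arises naturally here: fewer pairwise-coprime moduli cannot separate the $k$ group elements, so the collapse would fail. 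Once this decomposition is in place, Frobenius together with the Bezout relation assembles $\Tr_G(x^\beta)\in L$ for all $\beta$, completing the proof.
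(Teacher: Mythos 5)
There is a genuine gap, and it sits exactly where you flagged your ``main obstacle'': the proposed identity $\Tr_G(x^{p_i\beta})\odot x^\mu=\bigodot_{j=1}^k\Tr_G(x^{\alpha^{(j)}})$ with all $\|\alpha^{(j)}\|_\infty<N$ cannot hold for general $\beta$. Passing to Newton polytopes, the right-hand side is a Minkowski sum of $k$ orbit polytopes of points of bounded norm, so its width in every direction is at most $k$ times a constant depending only on $n$ and $N$; the left-hand side is a translate (by the $G$-fixed vector $\mu$, which does not change widths) of $\conv(G\cdot p_i\beta)$, whose width grows without bound with $\max_{\sigma\in G}\|\beta-\sigma(\beta)\|$. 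So no choice of small $\alpha^{(j)}$ and fixed shift $\mu$ can work once $\beta$ has large spread, and since your plan has no inductive fallback --- you try to reach an arbitrary $\beta$ in a single step from bounded data --- the argument cannot be repaired as stated. The Bezout reduction is formally valid but buys nothing, because $\Tr_G(x^{p_i\beta})=\Tr_G(x^\beta)^{p_i}$ by Frobenius, so membership of $\Tr_G(x^{p_i\beta})$ in $L$ is essentially the original problem again.

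The paper's mechanism is different in two essential ways, and both are needed. First, it multiplies by a \emph{single} small trace $\Tr_G(x^\gamma)$, where $\gamma$ is chosen by CRT so that $\gamma+\sigma_i(\beta)\in p_i\Z^n$ for each of the $k$ group elements $\sigma_i$; the primes tag the group elements so that the $i$-th ``row'' of the product is a perfect $p_i$-th power, giving the \emph{additive} (i.e.\ $\oplus$-) decomposition $\Tr_G(x^\beta)\Tr_G(x^\gamma)=\bigoplus_{i=1}^k\Tr_G(x^{\delta_i})^{p_i}$ with $\delta_i=(\gamma+\sigma_i(\beta))/p_i$. Second, the $\delta_i$ are \emph{not} required to be small --- only strictly smaller, $\|\delta_i\|_\infty<(N+M)/2<M$ where $M=\|\beta\|_\infty$ --- and the proof concludes by induction on $\|\beta\|_\infty$, dividing by $\Tr_G(x^\gamma)\in S$ in the semifield. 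Your instinct to use CRT with the first $k$ primes to separate the $k$ group elements, and to exploit the Frobenius identity, is exactly right; what is missing is the replacement of your one-shot multiplicative factorization (which fails) by this ``multiply by one small corrector, decompose as an $\oplus$ of prime powers of strictly smaller traces, and recurse'' scheme.
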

\begin{proof}
Suppose that $G=\{\sigma_1,\sigma_2,\dots,\sigma_k\}$.
Let $p_1=2<p_2<\cdots<p_k$ be the first $k$ prime numbers. We set $N=p_1p_2\cdots p_k$.
We define $S$ as the set of all invariants of the form
 $\Tr_G(x^\alpha)$ with $\alpha\in \N^n$
 and $\|\alpha\|_\infty<N$.
By induction on $\|\beta\|_\infty$ we show that
 $\Tr(x^\beta)$ can be written as a rational function in the elements of $S$ for all $\beta\in \N^n$. This is obvious when  $\|\beta\|_\infty< N$. Suppose that $M=\|\beta\|_\infty\geq N$.
 By the Chinese Remainder 
 Theorem there exists a vector $\gamma\in \N^n$ 
 with $\|\gamma\|_\infty<N$ and
 $\gamma+\sigma_i(\beta)\in p_i \Z^n$ for all $i$.
 Let $\delta_i=(\gamma+\sigma_i(\beta))/p_i$.
 We have $\|\delta_i\|_\infty<(N+M)/2<M$.
 
Now we have
\begin{multline*}
\Tr_G(x^\beta)\Tr_G(x^\gamma)=
\Big(\bigoplus_{i=1}^k x^{\sigma_i(\beta)}\Big)\Big(\bigoplus_{i=1}^k x^{\sigma_j(\gamma)}\Big)=
\bigoplus_{j=1}^k \bigoplus_{i=1}^k x^{\sigma_j(\gamma+\sigma_i(\beta))}=\\=
\bigoplus_{j=1}^k \bigoplus_{i=1}^kx^{\sigma_j(\delta_i)p_i}=
\bigoplus_{j=1}^k \Big( \bigoplus_{i=1}^k x^{\sigma_j(\delta_i)}\Big)^{p_i}=\bigoplus_{i=1}^k \Tr_G(x^{\delta_i})^{p_i}.
\end{multline*}
Now $\Tr_G(x^{\gamma})$ lies in $S$ because $\|\gamma\|_\infty<N$.
Also, by the induction hypothesis, $\Tr_G(x^{\delta_i})$ is a rational function in the elements of $S$ because $\|\delta_i\|_\infty<M$ for all $i$.
This proves that $\Tr_G(x^\beta)$ is a rational function in $S$.
\end{proof}
\begin{proof}[Proof of Theorem~\ref{theo:2}]
Note that $\|\alpha\|_1\leq n\|\alpha\|_\infty$. Now Theorem~\ref{theo:2} follows from Theorem~\ref{theo:RationalInvariants}, where $\F=\T$.
\end{proof}

\begin{corollary}
Suppose that
$L$ is an idempotent semifield that is finitely generated over a sub-semifield $\F$, and $G$ is a finite group that acts on $L$ by automorphisms over $\F$. Then $L^G$ is a finitely generated over $\F$.    
\end{corollary}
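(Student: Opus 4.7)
The plan is to reduce to the setting of Theorem~\ref{theo:RationalInvariants} by finding generators on which $G$ acts by permutation, then to transport finite generation of invariants through a surjective equivariant homomorphism via Lemma~\ref{lem:surjectiveInvariants}.

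First, I would pick a finite generating set $a_1,\dots,a_n\in L\setminus\{{\bf 0}\}$ of $L$ over $\F$ as a semifield, and enlarge it to the list $y_1,\dots,y_m$ consisting of all $G$-translates $g\cdot a_i$. Then $L=\F(y_1,\dots,y_m)$, and $G$ acts on the set $\{y_1,\dots,y_m\}$, producing a group homomorphism $G\to S_m$ with image $H\subseteq S_m$. The kernel fixes every $y_j$, hence acts trivially on the semifield they generate, so $L^G=L^H$.

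Next, I would construct a surjective $H$-equivariant semifield homomorphism $\pi\colon\F(x_1,\dots,x_m)\to L$ sending $x_i\mapsto y_i$ and fixing $\F$. Proposition~\ref{prop:Universal} yields a homomorphism $\pi_0\colon\F[x_1,\dots,x_m]\to L$ with these properties; because each $y_i$ is nonzero and $L$ is a semifield (hence cancellative), an inductive application of Lemma~\ref{lem:extendAIMC}(b) shows $\pi_0^{-1}({\bf 0})=\{{\bf 0}\}$, so the universal property of the quotient semifield $Q$ lifts $\pi_0$ to the desired $\pi$ on $\F(x_1,\dots,x_m)=Q(\F[x_1,\dots,x_m])$. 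Equivariance is immediate on the generators $x_i$, and $\pi$ is surjective because the $y_i$ generate $L$ as a semifield.

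Finally, by Theorem~\ref{theo:RationalInvariants} applied to $H\subseteq S_m$, the semifield $\F(x_1,\dots,x_m)^H$ is generated over $\F$ by the finitely many traces $\Tr_H(x^{\alpha})$ with $\|\alpha\|_\infty<p_1p_2\cdots p_{|H|}$. Applying Lemma~\ref{lem:surjectiveInvariants} to the surjective $H$-equivariant map $\pi$ gives $\pi\bigl(\F(x_1,\dots,x_m)^H\bigr)=L^H=L^G$, so the images of those finitely many traces generate $L^G$ over $\F$ as a semifield. The main obstacle is verifying that $\pi$ is well-defined as a semifield homomorphism, that is, that $\pi_0$ sends nonzero polynomials to nonzero elements of $L$; this is what forces the inductive use of Lemma~\ref{lem:extendAIMC}(b), and it is also why the hypothesis that $L$ is a semifield (and not merely convex) is essential to the argument.
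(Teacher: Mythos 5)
Your proposal is correct and follows essentially the same route as the paper: close the generating set under the $G$-action, use the universal property (Proposition~\ref{prop:Universal} together with Lemma~\ref{lem:extendAIMC}(b)) to build a surjective $G$-equivariant homomorphism $\F(x_1,\dots,x_m)\to L$ with trivial kernel over ${\bf 0}$, and then combine Theorem~\ref{theo:RationalInvariants} with Lemma~\ref{lem:surjectiveInvariants}. Your extra step of replacing $G$ by its image $H$ in $S_m$ is a harmless (and slightly cleaner) refinement of the paper's implicit lifting of the $G$-action to the polynomial semiring.
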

\begin{proof}
Suppose that $L=\F(y_1,y_2,\dots,y_n)$ for some $y_1,y_2,\dots,y_n\in L^\times$ we may assume that $\{y_1,y_2,\dots,y_n\}$ is a union of $G$-orbits.
 Note that $y_1,y_2,\dots,y_n$ may have nontrivial relations. By the universal property (Lemma~\ref{prop:Universal}), there exists a homomorphism $\phi$ from the tropical polynomial ring $\F[x_1,x_2,\dots,x_n]$
to $L$ such that $\phi(x_i)=y_i$ for all $i$
and $\phi(a)=a$ for all $\in \F$. Because $\phi(x_i)=y_i$ is nonzero for all $i$,
we have $\phi^{-1}({\bf 0})={\bf 0}$.
We can lift the $G$-action to $\F[x_1,x_2,\dots,x_n]$. Since $G$ acts by permuting the set $\{y_1,y_2,\dots,y_n\}$, 
an element $\sigma\in G$ sends $y_i$ to another generator $y_{\sigma(i)}$. We define the action of $G$ on $\F[x_1,x_2,\dots,x_n]$ by $\sigma\cdot x_i=x_{\sigma(i)}$. Now $\phi$ is $G$-equivariant.
By the universal property of the quotient semifield, $\phi$ extends to a homomorphism
$\F(x_1,x_2,\dots,x_n)\to L$. Now $\phi$ is surjective, $G$-equivariant and $\phi^{-1}({\bf 0})={\bf 0}$. Note that $\phi$ may not be injective. Now $\F(x_1,x_2,\dots,x_n)^G$ is finitely generated over $\F$ by Theorem~\ref{theo:RationalInvariants}, and $\phi(\F(x_1,x_2,\dots,x_n)^G)=L^G$ by Lemma~\ref{lem:surjectiveInvariants}, so
$L^G$ is generated by $\phi(x_1),\phi(x_2),\dots,\phi(x_n)$ over $\F$.

\end{proof}

\section{Separating invariants}\label{sec:Separating}
Suppose that $G$ is a subgroup of $S_n$ and consider the action of $G$ on $\T(x_1,x_2,\dots,x_n)$ by $\sigma(x_i)=x_{\sigma(i)}$ for all $i$ and $\sigma\in G$.
 We define a (left) action of $G$ on $\R^n$ by
$$
\sigma (v_1,v_2,\dots,v_n)=(v_{\sigma^{-1}(1)},v_{\sigma^{-1}(2)},\dots,v_{\sigma^{-1}(n)}).
$$
For $\alpha\in \N^n\subseteq\R^n$ we have
 $\sigma\cdot x^\alpha=\prod_{i=1} x_{\sigma(i)}^{\alpha_i}=\prod_{i=1}^n x_i^{\alpha_{\sigma^{-1}(i)}}=x^{\sigma(\alpha)}$.
 We can view elements of $\T(x_1,x_2,\dots,x_n)$ as piecewise linear functions on $\R^n$. 
For $v\in \R^n$ we have $x^{\sigma(\alpha)}(v)=\prod_{i=1}^n v^{\alpha_i}_{\sigma(i)}=x^{\alpha}(\sigma^{-1}(v))$.
It follows that 
  $(\sigma\cdot f)(v)=f(\sigma^{-1}(v))$ for all $f\in \T(x_1,x_2,\dots,x_n)$, $v\in \R^n$ and $\sigma\in S_n$.

\begin{definition}
We say that  $f_1,f_2,\dots,f_m\in \T(x_1,x_2,\dots,x_n)^G$
are separating if for all $v,w\in \R^n$ we have:
$f_i(v)=f_i(w)$ for all $i$ if and only if $G\cdot v=G\cdot w$.
\end{definition}
Let $\rho=(n-1,n-2,\dots,0)$. For $\sigma\in S_n$ define $f_\sigma\in \T[x_1,x_2,\dots,x_n]^G$ by $f_\sigma=\Tr_G(x^{\sigma(\rho)})=\sum_{\tau\in G} x^{\tau\sigma(\rho)}$. 
Note that $f_\sigma=f_\lambda$ if and  only if 
$G\sigma=G\lambda$.

\begin{theorem}\label{theo:SeparatingInvariants}
 A set of separating invariants is obtained by taking $e_1,e_2,\dots,e_n$ together with all $f_{\sigma}$, $\sigma\in S_n$.
\end{theorem}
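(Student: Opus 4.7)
The plan is to split the argument into two stages. First, I would use $e_1,\dots,e_n$ to recover the \emph{multiset} of coordinates of $v$; then use the $f_\sigma$ to distinguish $G$-orbits among vectors sharing that multiset. Interpreted as piecewise linear functions on $\R^n$, the tropical elementary symmetric polynomial $e_k$ satisfies $e_k(v)=\max_{|I|=k}\sum_{i\in I}v_i$, i.e.\ $e_k(v)$ is the sum of the $k$ largest coordinates of $v$; successive differences then recover the decreasing rearrangement $u:=v^\downarrow$. Hence $e_k(v)=e_k(w)$ for all $k$ forces $v^\downarrow=w^\downarrow=u$. Set $H:=\mathrm{Stab}_{S_n}(u)$ and fix any sorting permutations $\pi_v,\pi_w$ (so $\pi_v(v)=u=\pi_w(w)$). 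A short coset computation shows that $v$ and $gv$ correspond to the left cosets $H\pi_v$ and $H\pi_v g^{-1}$, respectively, so
\[
 Gv = Gw \ \Longleftrightarrow\ H\pi_v G = H\pi_w G\quad\text{as subsets of }S_n.
\]
The remaining task is therefore to recover the double coset $H\pi_v G$ from the values $\{f_\sigma(v)\}_{\sigma\in S_n}$.

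For this, I would use the identity $\langle \lambda(\rho),v\rangle=\langle\rho,\lambda^{-1}(v)\rangle$ (from the definition $(\sigma\cdot v)_i=v_{\sigma^{-1}(i)}$) and the rearrangement inequality to compute
\[
 h(v)\ :=\ \max_{\sigma\in S_n}f_\sigma(v)\ =\ \max_{\lambda\in S_n}\langle\lambda(\rho),v\rangle\ =\ \langle\rho,u\rangle,
\]
and then pin down which $\lambda$ attain this maximum. Because $\rho=(n-1,n-2,\dots,0)$ has \emph{strictly} decreasing entries, the maximum of $\langle\rho,\mu\rangle$ over permutations $\mu$ of the multiset $u$ is attained uniquely at $\mu=u$. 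Therefore $\langle\lambda(\rho),v\rangle=h(v)$ iff $\lambda^{-1}(v)=u$, iff $\lambda\in\pi_v^{-1}H$. Passing to the $G$-max in $f_\sigma(v)=\max_{\tau\in G}\langle\tau\sigma(\rho),v\rangle$ yields
\[
 \{\sigma\in S_n:f_\sigma(v)=h(v)\}\ =\ G\pi_v^{-1}H,
\]
whose inverse as a subset of $S_n$ is the double coset $H\pi_v G$. Consequently, if $f_\sigma(v)=f_\sigma(w)$ for every $\sigma$, then $h(v)=h(w)$ (as the max of the common values) and the coincidence sets agree, forcing $H\pi_v G=H\pi_w G$ and so putting $v$ and $w$ in the same $G$-orbit.

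The subtle point to watch is the case when $v$ has repeated coordinates: then $\pi_v$ is only well-defined modulo left multiplication by the non-trivial stabilizer $H$, and a priori the set $\{\sigma:f_\sigma(v)=h(v)\}$ could be a union of several $(G,H)$-double cosets rather than a single one. The strict monotonicity of $\rho$ is what rules this out by enforcing uniqueness of the rearrangement maximizer on permutations of $u$; once that uniqueness is in place, the remainder of the argument is a formal manipulation of cosets.
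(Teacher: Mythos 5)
Your proof is correct and rests on the same two ingredients as the paper's: the elementary symmetric polynomials recover the decreasing rearrangement, and the strict decrease of $\rho$ turns the equality case of the rearrangement inequality into a permutation in $G$ matching $v$ to $w$. The only difference is packaging: where you identify the full argmax set $\{\sigma : f_\sigma(v)=h(v)\}$ as the double coset $G\pi_v^{-1}H$ and compare double cosets, the paper evaluates $f_\gamma$ at a single sorting permutation $\gamma$ of $v$ and extracts the witnessing $\tau\in G$ directly from the equality case --- a slightly leaner route to the same conclusion.
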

\begin{proof}
Suppose that $v=(v_1,v_2,\dots,v_n),w=(w_1,w_2,\dots,w_n)\in \R^n$ satisfy $e_i(v)=e_i(w)$ for all $i$
and $f_{\sigma}(v)=f_\sigma(w)$ for all $\sigma\in S_n$. We will show that $G\cdot v=G\cdot w$.

Choose a permutation $\gamma\in S_n$ such that
$v_{\gamma(1)}\geq v_{\gamma(2)}\geq \cdots \geq v_{\gamma(n)}$. 
We have $e_k(v)=v_{\gamma(1)}+v_{\gamma(2)}+\cdots +v_{\gamma(j)}$ for all $j$. In particular, we have $e_1(v)=v_{\gamma(1)}$ and $e_j(v)-e_{j-1}(v)=v_{\gamma(j)}$ for $j=2,3,\dots,n$.
Also, for any permutation $\sigma\in S_n$
there is an inequality
\begin{multline*}
 x^{\gamma(\rho)}(v)=(n-1)v_{\gamma(1)}+(n-2)v_{\gamma(2)}+\cdots+v_{\gamma(n-1)}\geq\\
(n-1)v_{\sigma\gamma(1)}+(n-2)v_{\sigma\gamma(2)}+\cdots+v_{\sigma\gamma(n-1)}=x^{\sigma\gamma(\rho)}(v).
\end{multline*}
It follows that $f_\gamma(v)=\Tr_G(x^{\gamma(\rho)})(v)=\max_{\sigma\in G} x^{\sigma\gamma(\rho)}(v)=x^{\gamma(\rho)}(v)$.
Since $e_j(v)=e_j(w)$ for all $j$, $w_1,w_2,\dots,w_n$ is a permutation of $v_1,v_2,\dots,v_n$. 
We have 
\begin{multline}\label{eq:tau}
f_\gamma(v)=x^{\gamma(\rho)}(v)=(n-1)v_{\gamma(1)}+(n-2)v_{\gamma(2)}+\cdots+v_{\gamma(n-1)}\geq\\
(n-1)w_{\tau\gamma(1)}+(n-2)w_{\tau\gamma(2)}+\cdots+w_{\tau\gamma(n-1)}=x^{\tau\gamma(\rho)}(w).
\end{multline}
for all $\tau\in G$. By assumption, $f_\gamma(v)=f_\gamma(w)=
\max_{\tau\in G} x^{\tau\gamma(\rho)}(w)$. So (\ref{eq:tau}) is an equality for some $\tau\in G$.
We get $v_{\gamma(i)}=w_{\tau\gamma(i)}$ for all $i$. 
It follows that $v_i=w_{\tau(i)}$ for all $i$, and $v=\tau^{-1}(w)$. We conclude that $G\cdot v=G\cdot w$.
\end{proof}
\begin{proof}[Proof of Theorem~\ref{theo:3}]
The separating invariants found in Theorem~\ref{theo:SeparatingInvariants}
are of the form $\Tr(x^\alpha)=\sum_{\sigma\in G} x^{\sigma(\alpha)}$. As a function $\R^n\to \R$, $\Tr(x^\alpha)$ is equal to
$$
(v_1,v_2,\dots,v_n)\mapsto \max_{\sigma\in G} \langle v,\sigma(\alpha)\rangle.
$$
So the separating invariants form a  max-filter bank. 
It follows from \cite[Corollary 1.5]{balan2023g} that these separating invariants induce a  bi-Lipschitz embedding of the orbit space into Euclidean space. Theorem~\ref{theo:SeparatingInvariants} gives $n+n!/|G|$ separating invariants, namely, $e_1,e_2,\dots,e_n$ and for every right coset $G\sigma$ in $S_n$ we have an invariant $f_\sigma$.
\end{proof}

\section*{Acknowledgements}
The author was partially supported by NSF grant  DMS 2147769 and a Simons Fellowship.

\printbibliography
\end{document}